\renewcommand \a{\alpha}
\renewcommand \b{\beta}
\newcommand \K{\delta}
\newcommand \n{\nabla}
\newcommand \la{\lambda}
\newcommand \id{\operatorname{id}}
\newcommand \br{\mathbb{R}}
\newcommand \bc{\mathbb{C}}
\newcommand \Oc{\mathbb{O}}
\newcommand \Rn{\mathbb R^n}
\newcommand \Ro{\mathbb R^8}
\newcommand \Rs{\mathbb R^{16}}
\newcommand \Rd{\mathbb R^9}
\newcommand \rk{\operatorname{rk}}
\newcommand \Ker{\operatorname{Ker}}
\newcommand \End{\operatorname{End}}
\newcommand \Sk{\operatorname{Skew}}
\newcommand \Sym{\operatorname{Sym}}
\renewcommand \Im{\mathrm{Im}}
\newcommand \Cliff{\operatorname{Cliff}}
\newcommand \Cl{\operatorname{Cl}}
\newcommand \Span{\operatorname{Span}}
\newcommand \Tr{\operatorname{Tr}}
\newcommand \cJ{\mathcal{J}}
\newcommand \cI{\mathcal{I}}
\newcommand \cp{\mathcal{C}}
\newcommand \mU{\mathcal{U}}
\newcommand \mL{\mathcal{L}}
\newcommand \mA{\mathcal{A}}
\newcommand \mS{\mathcal{S}}
\newcommand \ve{\varepsilon}
\newcommand \OC{\Oc \otimes \bc}
\newcommand \<{\langle}
\renewcommand \>{\rangle}
\newcommand \JC{\mathcal{J}_{\bc}}
\newcommand \IC{\mathcal{I}_{\bc}}
\theoremstyle{plane}
\newtheorem{theorem}{Theorem}
\newtheorem*{theorem*}{Theorem}
\newtheorem*{conjecture*}{Conjecture}
\newtheorem*{corollary*}{Corollary}
\newtheorem{lemma}{Lemma}
\newtheorem*{namedtheorem}{\theoremname}
\newcommand{\theoremname}{te}
\newenvironment{named}[1]{\renewcommand{\theoremname}{#1}
\begin{namedtheorem}}{\end{namedtheorem}}
\theoremstyle{definition}
\newtheorem{definition}{Definition}
\theoremstyle{remark}
\newtheorem{remark}{Remark}
\begin{document}

\title[Osserman manifolds and Weyl-Schouten Theorem for rank-one symmetric spaces]
{Osserman manifolds and the Weyl-Schouten Theorem for rank-one symmetric spaces}

\author{Y.Nikolayevsky}
\address{Department of Mathematics and Statistics, La Trobe University, Victoria, 3086, Australia}
\email{y.nikolayevsky@latrobe.edu.au}

\date{\today}

\subjclass[2000]{Primary: 53B20} 
\keywords{Osserman manifold, Jacobi operator, Clifford structure}

\begin{abstract}
A Riemannian manifold is called Osserman (conformally Osserman, respectively), if the eigenvalues of the Jacobi
operator of its curvature tensor (Weyl tensor, respectively) are constant on the unit tangent sphere at every point.
Osserman Conjecture asserts that every Osserman manifold is either flat or rank-one symmetric.
We prove that both the Osserman Conjecture and its conformal version, the Conformal Osserman Conjecture, are true,
modulo a certain assumption on algebraic curvature tensors in $\Rs$. As a consequence, we show that a Riemannian
manifold having the same Weyl tensor as a rank-one symmetric space, is conformally equivalent to it.
\end{abstract}

\maketitle

\section{Introduction}
\label{s:intro}

The aim of this paper is twofold. Firstly, we consider Osserman and conformally Osserman manifolds of dimension $16$
(which is the only missing dimension in the proof of the Osserman Conjecture). We show that both the ``genuine"
Osserman
Conjecture and its conformal version can be reduced to a purely algebraic question on algebraic curvature tensors
in $\Rs$. Secondly, we obtain an analogue of the classical Weyl-Schouten Theorem for rank-one symmetric spaces: a
Riemannian manifold of dimension greater than four having ``the same" Weyl tensor as that of a rank-one
symmetric space is locally conformally equivalent to that space.

An \emph{algebraic curvature tensor} $\mathcal{R}$ on a Euclidean space $\Rn$ is a $(3, 1)$ tensor having the same
symmetries as the curvature tensor of a Riemannian manifold. For $X \in \Rn$, the \emph{Jacobi operator}
$\mathcal{R}_X : \Rn \to \Rn$ is defined by $\mathcal{R}_XY = \mathcal{R}(X, Y)X$ . The Jacobi operator is symmetric
and $\mathcal{R}_XX = 0$ for all $X \in \Rn$.

\begin{definition} \label{d:oact}
An algebraic curvature tensor $\mathcal{R}$ is called \emph{Osserman} if the eigenvalues of the Jacobi operator
$\mathcal{R}_X$ do not depend on the choice of a unit vector $X \in \Rn$.
\end{definition}

One of the algebraic curvature tensors naturally associated to a Riemannian manifold (apart from the curvature tensor
itself) is the Weyl conformal curvature tensor.

\begin{definition} \label{d:com}
A Riemannian manifold is called \emph{(pointwise) Osserman} if its curvature tensor at every point is Osserman. A
Riemannian manifold is called \emph{conformally Osserman} if its Weyl tensor at every point is Osserman.
\end{definition}

It is well-known (and is easy to check directly) that a Riemannian space locally isometric to a Euclidean space or to
a rank-one symmetric space is Osserman. The question of whether the converse is true is known as the Osserman
Conjecture \cite{O}:

\begin{named}{Osserman Conjecture}
Any smooth pointwise Osserman manifold of dimension $n \ne 2,4$ is either flat or locally rank-one symmetric.
\end{named}

The study of conformally Osserman manifolds was started in \cite{BG1} and then continued in
\cite{BG2,BGNSi,Gil,BGNSt}. Every Osserman manifold is conformally Osserman (which easily follows from the formula for
the Weyl tensor and the fact that every Osserman manifold is Einstein), as also is every manifold locally conformally
equivalent to an Osserman manifold. This motivates the following conjecture made in \cite{BGNSi}: 

\begin{named}{Conformal Osserman Conjecture}
Any smooth conformally Osserman manifold of dimension $n > 4$ is either conformally flat or locally conformally
equivalent to a rank-one symmetric space.
\end{named}

The proof of the Osserman Conjecture for manifolds of dimension not divisible by $4$ was given in \cite{Chi}, before
the conjecture itself was published. The Conformal Osserman Conjecture for manifolds of dimension $n > 6$, not
divisible by $4$, is proved in \cite{BG1}, 
for manifolds with the structure of a warped product, both conjectures are proved in \cite{BGV}.

At present, both the Osserman Conjecture and the Conformal Osserman Conjecture are proved in all the cases, with the
only exception when $n=16$ and one of the eigenvalues of the Jacobi operator
has multiplicity $7$ or $8$ \cite{Nhjm,Nmm,Nma,Nbel,Nco}.
The main difficulty in this remaining case lies in the following algebraic fact: it can
be shown that in all the other cases, an Osserman algebraic curvature tensor has a
\emph{Clifford structure}, so it ``looks similar" to the curvature tensor of the complex or the quaternionic projective
space (a Clifford structure arises from an orthogonal representation of a Clifford algebra; see
Section~\ref{s:clifford} for details). However, the curvature
tensor of the Cayley projective plane (whose Jacobi operator has eigenvalues with multiplicities exactly $7$ and $8$)
is essentially different. This is the only known Osserman curvature tensor without a Clifford structure, which
motivates the following algebraic conjecture.

\begin{named}{Conjecture~A}
Every Osserman algebraic curvature tensor in $\Rs$ whose Jacobi operator has an eigenvalue of
multiplicity $7$ or of multiplicity $8$ either has a Clifford structure or is a linear combination of the constant
curvature tensor and the curvature tensor of the Cayley projective plane.
\end{named}

In the latter case, we will say that an algebraic curvature tensor has a \emph{Cayley structure}.

Our main result is the following theorem.
\begin{theorem} \label{t:co}
Assuming Conjecture A, both the Osserman Conjecture and the Conformal Osserman Conjecture are true.
\end{theorem}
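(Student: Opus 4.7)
The plan is to combine Conjecture~A with the partial results already in the literature to dispose of the only remaining case. By \cite{Chi,BG1,BGV,Nhjm,Nmm,Nma,Nbel,Nco}, both the Osserman Conjecture and the Conformal Osserman Conjecture are established in all dimensions and multiplicity configurations except when $n = 16$ and the Jacobi operator of the tensor of interest (the curvature tensor in the Osserman case, the Weyl tensor in the conformal case) admits an eigenvalue of multiplicity $7$ or $8$. It therefore suffices to analyse a $16$-dimensional (conformally) Osserman manifold $M$ for which the relevant tensor $\mathcal{T}$ falls into this multiplicity bracket at every point.

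At each $p \in M$, Conjecture~A applied to $\mathcal{T}_p$ yields a dichotomy: either $\mathcal{T}_p$ has a Clifford structure, or $\mathcal{T}_p$ has a Cayley structure. The first step will be to show that the subsets $U_{\mathrm{Cliff}}$ and $U_{\mathrm{Cay}}$ on which these alternatives hold decompose $M$, away from a nowhere dense set, into open pieces, so that the subsequent analysis can be carried out locally under a single structural assumption. On $U_{\mathrm{Cliff}}$, one follows the template of the other multiplicity cases treated in \cite{Nhjm,Nmm,Nma,Nbel,Nco}: from the pointwise Clifford structure one assembles a local Clifford bundle, uses the second Bianchi identity to show that, in an appropriate gauge, this bundle is parallel, and concludes that $M$ is locally rank-one symmetric (respectively, locally conformally equivalent to such a space in the conformal version).

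On $U_{\mathrm{Cay}}$ the tensor takes the form $\mathcal{T}_p = a(p)\mathcal{R}_0 + b(p)\mathcal{R}_{\Oc P^2}$ in a local $\mathrm{Spin}(9)$-adapted frame, and the goal is to extract from the second Bianchi identity (and, for the conformal version, from the conformal transformation law for the Weyl tensor) enough information to force $\nabla \mathcal{T} = 0$ after a suitable choice of frame. This will make the functions $a, b$ locally constant and identify $M$ with $\Oc P^2$ or with its noncompact dual $\Oc H^2$ in the Osserman case, and with a manifold locally conformally equivalent to one of them in the conformal case. For the Conformal Osserman Conjecture, an intermediate conformal change of metric will be performed in order to reduce the analysis to an Einstein setting in which the Weyl tensor is proportional to the component of the full curvature carrying the Cayley structure.

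The principal obstacle lies in the Cayley case. Whereas the Clifford analysis exploits the flexibility of Clifford modules and their moduli, the tensor $\mathcal{R}_{\Oc P^2}$ carries a very rigid $\mathrm{Spin}(9)$-invariant structure, so the associated local Cayley frame is essentially unique. Showing that the second Bianchi identity, together with this rigidity, forces the Cayley frame to be parallel and the coefficients $a, b$ to be constant --- and, in the conformal setting, producing the conformal factor realising $M$ as locally conformally equivalent to $\Oc P^2$ or $\Oc H^2$ --- is the most technically demanding part of the approach.
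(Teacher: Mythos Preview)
Your outline captures the broad local strategy of the paper: reduce to $n=16$, invoke Conjecture~A to split into Clifford and Cayley cases on an open dense subset, establish smoothness of the respective structures there, and use the second Bianchi identity in each case to force local symmetry after a conformal change. The paper executes exactly this in Lemma~\ref{l:locc1} and Sections~\ref{ss:cc}--\ref{ss:oc}.

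However, there is a genuine gap: you have no global argument. Your plan, once carried out, would only show that on an open dense subset $M' \subset M^{16}$ each point has a neighborhood locally conformal to \emph{some} rank-one symmetric space or to flat space. It does not exclude the possibility that $M^{16}$ contains an open piece locally conformal to, say, $\Oc P^2$ whose boundary lies in the closed set $\{W=0\}$, or that different components of $M'$ are modeled on different symmetric spaces. The paper devotes Section~\ref{ss:glo} to this. The key step is an elliptic argument: if a component $M_\alpha$ locally conformal to a nonflat model had a boundary point $x_0$ with $W(x_0)=0$, one shows (Lemma~\ref{l:x_0}) that the conformal factor $f$ and $\nabla f$ extend continuously to $x_0$ with $f(x_0)=0$; then $u=f^{7/2}$ satisfies $\Delta u = Fu$ on a small geodesic ball with $u>0$ inside and $u(x_0)=0$, and the boundary point (Hopf) theorem forces the inner normal derivative of $u$ at $x_0$ to be positive, contradicting $\nabla f \to 0$. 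This step is essential and is entirely absent from your proposal.

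A secondary organizational difference: the paper does not run the Osserman and Conformal Osserman cases in parallel. It proves the Conformal Osserman Conjecture first, then observes that an Osserman manifold is Einstein, hence conformally Osserman, hence locally conformal to a rank-one symmetric space by the conformal result; it then invokes \cite{Nic} to get $\nabla W=0$, whence local symmetry and the conclusion via \cite{GSV}. Your parallel treatment is not incorrect in principle, but it duplicates effort and, in the Osserman case, still leaves you needing the global argument above (or an analogue of it) to pass from the open dense set $M'$ to all of $M$.
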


As a consequence of Theorem~\ref{t:co}, we obtain the following analogue of the Weyl-Schouten Theorem for
rank-one symmetric spaces (without assuming Conjecture~A):

\begin{theorem}\label{t:ws}
Suppose that for every point $x$ of a  smooth Riemannian manifold $M^n, \; n > 4$, there exists a linear isometry
which maps the Weyl tensor of $M^n$ at $x$ on a positive multiple of the Weyl tensor of a rank-one symmetric
space $M_0^n$. Then $M^n$ is locally conformally equivalent to $M_0^n$.
\end{theorem}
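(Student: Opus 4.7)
The plan is to deduce Theorem~\ref{t:ws} from Theorem~\ref{t:co}, observing that the pointwise hypothesis bypasses Conjecture~A entirely. Every rank-one symmetric space is Osserman and Einstein, so its Weyl tensor is itself an Osserman algebraic curvature tensor, with the same eigenvalue multiplicities as its Jacobi operator (up to a global shift). The hypothesis of Theorem~\ref{t:ws} therefore says that the Weyl tensor of $M^n$ is, at every point, a positive multiple of a fixed Osserman tensor of prescribed algebraic type; in particular, $M^n$ is conformally Osserman with the same eigenvalue multiplicities as $M_0^n$.

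I would then split the argument according to the isometry type of $M_0^n$. If $M_0^n$ has constant curvature, its Weyl tensor vanishes, so by hypothesis the Weyl tensor of $M^n$ is identically zero; since $n > 4$, the classical Weyl--Schouten Theorem yields that $M^n$ is locally conformally flat, hence locally conformally equivalent to $M_0^n$. If $M_0^n$ is $\mathbb{CP}^m$, $\mathbb{HP}^m$, or one of their non-compact duals, then its Weyl tensor has a Clifford structure at every point, and by hypothesis so does the Weyl tensor of $M^n$; the Clifford branch of the proof of Theorem~\ref{t:co}, which does not invoke Conjecture~A, then applies. If $M_0^n$ is the Cayley projective plane or its dual (so $n = 16$ and the Jacobi eigenvalue multiplicities are $7$ and $8$), then by hypothesis the Weyl tensor of $M^{16}$ has a Cayley structure at every point; since the conclusion of Conjecture~A is in force \emph{by assumption}, the Cayley branch of the proof of Theorem~\ref{t:co} applies unchanged.

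The core of the argument in each non-flat case is to promote the pointwise algebraic structure of the Weyl tensor to a smooth global one, and then to produce a conformal factor $e^{2f}$ such that the curvature tensor of the rescaled metric agrees with that of $M_0^n$; a Cartan--Ambrose-type argument then yields local conformal equivalence. The main obstacle is the coherence and smoothness of the algebraic data across $M^n$: one needs the Clifford or Cayley system picked out by the Weyl tensor to vary smoothly, and the second Bianchi identity together with the integrability condition for the conformal change must force the rescaled metric to be locally symmetric of the required type. In the Cayley case this step is delicate, but crucially all of the algebraic input required is furnished by the hypothesis of Theorem~\ref{t:ws}, so no independent resolution of Conjecture~A is needed.
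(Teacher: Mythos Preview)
Your proposal is correct and follows essentially the same approach as the paper: both observe that the hypothesis of Theorem~\ref{t:ws} supplies, at every point, precisely the algebraic structure (Clifford or Cayley) that Conjecture~A would otherwise be needed to produce, so the proof of Theorem~\ref{t:co} runs unconditionally. The paper compresses this into a single paragraph and adds one small closing step you leave implicit in your case split: once $M^n$ is known to be locally conformal to \emph{some} rank-one symmetric space, one must still check it is $M_0^n$, which follows because the Jacobi operators of the Weyl tensors of distinct rank-one symmetric spaces have distinct eigenvalue multiplicities.
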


Manifolds satisfying the assumption of Theorem~\ref{t:ws} can be viewed as conformal analogues of (a subclass of)
curvature homogeneous manifolds \cite{TV,Gil}. 

In dimension four, both theorems are false. For Theorem~\ref{t:ws} and the conformal part
of Theorem~\ref{t:co}, this follows from the fact that a four-dimensional Riemannian manifold is conformally Osserman
if and only if it is either self-dual or anti-self-dual \cite{BG2} and that there exist self-dual K\"{a}hler metrics
on $\bc^2$ which are not locally conformally equivalent to locally-symmetric ones \cite{Der}. For the ``genuine"
Osserman part of Theorem~\ref{t:co}, the counterexample is given by the generalized complex space forms
\cite[Corollary 2.7]{GSV}.
However, the Osserman Conjecture is true for four-dimensional \emph{globally} Osserman manifolds, that is, for those
whose Jacobi operator has constant eigenvalues on the whole unit tangent bundle \cite{Chi}.

\smallskip

The paper is organized as follows. Section~\ref{s:clifford} gives the algebraic background for the proof of the both 
theorems: we consider Osserman algebraic curvature tensors on $\Rs$, each of which, assuming Conjecture~A, has either 
a Clifford structure (discussed in Sections~\ref{ss:clifford},\ref{ss:cs16}), or a Cayley structure 
(Section~\ref{ss:actcayley}). The proofs of the both theorems are given in Section~\ref{s:conf}. We first prove the 
local version of the conformal part of Theorem~\ref{t:co} using the second Bianchi identity, separately in the
Clifford case (Section~\ref{ss:cc}) and in the Cayley case (Section~\ref{ss:oc}), and then the global version, by
showing that the ``algebraic type" of the Weyl tensor is the same at all the points of a connected conformally 
Osserman Riemannian manifold (Section~\ref{ss:glo}). The proofs of Theorem~\ref{t:ws} and of the ``genuine" Osserman 
part of Theorem~\ref{t:co} easily follow (see the second and the last paragraphs of Section~\ref{s:conf}, 
respectively).

The Riemannian manifold $M^n$ is assumed to be smooth (of class $C^\infty$), although both theorems remain valid for
class $C^k$, with sufficiently large $k$.

\section{Osserman algebraic curvature tensors in $\Rs$ and the Clifford structure}
\label{s:clifford}

Both Theorem~\ref{t:co} and Theorem~\ref{t:ws} have to be proved only when $n=16$ (see Section~\ref{s:intro}).
In this section, we consider all the known Osserman algebraic curvature tensors in $\Rs$, namely, the algebraic 
curvature tensors with a Clifford structure and the algebraic curvature tensors with a Cayley structure.

\subsection{Clifford structure}
\label{ss:clifford}

The property of an algebraic curvature tensor $\mathcal{R}$ to be Osserman is quite algebraically restrictive. In the
most cases, such a tensor can be obtained by the following construction, suggested in \cite{GSV}, which
generalizes the curvature tensors of the complex and the quaternionic projective spaces.

\begin{definition}\label{d:cl}
A \emph{Clifford structure} $\Cliff(\nu ; J_1, \dots, J_\nu; \la_0, \eta_1, \dots , \eta_\nu)$ on a
Euclidean space $\Rn$ is a set of $\nu \ge 0$ anticommuting almost Hermitian structures $J_i$ and $\nu+1$ real numbers
$\la_0, \eta_1, \dots \eta_\nu$, with $\eta_i \ne 0$. An algebraic curvature tensor $\mathcal{R}$ on $\Rn$ \emph{has a
Clifford structure} $\Cliff(\nu ; J_1, \dots, J_\nu; \la_0, \eta_1, \dots , \eta_\nu)$ if
\begin{equation}\label{eq:cs}
\mathcal{R}(X, Y) Z = \la_0 (\< X, Z \> Y - \< Y, Z \> X) \\
+ \sum\nolimits_{i=1}^\nu \eta_i (2 \< J_iX, Y \> J_iZ + \< J_iZ, Y \> J_iX - \< J_iZ, X \> J_iY).
\end{equation}
When this does not create ambiguity, we abbreviate
$\Cliff(\nu ; J_1, \dots, J_\nu; \la_0, \eta_1, \dots , \eta_\nu)$ to just $\Cliff(\nu)$.
\end{definition}

\begin{remark}\label{rem:climpliesos}
As it follows from Definition~\ref{d:cl}, the operators $J_i$ are skew-symmetric, orthogonal and satisfy the equations
$\<J_iX, J_jX\> = \K_{ij}\|X\|^2$ and $J_iJ_j+J_jJ_i =-2 \delta_{ij} \id$, for all $i,j =1, \dots, \nu$, and all
$X \in \Rn$. This implies that every algebraic curvature tensor with a Clifford structure is Osserman, as
by \eqref{eq:cs} the Jacobi operator has the form
$\mathcal{R}_XY =  \la_0 (\| X\|^2 Y - \< Y, X \> X) + \sum\nolimits_{i=1}^\nu 3 \eta_i \< J_iX, Y \> J_iX$, so
for a unit vector $X$, the eigenvalues of $\mathcal{R}_X$ are $\la_0$ (of multiplicity $n-1-\nu$ provided $\nu<n-1$),
$0$ and $\la_0+3\eta_i, \; i=1, \dots, \nu$.

From the fact that the $J_i$'s are anticommuting almost Hermitian structures it easily follows that the operators
$J_{i_1} \dots J_{i_m}$ with pairwise nonequal $i_j$'s, are skew-symmetric, if $m \equiv 1,2 \mod(4)$, and are
symmetric otherwise.
\end{remark}

It turns out that every Osserman algebraic curvature tensor has a Clifford structure in all the dimensions
except for $n=16$, and also in many cases when $n=16$, as follows from \cite{Nma} (Proposition~1 and the second last
paragraph of the proof of Theorem~1 and Theorem~2), \cite[Proposition~1]{Nmm} and \cite[Proposition~2.1]{Nbel}.
The only known counterexample is an algebraic curvature tensor with a Cayley structure:
$\mathcal{R}= a R^{\Oc}+b R^{\mathbb{S}}$, where $R^{\mathbb{S}}$ and $R^{\Oc}$ are the curvature tensors of the unit
sphere $\mathbb{S}^{16}(1)$ and of the Cayley projective plane $\Oc P^2$, respectively, and $a \ne 0$.

A Clifford structure $\Cliff(\nu)$ on the Euclidean space $\Rn$ turns it the into a Clifford module (we refer to
\cite[Part~1]{ABS}, \cite[Chapter~11]{Hus} for standard facts on Clifford algebras and
Clifford modules). Denote $\Cl(\nu)$ a \emph{Clifford algebra} on $\nu$ generators  $x_1, \dots, x_\nu$, an
associative unital algebra over $\br$ defined by the relations $x_ix_j+x_jx_i=-2 \K_{ij}$ (this condition determines
$\Cl(\nu)$ uniquely). The map $\rho: \Cl(\nu) \to \Rn$ defined on generators by $\rho(x_i) = J_i$ (and
$\rho(1) = \id$) is a representation of $\Cl(\nu)$ on $\Rn$. As all the $J_i$'s are orthogonal and skew-symmetric,
$\rho$ gives rise to an \emph{orthogonal multiplication} defined as follows. In the Euclidean space $\br^{\nu}$,
fix an orthonormal basis $e_1, \dots, e_\nu$. For every $u = \sum_{i=1}^{\nu} u_i e_i \in \br^{\nu}$ and every
$X \in \Rn$, define
\begin{equation}\label{eq:ortmult}
    J_uX = \sum\nolimits_{i=1}^{\nu} u_i J_iX
\end{equation}
(when $u=e_i$, we abbreviate $J_{e_i}$ to $J_i$). The map $J: \br^{\nu} \times \Rn \to \Rn$ defined by
\eqref{eq:ortmult} is an orthogonal multiplication: $\|J_uX\|^2=\|u\|^2\|X\|^2$ (similarly, we can define
an orthogonal multiplication $J: \br^{\nu+1} \times \Rn \to \Rn$ by
$J_uX = u_0 X + \sum\nolimits_{i=1}^{\nu} u_i J_iX$, for $u = \sum_{i=0}^{\nu} u_i e_i \in \br^{\nu+1}$, where
$e_0, e_1, \dots, e_\nu$ is an orthonormal basis for the Euclidean space $\br^{\nu+1}$).

For $X \in \Rn$, introduce the subspaces
\begin{equation*}
    \cJ X = \Span(J_1X, \dots, J_\nu X), \qquad \cI X = \Span(X, J_1X, \dots, J_\nu X).
\end{equation*}
Later we will also use the complexified versions of these subspaces, which we denote
$\JC X$ and ${\mathcal{I}_{\bc}} X$ respectively, for $X \in \bc^n$.

\subsection{Algebraic curvature tensors of dimension $\mathbf{16}$ with a Clifford structure}
\label{ss:cs16}

To find all the algebraic curvature tensors with a Clifford structure in dimension $16$, we need to find all
the possible ways of turning $\Rs$ into a $\Cl(\nu)$-module. A convenient way to describe them is by using the
octonions.

In general, the proof of Theorem~\ref{t:co} extensively uses computations in the octonion algebra $\Oc$
(in particular, the standard identities like $a^*=2 \<a,1\> 1 - a, \; \<a,b\>=\<a^*, b^*\>=\frac12(a^*b+b^*a), \;
a(ab)=a^2b, \; \<a, bc\> = \<b^*a, c\> = \<ac^*, b\>, \; (ab^*)c+(ac^*)b = 2\<b, c\>a, \;
\<ab,ac\>=\<ba,ca\>=\|a\|^2 \<b,c\>$, for any $a, b, c \in \Oc$, and similar ones, see e.g. \cite{BG}) and
the fact that $\Oc$ is a division algebra (in particular, any nonzero octonion is invertible:
$a^{-1} = \|a\|^{-2} a^*$). We will also use the \emph{bioctonions} $\Oc \otimes \bc$, the algebra over $\bc$ with
the same multiplication table as that for $\Oc$. As all the above identities are polynomial, they still hold for
bioctonions, with the complex inner product on $\bc^8$, the underlying linear space of $\Oc \otimes \bc$. However,
the bioctonion algebra is not a division algebra (and has zero-divisors: $(\mathrm{i}1+e_1)(\mathrm{i}1-e_1)=0$).

In the following lemma,  (which contains known facts, but will be convenient for us to refer to)
we call a representation $\rho$ of $\Cl(\nu)$ in $\Rn$
\emph{orthogonal}, if all the $\rho(x_i)$ are orthogonal. Representation $\rho_1, \rho_2$ are called equivalent
(respectively, orthogonally equivalent), if there exists $T \in \operatorname{GL}(n)$ (respectively,
$T \in \operatorname{O}(n)$)
such that $\rho_2(x)=T\rho_1(x)T^{-1}$, for all $x \in \Cl(\nu)$. For a representation $\rho$, the representation
$-\rho$ is defined on the generators of $\Cl(\nu)$ by $(-\rho)(x_i)=-\rho(x_i)$
(induced by the automorphism $\a:x_i \to -x_i$ of $\Cl(\nu)$).

\begin{lemma}\label{l:reps}
1. For any representation of a Clifford algebra $\Cl(\nu)$ in $\Rs, \nu \le 8$. There is exactly one, up to orthogonal
equivalence, orthogonal representations $\rho_8$ of $\Cl(8)$ in $\Rs$. It can be defined as follows.
Identify $\Rs$ and $\Ro$ with $\Oc \oplus^\perp \Oc$ and $\Oc$ respectively, via linear isometries. Then the
orthogonal multiplication \eqref{eq:ortmult} defined by $\rho_8$ is given by
    \begin{equation} \label{eq:Jp}
    J_p (a , b) = (b p, - a p^*), \quad \text{for} \quad p \in \Ro=\Oc, \; X=(a , b) \in \Rs=\Oc \oplus^\perp \Oc.
    \end{equation}

2. Any orthogonal representation of a Clifford algebra $\Cl(\nu), \; \nu \ge 4$, in $\Rs$ is either a restriction of
the representation $\rho_8$ to $\Cl(\nu) \subset \Cl(8)$, or, up to orthogonal equivalence, is $\pm \rho_7$, where
$\rho_7$ is  the following reducible orthogonal representation of $\Cl(7)$. Identify $\Rs$ and $\mathbb{R}^7$ with
$\Oc \oplus^\perp \Oc$ and $\Oc'=1^\perp$ respectively, via linear isometries. Then the orthogonal multiplication
\eqref{eq:ortmult} defined by $\rho_7$ is given by
\begin{equation}\label{eq:Jp7}
    J_p(a , b)=(ap, bp), \quad \text{for} \quad p \in \mathbb{R}^7=\Oc', \; X=(a , b) \in \Rs=\Oc \oplus^\perp \Oc.
\end{equation}
\end{lemma}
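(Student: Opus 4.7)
The plan is to reduce both parts to the Atiyah--Bott--Shapiro classification of Clifford modules together with direct verification of the octonionic formulae \eqref{eq:Jp}, \eqref{eq:Jp7}. The periodicity table $\Cl(6)\cong\mathbb{R}(8)$, $\Cl(7)\cong\mathbb{R}(8)\oplus\mathbb{R}(8)$, $\Cl(8)\cong\mathbb{R}(16)$, $\Cl(9)\cong\mathbb{C}(16)$ yields at once the bound $\nu\le 8$ in part~1 (every irreducible real $\Cl(9)$-module has real dimension $32$), together with the fact that $\Cl(8)$ has a unique irreducible real module of dimension $16$, while $\Cl(7)$ has exactly two non-isomorphic irreducible modules $\rho_7^\pm$ of dimension $8$, interchanged by the parity automorphism $\a$.

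Next I would verify from the octonionic identity $(ab^*)c+(ac^*)b=2\<b,c\>a$ (applied componentwise on $(a,b)\in\Oc\oplus\Oc$) that \eqref{eq:Jp} satisfies $J_pJ_q+J_qJ_p=-2\<p,q\>\id$, so $\rho_8$ is an orthogonal representation of $\Cl(8)$; by dimension it realises the unique irreducible module. The same octonionic identities applied to \eqref{eq:Jp7} show that $\rho_7$ is an orthogonal representation of $\Cl(7)$. Since each $J_{e_i}$ acts diagonally on $\Oc\oplus\Oc$ as right multiplication by $e_i\in\Oc'$, and right multiplication by $\Oc'$ is known to realise one of the two $8$-dimensional irreducible real $\Cl(7)$-modules, one has $\rho_7\cong 2\rho_7^+$, and consequently $-\rho_7\cong 2\rho_7^-$.

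To finish part~2, I would enumerate all orthogonal $\Cl(\nu)$-modules on $\Rs$ for $\nu\ge 4$ via their decomposition into irreducibles of real dimension $8$ (if $\nu\le 7$) or $16$ (if $\nu=8$). For $\nu=8$ only $\rho_8$ arises. For $\nu=7$, the three options $\rho_7^+\oplus\rho_7^+$, $\rho_7^-\oplus\rho_7^-$, $\rho_7^+\oplus\rho_7^-$ correspond to $\rho_7$, $-\rho_7$, and $\rho_8|_{\Cl(7)}$ respectively; the last identification uses that the central volume element $J_1\cdots J_8$ acts with opposite signs on the two summands of $\rho_8|_{\Cl(7)}$, forcing both inequivalent irreducibles to occur. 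For $\nu\in\{4,5,6\}$ the irreducible is unique, so the module is its double, realised simultaneously as $\rho_8|_{\Cl(\nu)}$ and as $\rho_7|_{\Cl(\nu)}$.

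The step I expect to be the main obstacle is upgrading the abstract equivalence of representations supplied by the classification to the orthogonal equivalence demanded by the statement. The standard device is polar decomposition: given an intertwiner $T$ of two orthogonal representations, $T^{\top}T$ is a positive self-intertwiner and therefore lies in the commutant algebra --- one of $\mathbb{R},\mathbb{C},\mathbb{H}$ in the irreducible cases, or a $2\times 2$ matrix algebra over one of these in the multiplicity-two cases --- where a positive square root exists, so $T$ factors through an orthogonal intertwiner. This reduction has to be carried out in each of the cases above, paying attention in the $\nu=7$ case to the $\pm$ ambiguity coming from the automorphism $\a$.
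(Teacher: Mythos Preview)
Your approach is essentially the same as the paper's: both invoke the Atiyah--Bott--Shapiro classification for the module count, verify \eqref{eq:Jp} and \eqref{eq:Jp7} via the octonion identity $(ab^*)c+(ac^*)b=2\<b,c\>a$, use polar decomposition to pass from equivalence to orthogonal equivalence, and distinguish $\pm\rho_7$ from $\rho_8|_{\Cl(7)}$ by a volume-element argument.

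Two small remarks. First, the polar-decomposition step is simpler than you fear: you need not analyse the commutant case by case. If $T$ intertwines two orthogonal representations, then $T^tT$ commutes with every $J_i$; the unique positive square root $S=\sqrt{T^tT}$ is a polynomial (or norm-limit of polynomials) in $T^tT$, so it too commutes with every $J_i$, and $TS^{-1}$ is the required orthogonal intertwiner. This works uniformly, irrespective of multiplicities or the shape of the commutant. Second, the element you want in the $\nu=7$ case is the $\Cl(7)$ volume element $J_1\cdots J_7$, not $J_1\cdots J_8$: under $\rho_8|_{\Cl(7)}$ this is symmetric, orthogonal, and cannot equal $\pm\id$ (else $J_1\cdots J_8=\pm J_8$ would be skew-symmetric, contradicting Remark~\ref{rem:climpliesos}), so both $\pm1$-eigenspaces occur and $\rho_8|_{\Cl(7)}\cong\rho_7^+\oplus\rho_7^-$ as you claim.
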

\begin{proof}
It is easy to see that if two orthogonal representations are equivalent, then they are orthogonally equivalent.
Indeed, suppose that $\rho_1(x_i)=J_i$ and $\rho_2(x_i)=TJ_iT^{-1}$, where $T \in \operatorname{GL}(n)$. Then, as both
$\rho_1$ and $\rho_2$ are orthogonal, we get $(T^tT)J_i=J_i(T^tT)$. As every $J_i$ commutes with $T^tT$, it also
commutes with  $S=\sqrt{T^tT}$, the unique symmetric positive definite matrix such that $S^2=T^tT$. But then
$TS^{-1} \in \operatorname{O}(n)$ and $\rho_2(x_i)=(TS^{-1})J_i(TS^{-1})^{-1}$.

By \cite[Table~6.5]{Hus}, $N_5=N_6=N_8= \mathbb{Z}, \; N_7=\mathbb{Z} \oplus \mathbb{Z}$, where $N_\nu$ is the free
abelian group generated by irreducible representations of $\Cl(\nu)$.

1. The fact that $\nu \le 8$ follows from \cite[Theorem~11.8.2]{Hus}. The orthogonal multiplication defined by
\eqref{eq:Jp} is indeed a representation of $\Cl(8)$ in $\Rs$ (which follows from the octonion identity
$(ap^*)q+(aq^*)p=2\<p,q\>a$). As $N_8=\mathbb{Z}$, a representation of $\Cl(8)$ in $\Rs$ is unique, up to orthogonal
equivalence, hence any orthogonal representation is orthogonally equivalent to the one defined by \eqref{eq:Jp}.

2. The restriction of $\rho_8$ to any $\Cl(\nu) \subset \Cl(8)$ defines an orthogonal representation of $\Cl(\nu)$
in $\Rs$. As $N_5=N_6=\mathbb{Z}$, any orthogonal representation of $\Cl(\nu), \; \nu=5, 6$, in $\Rs$ is equivalent
(hence orthogonally equivalent) to it.

For the algebra $\Cl(7)$, the group $N_7=\mathbb{Z}\oplus\mathbb{Z}$ is generated by two inequivalent representations
in $\Ro$. These are $\pm \sigma$, where on generators, $\sigma(x_i)$ is the right multiplication in $\Oc$ by the
imaginary octonion $x_i$ (note that $\prod_{i=1}^7\sigma(x_i)=\pm \id$, with $\pm$ replaced by $\mp$ for
$-\sigma$). Then there are exactly three (orthogonally) inequivalent (orthogonal) representations of $\Cl(7)$ in
$\Rs$: $\pm 2\sigma$ and $(-\sigma)\oplus\sigma$. As it follows from \eqref{eq:Jp7}, $\rho_7=2\sigma$. Moreover,
neither of $\pm \rho_7$ can be a restriction of $\rho_8$ to $\Cl(7)$, as $\prod_{i=1}^7\rho_8(x_i)=\pm \id$,
so $\prod_{i=1}^8\rho_7(x_i)=\pm\rho_7(x_8)$, which is skew-symmetric, thus contradicting Remark~\ref{rem:climpliesos}.
\end{proof}

Note that an algebraic curvature tensor with a Clifford structure does not change, if we change the signs of
(some or all of) the $J_i$'s, so it does not matter, which of $\pm \rho_7$ is defined by \eqref{eq:Jp7}.

In the proof of Theorem~\ref{t:co} we will use the following Lemma.

\begin{lemma}\label{l:octlemma}

1. Suppose that a Clifford structure on $\Rs$ is given by \eqref{eq:Jp}.
Let $N: \Rs \to \Rs$ be a quadratic form such that for all $i = 1, \ldots 8$, the cubic polynomial $\<N(X), J_iX\>$
is divisible by $\|X\|^2$. Then there exist a linear operator $A: \Rs \to \Ro$ and vectors $V, U \in \Rs$ such that
$N(X) = J_{A(X)}X + \<V, X\> X - U \|X\|^2$.

2. Suppose that a Clifford structure on $\Rs$ is given by \eqref{eq:Jp7}.
Let $N=(N_1,N_2): \Rs \to \Rs$ be a quadratic form, $u$ be a unit imaginary octonion, and $p \in \Rs$ be such that for
all $X=(a,b) \in \Rs$, $a^* N_1(X) + b^* N_2(X)=\<p,X\> \<a^*b,u\>$. Then there exists $m \in \Rs$ such that
$N(X)=\|X\|^2(m- \pi_{\cI X}m)$ $=\|X\|^2 m - \<m,X\>X - \sum_{i=1}^7\<J_iX,m\>J_iX$, where $\pi_{\cI X}m$ is the
orthogonal projection to $\cI X$.
\end{lemma}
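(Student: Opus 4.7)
The natural approach for both parts is to work in octonion coordinates $\Rs = \Oc \oplus \Oc$ and exploit the octonion identities listed just before the lemma, together with the irreducibility of $\|X\|^2$ over $\bc$.

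For Part 1, I would start by using divisibility and the irreducibility of $\|X\|^2$ to produce unique linear forms $\alpha_i$ with $\<N(X), J_iX\> = \alpha_i(X)\|X\|^2$. Assembling these into a candidate linear map $A_0 : \Rs \to \Ro$ by $A_0(X) = \sum_i \alpha_i(X) e_i$, the residual quadratic form $\tilde N(X) := N(X) - J_{A_0(X)}X$ satisfies $\tilde N(X) \perp \cJ X$ for every $X$. Writing $\tilde N = (c,d)$ and using \eqref{eq:Jp}, the orthogonality $\tilde N(X) \perp J_iX$ for all $i$ is equivalent to the octonion polynomial identity $b^* c(X) = d(X)^* a$, with $X=(a,b)$. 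I would then decompose $c$ and $d$ by bidegree in $(a,b)$: matching the pure $(3,0)$ and $(0,3)$ pieces forces the pure-$a$ quadratic part of $d$ and the pure-$b$ quadratic part of $c$ to vanish, since $\Oc$ is a division algebra. The remaining bilinear equations in bidegrees $(2,1)$ and $(1,2)$ can be solved using the alternativity identity $a^*(ax) = \|a\|^2 x$ and the polarized Moufang-type identity $(ab^*)c + (ac^*)b = 2\<b,c\>a$ from the list preceding the lemma; this should classify every such $\tilde N$ as $\tilde N(X) = \<V,X\>X - \|X\|^2\,\pi_{(\cJ X)^\perp}U$ for some vectors $V, U \in \Rs$. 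Expanding $\|X\|^2\,\pi_{(\cJ X)^\perp}U = U\|X\|^2 - \sum_i \<U, J_iX\>J_iX$ and absorbing the resulting $J$-term into $A_0$ yields the stated form $N(X) = J_{A(X)}X + \<V,X\>X - U\|X\|^2$.

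For Part 2, the key observation is that the octonion-valued hypothesis $a^* N_1 + b^* N_2 = \<p,X\>\<a^*b,u\>$ decouples into its scalar and imaginary parts. Taking the scalar part of both sides yields $\<N(X), X\> = \<p,X\>\<a^*b,u\>$ (using $\<a^*N_1,1\>=\<N_1,a\>$), while projecting onto the imaginary basis $e_\mu$ of $\Oc'$ and applying \eqref{eq:Jp7} gives $\<N(X), J_\mu X\> = 0$ for $\mu = 1, \dots, 7$. The second set of equations says $N(X) \perp \cJ X$ for all $X$; the analog of the classification step of Part 1 adapted to the seven anticommuting operators of \eqref{eq:Jp7} then forces $N(X) = \<V,X\>X - \|X\|^2\,\pi_{(\cJ X)^\perp}U$ for some $V, U \in \Rs$. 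Substituting this into the first equation and using $\<\pi_{(\cJ X)^\perp}U, X\> = \<U, X\>$ (because $X \perp \cJ X$), we obtain $\|X\|^2\,\<V-U, X\> = \<p,X\>\<a^*b,u\>$. Since $\|X\|^2$ is irreducible in $\bc[X]$ and the bidegree-$(1,1)$ form $\<a^*b,u\>$ is not proportional to $\|X\|^2$ (whose bidegrees are $(2,0)$ and $(0,2)$), the right-hand side is divisible by $\|X\|^2$ only if $\<p,X\>\equiv 0$, i.e., $p = 0$. Hence $V = U$, and setting $m = -V$ and re-expanding via $\pi_{\cI X}m = \|X\|^{-2}\<m,X\>X + \pi_{\cJ X}m$ produces the required identity $N(X) = \|X\|^2(m - \pi_{\cI X}m)$.

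The main obstacle in both parts is the classification step: showing that every quadratic map $\tilde N : \Rs \to \Rs$ with $\tilde N(X) \perp \cJ X$ for all $X$ (for the given Clifford structure) must have the form $\<V,X\>X - \|X\|^2\,\pi_{(\cJ X)^\perp}U$. This is a representation-theoretic rigidity statement that, translated into octonion coordinates, becomes a rather delicate analysis of polynomial identities in the non-associative algebra $\Oc$; the bulk of the technical work, in particular the careful book-keeping with alternativity and the Moufang-type identities, lives there. Once this classification is in hand, the irreducibility argument above finishes Part 2 quickly.
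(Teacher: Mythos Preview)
Your outline for Part~1 is essentially a reorganization of the paper's argument: you subtract the $J$--piece first and then classify the residue $\tilde N$ with $\tilde N(X)\perp\cJ X$, whereas the paper keeps everything together and lets a linear map $L$ emerge from the divisibility condition. The classification you assert (quadratic $\tilde N$ with $\tilde N(X)\perp\cJ X$ for the eight-operator structure \eqref{eq:Jp} must equal $\langle V,X\rangle X-\|X\|^2\pi_{(\cJ X)^\perp}U$) is exactly what the paper's argument yields when one specializes to $L=0$; the paper's main technical tool there is \cite[Lemma~6]{Nmm} rather than bare Moufang identities, but the content is the same.

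There is, however, a genuine gap in Part~2. The ``analog of the classification step'' is \emph{false} for the reducible representation \eqref{eq:Jp7}. Because the representation splits as $\Oc\oplus\Oc$ with each summand invariant, the operator $T(a,b)=(a,-b)$ commutes with every $J_\mu$, and this extra symmetry produces additional quadratic maps orthogonal to $\cJ X$. Concretely, for any $V=(v_1,v_2)\in\Rs$ set
\[
N(X)=\langle V,TX\rangle\,TX=\bigl(\langle v_1,a\rangle-\langle v_2,b\rangle\bigr)\,(a,-b).
\]
Since $\langle a,ae_\mu\rangle=\langle b,be_\mu\rangle=0$ for $e_\mu\in\Oc'$, one has $\langle N(X),J_\mu X\rangle=0$ for all $\mu$, yet a direct bidegree comparison shows $N$ is \emph{not} of the form $\langle V',X\rangle X-\|X\|^2\pi_{(\cJ X)^\perp}U'$ unless $v_1=0$ or $v_2=0$. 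So the intermediate classification you invoke does not hold, and the argument as written breaks at step~(4).

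The paper avoids this trap by never attempting to classify all $N$ with $N\perp\cJ X$. Instead it decomposes $N_1,N_2$ by bidegree in $(a,b)$ and works directly with the \emph{full} hypothesis $a^*N_1+b^*N_2=\langle p,X\rangle\langle a^*b,u\rangle$, using that the ideal generated by $\|a\|^2$ and $\langle p_1,a\rangle$ is prime to force $p=0$ and then solve explicitly for the pieces. Your irreducibility observation at the end (that $\|X\|^2$ cannot divide $\langle p,X\rangle\langle a^*b,u\rangle$ unless $p=0$) is correct and is in spirit close to the paper's key step, but it needs to be embedded in a bidegree analysis rather than in the (non-existent) global classification you assumed.
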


\begin{proof}
1. For $X = (a, b) \in \Rs$, let $N(X) = (N_1(X), N_2(X))$, where $N_1, N_2: \Rs \to \Ro$ are quadratic forms.
By the assumption and  \eqref{eq:Jp}, for any $q \in \Ro$, the cubic polynomial
$\<N(X), J_qX\> = \<N_1,bq\>- \<N_2, aq^*\>$ is divisible by $\|X\|^2$, hence so is the polynomial vector
$b^* N_1 - N_2^* a$. Then there exists a linear operator $L: \Rs \to \Ro$ such that $b^* N_1(X) - N_2(X)^* a =
(\|a\|^2 +\|b\|^2) L(X)$, which simplifies to
\begin{equation}\label{eq:fgl}
    b^*(N_1(X) - b L(X)) = (L(X) a^* + N_2(X)^*) a.
\end{equation}
Let for $X = (a, b)$,
\begin{equation*}
 N_1(X) - b L(X) = \xi_1(b) +  P(a,b) + \phi (a), \quad L(X) a^* + N_2(X)^* = \xi_2(a) +  Q(a,b) + \psi(b),
\end{equation*}
where $\phi,\xi_1,\xi_2,\psi: \Ro \to \Ro$ are quadratic forms and $P, Q: \Ro \times \Ro \to \Ro$ are bilinear forms.
Collecting the terms of the same degree in $a$ and $b$ in \eqref{eq:fgl} we get $\xi_1 = \xi_2 = 0$ and
\begin{equation}\label{eq:system}
    b^* P(a,b) = \psi(b) a, \quad b^* \phi(a) = Q(a,b) a.
\end{equation}
Taking $b=1$ in the second equation we find $\phi(a) = Q(a, 1) a$ and so
$Q(a, b) a = b^* (Q(a, 1) a) = 2\<b, Q(a,1)\> a - Q(a, 1)^* (b a)$, which implies
$(a^*b^*) Q(a,1) = a^*(2\<b, Q(a,1)\>1 - Q(a, b)^*)$.

For every $b$ from the unit sphere $\mathbb{S}^6 \subset \Oc'$, we can now apply \cite[Lemma~6]{Nmm}, with
$Y = a^*, \, e = b^*$, $L(Y) = Q(a,1)$ and $F(Y) = 2\<b, Q(a,1)\>1 - Q(a, b)^*$. Then
$Q(a,1) = \<c(b), a^*\>1 + \<d(b), a^*\> b^* + a p(b)$, for some maps $c,d,p:\mathbb{S}^6 \to \Oc$.
As the left-hand side does not depend on $b$, we get $a(p(b_1)-p(b_2)) \in \Span(1, b_1, b_2)$, for all $a \in \Oc$
and all $b_1, b_2 \in \mathbb{S}^6$, so $p(b) = p$, a constant. Taking the real and the imaginary parts of the both
sides we obtain that $c(b) = c$ is also a constant and $d(b) = 0$. Then $Q(a,1) = \<c^*, a \>1 + a p$, and therefore
$\phi(a) = Q(a, 1) a = \<m, a\> a - \|a\|^2 p^*$, where $m = c^* + 2 p^*$. Then from \eqref{eq:system},
$Q(a,b) a = b^* \phi(a) = (\<m, a\> b^* - (b^*p^*)a^*)a$, so $Q(a,b) = \<m, a\> b^* - (b^*p^*)a^*$.
Similarly, from the first equation of \eqref{eq:system} we get
$\psi(b) = \<r, b^*\> b^* - \|b\|^2 q, \; P(a,b) = \<r, b^*\> a - b(qa)$. Then
\begin{align*}
N_1(X) &= b L(X) +\<r, b^*\> a - b(q a) + \<m, a\> a -\|a\|^2 p^*, \\
N_2(X) &= -a L(X)^* + \<m, a\> b - a(p b) + \<r, b^*\> b -\|b\|^2 q^*,
\end{align*}
which implies $N(X) = J_{A(X)}X + \<V, X\> X - U \|X\|^2$, for all $X \in \Rs$, where $A: \Rs \to \Ro$ is a linear
operator defined by $A(X) = L(X) + b^*p^* - qa$, and $V = (m, r^*), \; U = (p^*, q^*) \in \Rs$.

2. From the assumption, $N_1((a,0))=N_2((0,b))=0$, so
$N_1((a,b))= P(a,b) + \xi_1(b), \; N_2((a,b))= \xi_2(a) + Q(a,b)$, for some quadratic forms $\xi_1, \xi_2:
\Ro \to \Ro$ and bilinear forms $P, Q: \Ro \times \Ro \to \Ro$. Collecting the terms of degree two in $a$ we get
$a^* P(a,b) + b^* \xi_2(a)=\<p_1,a\> \<a^*b,u\>$, where $p_1 \in \Ro, \; p=(p_1, p_2)$. Substituting $b=1$ we get
$\xi_2(a)=\<p_1,a\> \<a^*,u\>-a^* P(a,1)$, so
$a^* P(a,b) -b^*(a^* P(a,1))=\<p_1,a\> (\<a^*b,u\>- \<a^*,u\>b^*)$. Multiplying by $a$ from the left and taking
$a \perp p_1$ we get $\|a\|^2 P(a,b) - a(b^*(a^* P(a,1)))=0$ which can be rewritten as
$ -2(\<a,b\>-\<a,1\>b)(a^* P(a,1)) = \|a\|^2(bP(a,1) - P(a,b))$. Multiplying by $(\<a,b\>-\<a,1\>b)^*$ from the
left we obtain that all the components of the polynomial vector $\|\<a,b\>-\<a,1\>b\|^2(a^* P(a,1))$ belong to the
ideal $\mathbf{I}$ generated by $\|a\|^2$ and $\<p_1,a\>$. For a fixed nonzero $b \perp 1$, the quadratic form
$\|\<a,b\>-\<a,1\>b\|^2$ is not in $\mathbf{I}$ (as it is nonzero and vanishes on $\Span(1, b)^\perp)$. As
the ideal $\mathbf{I}$ is prime, it follows that all the components of $a^* P(a,1)$ belong to $\mathbf{I}$.
Since $P(a,1)$ is linear in $a$, we obtain $a^* P(a,1)=\|a\|^2 c + \<p_1,a\>La$, for some $c_1 \in \Oc$ and some
linear operator $L$ on $\Oc$. Multiplying this by $a$ from the left we obtain that $a \cdot La$ is divisible by
$\|a\|^2$, so $a \cdot La=\|a\|^2c_2$, for some $c_2 \in \Oc$, so $La=a^*c_2$, therefore
$a^* P(a,1)=\|a\|^2 c_1 + \<p_1,a\>a^*c_2$ which implies $P(a,1)=a c_1 + \<p_1,a\>c_2$. Then
$a^* P(a,b) -b^*(a^* (a c_1 + \<p_1,a\>c_2))=\<p_1,a\> (\<a^*b,u\>- \<a^*,u\>b^*)$, for all $a, b \in \Oc$,
so $a^* P(a,b) - \|a\|^2 b^*c_1 =\<p_1,a\> (\<a^*b,u\>- \<a^*,u\>b^*+b^*(a^* c_2))$.

Assume $p_1 \ne 0$. Then multiplying by $a$ from the left we obtain that all the components of the polynomial vector
$a(\<a^*b,u\>- \<a^*,u\>b^*+b^*(a^* c_2))$ are divisible by $\|a\|^2$. As it is linear in $b$ and quadratic in $a$, we
obtain $a(\<a^*b,u\>- \<a^*,u\>b^*+b^*(a^* c_2))=\|a\|^2 L_1b$ for some linear operator $L_1$ on $\Oc$, so
$\<a^*b,u\>- \<a^*,u\>b^*+b^*(a^* c_2)=a^* \cdot L_1b$. Taking $a=1$ we get $L_1b=\<b,u\>+b^*c_2$, so
$\<a^*b,u\>- \<a^*,u\>b^*- \<b,u\>a^*=a^* (b^*c_2)-b^*(a^* c_2)$, for all $a, b \in \Oc$. This implies that for all
orthogonal $a, b \in \Oc', \; a(bc_2) \in \Span(a,b,1)$, so $bc_2 \in \Span(a,ab,1)$, so $c_2 \in \Span(ab,a,b)$.
Thus $c_2=0$, so $\<a^*b,u\>- \<a^*,u\>b^*- \<b,u\>a^*=0$, for all $a, b \in \Oc$, which leads to a contradiction
(take $b=u, \; a \perp 1, u$).

It follows that $p_1=0$, so $a^* P(a,b) - \|a\|^2 b^*c_1 =0$ which implies $P(a,b) =a(b^*c_1)$, and then
$\xi_2(a)=\<p_1,a\> \<a^*,u\>-a^* P(a,1)=-\|a\|^2 c_1$. Similarly, $Q(a,b) =b(a^*c_3)$ and $\xi_1(b)=-\|b\|^2 c_3$,
for some $c_3 \in \Oc$. It follows that $N(X)=(a(b^*c_1) -\|b\|^2 c_3,b(a^*c_3)-\|a\|^2 c_1)$. Then by \eqref{eq:Jp7},
$N(X)=\|X\|^2(m- \pi_{\cI X}m)=\|X\|^2 m - \<m,X\>X - \sum_{i=1}^7\<J_iX,m\>J_iX$, where $m=(-c_3,-c_1) \in \Rs$.
\end{proof}

\subsection{Algebraic curvature tensors with a Cayley structure}
\label{ss:actcayley}

The curvature tensor $R^{\Oc}$ of the Cayley projective plane $\Oc P^2$ of the sectional
curvature between $1$ and $4$, is explicitly given in \cite[Eq.~6.12]{BG}. Identifying
the tangent space $T_x\Oc P^2$ with  $\Oc \oplus \Oc$ via a linear isometry, we have for
$X=(x_1, x_2), \, Y=(y_1, y_2), Z=(z_1, z_2) \in T_x\Oc P^2 = \Oc \oplus \Oc$:
\begin{align*}
R^{\Oc}(X,Y)Z=
(&4\<x_1,z_1\>y_1-4\<y_1,z_1\>x_1-(z_1y_2)x_2^*+(z_1x_2)y_2^*-(x_1y_2-y_1x_2)z_2^*,\\
&4\<x_2,z_2\>y_2-4\<y_2,z_2\>x_2-x_1^*(y_1z_2)+y_1^*(x_1z_2)+z_1^*(x_1y_2-y_1x_2)).
\end{align*}
Introducing the symmetric operators $S_i \in \End(\Rs), \; i=0,1, \dots, 8$, by
$S_0 (x_1, x_2)= (x_1, -x_2)$, $S_i (x_1, x_2)= (e_ix_2^*, x_1^*e_i), \; i=1, \dots, 8$, where $\{e_i\}$ is an
orthonormal basis for $\Oc$, we obtain
\begin{equation}\label{eq:Rop2}
    R^{\Oc}(X,Y)= 3 X \wedge Y + \sum\nolimits_{i=0}^8 S_i X \wedge S_i Y,
\end{equation}
where $X \wedge Y$ is the skew-symmetric operator defined by $(X \wedge Y)Z=\<X,Z\>Y-\<Y,Z\>X$.

As it follows from the definition, the operators $S_i$ are orthogonal and satisfy
\begin{equation}\label{eq:SiSj}
S_iS_j+S_jS_i=2 \K_{ij} \id, \quad 0 \le i, j \le 8.
\end{equation}
For every $w$ in the Euclidean space $\Rd$, introduce the symmetric operator
$S_w=\sum_{i=0}^8 w_i S_i$. As it follows from \eqref{eq:SiSj}, the map $S: \Rd \times \Rs \to \Rs$ defined by
$(w, X) \to S_wX$ is an orthogonal multiplication: $\|S_wX\|=\|w\| \cdot \|X\|$, for all $w \in \Rd, \; X \in \Rs$.
We usually abbreviate $S_{e_i}$ to $S_i$.

The operators $S_i$ define the structure of the Clifford $\Cl^+(9)$-module on the Euclidean space $\Rs$ as follows.
Denote $\Cl^+(9)$ the Clifford algebra on nine generators  $x_0, x_1, \dots, x_8$, an associative unital algebra over
$\br$ defined by the relations $x_ix_j+x_jx_i=2 \K_{ij}$. The map $\sigma: \Cl^+(9) \to \Rs$ defined on generators by
$\sigma(x_i)=S_i$ (and $\sigma(1) = \id$) is a representation of $\Cl^+(9)$ on $\Rs$. The Clifford algebra $\Cl^+(9)$
is isomorphic to $\br(16) \oplus \br(16)$, where $\br(16)$ is the algebra of $16 \times 16$ real matrices
\cite[\S4]{ABS}, so $\sigma$ is surjective. In particular, as by \eqref{eq:SiSj} the operator $\prod_{i=0}^8 S_i$
commutes with all the $S_i$'s (hence with all the $\br(16)$) and is orthogonal, we have $\prod_{i=0}^8 S_i=\pm \id$.

As by \eqref{eq:SiSj}, for every nonzero $w \in \Rd, \; S_w^2=\|w\|^2\id$ and $\Tr S_w=0$ (multiply \eqref{eq:SiSj} by
$S_i$ and take the trace), $S_w$ has two eigenvalues $\pm\|w\|$, each of multiplicity $8$. Denote $E_{\|w\|}(S_w)$ the
$\|w\|$-eigenspace of $S_w$ and $\pi_{E_{\|w\|}(S_w)}$ the orthogonal projection of $\Rs$ to $E_{\|w\|}(S_w)$. Then
$\pi_{E_{\|w\|}(S_w)}=\frac12(\|w\|^{-1}S_w+\id)$.

Introduce the subspaces $\mL_k=\Span_{i_1 < \dots < i_k}(S_{i_1} \dots S_{i_k}) \subset \End(\Rs), \; 0 \le k \le 9$
(in particular, $\mL_0= \br \, \id$). On $\End(\Rs)$, introduce the inner product by $\<Q_1, Q_2\>=\Tr (Q_1 Q_2^t)$
and the operator $\mA$ by
\begin{equation}\label{eq:mAdef}
\mA Q=\sum\nolimits_{i=0}^8 S_i Q S_i, \qquad Q \in \End(\Rs).
\end{equation}


\begin{lemma}\label{l:mA}
1. For $k=0, \dots, 9$, $\mL_{9-k}=\mL_k$. Moreover, $\oplus_{k=0}^4 \mL_k = \End(\Rs)$ and
$\Sym(\Rs)=\mL_0 \oplus \mL_1 \oplus \mL_4, \; \Sk(\Rs)=\mL_2 \oplus \mL_3$, where $\Sk(\Rs)$ and $\Sym(\Rs)$
are the spaces of the skew-symmetric and the symmetric endomorphisms of $\Rs$ respectively, and all the direct sums
are orthogonal. Moreover, $\{\frac{1}{4}S_{i_1} \dots S_{i_k}, \; i_1 < \dots < i_k\}$ is an orthonormal basis for
$\mL_k, \; 0 \le k \le 4$.

2. The operator $\mA$ is symmetric and does not depend on the choice of an orthonormal basis $\frac14 S_i$ for
$\mL_1$. Its eigenspaces are $\mL_k, \; 0 \le k \le 4$, with the corresponding eigenvalues $(-1)^k(9-2k)$. In
particular, $\Sk(\Rs)$ and $\Sym(\Rs)$ are invariant subspaces of $\mA$.

3. For every $w \in \Rd, \; Q \in \End(\Rs), \;
\mA(QS_w)=-\mA(Q)S_w+2S_wQ, \; \mA(S_wQ)=-S_w\mA(Q)+2QS_w$.
For every $X, Y \in \Rs, \quad \mA(X \wedge Y)=\sum\nolimits_{i=0}^8 S_i X \wedge S_i Y$.

4. For every $X \in \Rs, \; X \in \Span_{i=0}^8(S_iX)$ and $\sum_{i=0}^8\<S_iX,X\>S_iX=\|X\|^2X$.
For every $X, Y \in \Rs$, $\sum_{i=0}^8(2\<S_iX,Y\>S_iX+\<S_iX,X\>S_iY)=\|X\|^2Y+2\<X,Y\>X$.
For a unit vector $X \in \Rs, \; R^{\Oc}_XY=Y$, when $Y \perp \Span_{i=0}^8(S_iX)$ and $R^{\Oc}_XY=\frac14 Y$,
when $Y \in \Span_{i=0}^8(S_iX), \, Y \perp X$.


5. Let $N: \Rs \times \Rs \to \Rs$ be a bilinear skew-symmetric map such that $\<N(X,Y),Z\>=0$, for every
$w \in \Rd$ and for every $X, Y, Z \in E_{\|w\|}(S_w)$. Then there exists $q \in \Rs$ such that
\begin{equation}\label{eq:n111=0}
    N(X,Y)=(\mA-\id)(X \wedge Y)q=\sum\nolimits_{i=0}^8 (\<S_i X,q\>S_i Y-\<S_i Y,q\>S_i X) - (\<X,q\> Y-\<Y,q\> X).
\end{equation}
\end{lemma}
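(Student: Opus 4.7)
My plan has three parts: verify that $N_q(X,Y):=(\mA-\id)(X\wedge Y)q$ satisfies the hypothesis for every $q\in\Rs$, verify $q\mapsto N_q$ is injective, and then show every admissible $N$ is of this form.

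For the first two parts, the key observation is $\langle S_iX,Z\rangle=(w_i/\|w\|)\langle X,Z\rangle$ for $X,Z\in E_{\|w\|}(S_w)$ (which follows from applying $S_wS_i+S_iS_w=2w_i\id$ to $X$ and pairing with $Z$, using $S_wZ=\|w\|Z$). Substituting into $\langle N_q(X,Y),Z\rangle$ for $X,Y,Z\in E_{\|w\|}(S_w)$ and using $\sum_i w_iS_iX=S_wX=\|w\|X$, the expression collapses to $0$. For injectivity, a short computation using part~4 yields $N_q(q,Y)=3(\langle q,Y\rangle q-\sum_i\langle S_iq,Y\rangle S_iq)=-3\|q\|^2\pi_{\cI q\ominus\Span(q)}(Y)$, which is nonzero on the 8-dimensional subspace $\cI q\ominus\Span(q)$ when $q\ne 0$.

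For the main direction, I would rewrite the hypothesis as polynomial identities in $w$. Using $\pi_{E_{\|w\|}(S_w)}=\tfrac12(\|w\|^{-1}S_w+\id)$, the condition is equivalent to
\[\langle N((S_w+\|w\|\id)X,(S_w+\|w\|\id)Y),(S_w+\|w\|\id)Z\rangle=0,\quad w\in\Rd.\]
Symmetrizing under $w\mapsto-w$ and writing $M(X,Y,Z)=\langle N(X,Y),Z\rangle$, the even and odd (in $S_w$) parts separately yield
\begin{gather*}
M(S_wX,S_wY,Z)+M(S_wX,Y,S_wZ)+M(X,S_wY,S_wZ)+\|w\|^2 M(X,Y,Z)=0,\\
M(S_wX,S_wY,S_wZ)+\|w\|^2\bigl(M(S_wX,Y,Z)+M(X,S_wY,Z)+M(X,Y,S_wZ)\bigr)=0.
\end{gather*}
Polarizing the first identity at $w=e_i$ and summing over $i$ produces the ``Casimir'' identity $\sum_i\bigl[M(S_iX,S_iY,Z)+M(S_iX,Y,S_iZ)+M(X,S_iY,S_iZ)\bigr]=-9M(X,Y,Z)$.

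The final step is to define $q$ canonically from $N$ and show $N=N_q$. My candidate: for each $i=0,\ldots,8$, introduce the partial trace $v_i\in\Rs$ by $\langle v_i,X\rangle=\sum_k\langle N(X,e_k),S_ie_k\rangle$ (with $\{e_k\}$ any orthonormal basis of $\Rs$); a direct computation using $\sum_k\langle S_je_k,S_ie_k\rangle=16\delta_{ij}$ and part~3 gives $v_i=24S_iq$ when $N=N_q$, so one sets $q:=\tfrac{1}{216}\sum_i S_iv_i$. The residual $\tilde N:=N-N_q$ then satisfies both the constraint and has vanishing partial traces $v_i$; combined with the polarized identities and part~4, this should force $\tilde N=0$. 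The principal difficulty lies in this last step: the spin representation $V=\Rs$ appears in $V\otimes\wedge^2 V$ with $\operatorname{Spin}(9)$-multiplicity two, so the constraint must precisely select the correct 16-dimensional copy, requiring detailed octonion-algebraic manipulations in the spirit of the paper's other octonion lemmas.
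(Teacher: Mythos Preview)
Your preliminary work is sound. The verification that $N_q$ satisfies the hypothesis and the injectivity argument are correct, and your two polynomial identities in $w$ are exactly the even and odd parts that the paper also derives (in the paper's language, writing $K(Z)\in\Sk(\Rs)$ for $\langle K(Z)X,Y\rangle=\langle N(X,Y),Z\rangle$, your ``odd'' identity becomes $S_wK(S_wZ)S_w+\|w\|^2\bigl(K(S_wZ)+S_wK(Z)+K(Z)S_w\bigr)=0$). Your computation $v_i=24S_iq$ and hence $q=\tfrac{1}{216}\sum_i S_iv_i$ is also correct.

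The genuine gap is precisely where you say it is: you have not shown that the residual $\tilde N$ (satisfying the constraint and having vanishing traces $v_i$) must vanish. This is the entire content of the assertion; the ``Casimir'' identity you derive is too weak by itself, and an appeal to $\operatorname{Spin}(9)$-multiplicities is not a proof without an explicit branching computation you have not supplied.

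The paper closes this gap by a different and more concrete route, avoiding representation theory entirely. It works throughout with $K(Z)\in\Sk(\Rs)=\mL_2\oplus\mL_3$. From the identity above, one pairs with $S_iS_j$ and reduces to showing that the polynomials $F_{Z,i}(w)=\Tr(K(S_wZ)S_iS_w)$ satisfy $\|w\|^2\mid w_iF_{Z,j}-w_jF_{Z,i}$. A unique factorization argument in $\br[w]/(\|w\|^2)$ (Nagata) then forces $F_{Z,i}(w)=\langle TZ,w\rangle w_i+\|w\|^2\langle GZ,e_i\rangle$ for linear $T,G:\Rs\to\Rd$; feeding this back in yields $T^tw=S_wp$ and $G^tw=-S_wp$ for a single $p\in\Rs$, whence $\pi_2K(Z)=-\tfrac12\pi_2(Z\wedge p)$. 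Projecting the operator identity to $\mL_2$ and using the commutation rules of part~3 then pins down $\pi_3K(Z)=\tfrac12\pi_3(Z\wedge p)$ as well, and $K(Z)=-\tfrac18(\mA-\id)(Z\wedge p)$ follows. Your trace definition of $q$ is compatible with this $p$ (indeed $q=\tfrac18p$), but the paper's argument supplies the missing rigidity: it directly shows that $\pi_2K(Z)$ and $\pi_3K(Z)$ are each determined by a single vector, rather than trying to kill a trace-free residual after the fact.
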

\begin{proof}
1. The fact that $\mL_{9-k}=\mL_k$ follows from $\prod_{i=0}^8 S_i=\pm \id$. Moreover, all the operators
$\frac{1}{4}S_{i_1} \dots S_{i_k}$, $i_1 < \dots < i_k, \; 0 \le k \le 4$, are orthonormal.
Indeed, from \eqref{eq:SiSj}, the norm of each of them is $1$. The inner product of two different ones is
$\frac{1}{16}$ times the trace of some $S'= S_{j_1} \dots S_{j_p}, \; j_1 < \dots < j_p, \; 1 \le p \le 8$, which is
clearly zero, if $S'$ is skew-symmetric. Otherwise, $p=1$ or $p=4$ (by $\mL_{9-k}=\mL_k$), and in the both cases,
$\Tr S' = 0$, as $S'$ is a product of a symmetric and a skew-symmetric operator: $S_i=S_i \cdot (S_iS_j)$,
when $p=1$, and $S_iS_jS_kS_l=S_i \cdot (S_jS_kS_l)$, when $p=4$ (for arbitrary pairwise nonequal $i,j,k,l$).
Now, as the $S_i$'s are symmetric, \eqref{eq:SiSj} implies that $\mL_0 \oplus \mL_1 \oplus \mL_4  \subset \Sym(\Rs)$
and that $\mL_2 \oplus \mL_3 \subset \Sk(\Rs)$, with both inclusions being in fact equalities by the dimension count.

2. 3. Directly follow from (\ref{eq:SiSj}, \ref{eq:mAdef}) and the fact that $S_i(X \wedge Y)S_i=S_iX \wedge S_iY$.

4. For $X=(x_1,x_2) \ne 0, \; x_1, x_2 \in \Oc$, define $w \in \Rd=\br \oplus \Oc$ by
$w=\|X\|^{-2}((\|x_1\|^2-\|x_2\|^2)e_0$ $+2x_1x_2)$. Then $S_wX=X$ by the definition of the $S_i$'s,
so $X \in \Span_{i=0}^8(S_iX)$. As $\<S_iX,S_jX\>=\K_{ij}\|X\|^2$ by \eqref{eq:SiSj}, equation
$\sum_{i=0}^8\<S_iX,X\>S_iX=\|X\|^2X$ follows. The second equation is obtained by polarization. Substituting it to
the expression for $R^{\Oc}_XY$ obtained from \eqref{eq:Rop2}, we get the last statement of the assertion.

5. For every $Z \in \Rs$, define $K(Z) \in \Sk(\Rs)$ by $\<K(Z)X,Y\>=\<N(X,Y),Z\>$.
As $\pi_{E_{\|w\|}(S_w)}=\frac12(\|w\|^{-1}S_w+\id)$ we have
$(\|w\|^{-1}S_w+\id)K((\|w\|^{-1}S_w+\id)Z)(\|w\|^{-1}S_w+\id)=0$, for all nonzero $w \in \Rd$, so
$(S_wK(S_wZ)S_w+\|w\|^2(K(S_wZ)+S_wK(Z)+K(Z)S_w))+\|w\|(S_wK(S_wZ)+K(S_wZ)S_w+S_wK(Z)S_w+\|w\|^2K(Z))=0$.
As $\|w\|$ is not a rational functions, we obtain
\begin{equation}\label{eq:NSSS}
S_wK(S_wZ)S_w+\|w\|^2(K(S_wZ)+S_wK(Z)+K(Z)S_w)=0,
\end{equation}
for all $w \in \Rd$ and all $Z \in \Rs$.
It follows that for every fixed $Z \in \Rs$ and for every $i,j=0, \dots, 8$, the expression
$\Tr(S_wK(S_wZ)S_w S_iS_j)$, viewed as a polynomial of $w = (w_0, \dots, w_8)$, is divisible by $\|w\|^2$
(the $w_i$'s are the coordinates of $w$ relative to the orthonormal basis $\{e_0,e_1, \dots , e_8\}$ for $\Rd$ such
that $S_{e_i}=S_i$). Then from \eqref{eq:SiSj} we obtain that
$\|w\|^2 \mid w_i \Tr(K(S_wZ)S_jS_w)-w_j \Tr(K(S_wZ)S_iS_w)$. For every $Z \in \Rs$, define the
polynomials $F_{Z,i}(w)=\Tr(K(S_wZ)S_iS_w)$. Then $\|w\|^2 \mid w_i F_{Z,j}(w)-w_j F_{Z,i}(w)$.
Let $\mathbf{I}$ be the ideal of the polynomial ring
$\mathbf{K}=\br[w_0,w_1, \dots, w_8]$ generated by $\|w\|^2$ and let $\pi: \mathbf{K} \to \mathbf{K}/\mathbf{I}$ be
the natural projection. Then $\pi(w_i F_{Z,j}-w_j F_{Z,i})=0$, so the $2 \times 9$-matrix over the ring
$\mathbf{K}/\mathbf{I}$ whose $i$-th row is $(\pi(w_i),  \pi(F_{Z,i})), \; i=0,1,\dots,8$, has rank at most one.
As the ring $\mathbf{K}/\mathbf{I}$ is a unique factorization domain \cite{Nag}, there exist
$x,y,u_i \in \mathbf{K}/\mathbf{I}$, such that $\pi(w_i)=xu_i, \, \pi(F_{Z,i})=yu_i$. Since the
elements $\pi(w_i)=xu_i$ are coprime, $x$ is invertible, so we can take $x=1$, hence $\pi(F_{Z,i})=y\pi(w_i)$.
Lifting this equation up to $\mathbf{K}$ we obtain that for every $Z \in \Rs$, there exist polynomials $Y_Z(w)$ and
$g_{Z,i}(w)$ such that $F_{Z,i}(w)=Y_Z(w)w_i +\|w\|^2g_{Z,i}(w)$. As $F_{Z,i}$ is a quadratic form,
the comparison of terms of the same degree gives that for every $Z$, we can choose $Y_Z(w)$ to be a linear form in $w$
and the $g_{Z,i}(w)$'s to be constants. Using the linearity by $Z$ be obtain that for some linear maps $T,G:\Rs\to\Rd$,
$\Tr(K(S_wZ)S_iS_w)=F_{Z,i}(w)=\<TZ,w\>w_i +\|w\|^2\<GZ,e_i\>$, for all $Z \in \Rs, \; w \in \Rd, \; i=0,1, \dots, 8$.
Substituting $S_wZ$ as $Z$ we obtain $\|w\|^2\Tr(K(Z)S_iS_w)=\<TS_wZ,w\>w_i +\|w\|^2\<GS_wZ,e_i\>$. It follows that
$\|w\|^2 \mid \<TS_wZ,w\>=\<Z,S_wT^tw\>$, for all $Z \in \Rs$, so $S_wT^tw=\|w\|^2p$, for some $p \in \Rs$. As
$S_w^2=\|w\|^2\id$, this implies $T^tw=S_wp$, so $\Tr(K(Z)S_iS_w)=\<Z,p\>w_i +\<GS_wZ,e_i\>$. Multiplying both sides by
$w_i$ and summing up by $i=0,1, \dots, 8$ we get $0=\<Z,p\>\|w\|^2 +\<GS_wZ,w\>$ (the left-hand side vanishes, as
$S_w^2=\|w\|^2\id$ and $K(Z)$ is skew-symmetric). It follows that $\|w\|^2 p +S_wG^tw=0$, so $G^tw=-S_w p$. Hence
$\Tr(K(Z)S_iS_w)=\<Z,p\>w_i +\<S_wZ,G^te_i\>=\<Z,p\>w_i -\<S_wZ,S_ip\>$. In particular, for $w=e_j,\; j \ne i$, we
get by \eqref{eq:SiSj}: $\Tr(K(Z)S_iS_j)=\<Z,S_iS_jp\>=-\frac12\<Z \wedge p,S_iS_j\>$. As
$\{\frac14S_iS_j, \; i <j\}$ is an orthonormal basis for $\mL_2$ by assertion~1, we obtain
$\pi_2 K(Z)=-\frac12 \pi_2(Z\wedge p)$, for all $Z \in \Rs$, where $\pi_2$ is the orthogonal projection from
$\Sk(\Rs)$ to $\mL_2$.

Now for any $K \in \Sk(\Rs), \; \pi_2(K)= \frac18(\mA+3\id)K$ and $\pi_3(K)= -\frac18(\mA-5\id)K$,
(where $\pi_i$ is the orthogonal projection to $\mL_i$) by assertions~1 and 2, and
$\mA(S_wKS_w)=S_w\mA(K)S_w$ and $\mA(S_wK+KS_w)=-S_w\mA(K)-\mA(K)S_w+2S_wK+2KS_w$, by assertion~3, so
$\pi_2(S_wK+KS_w)=S_w\pi_3(K)+\pi_3(K)S_w$ and $\pi_2(S_wKS_w)=S_w\pi_2(K)S_w$. Projecting \eqref{eq:NSSS} to $\mL_2$
we then obtain $S_w\pi_2(K(S_wZ))S_w+\|w\|^2(\pi_2(K(S_wZ))+S_w\pi_3(K(Z))+\pi_3(K(Z))S_w)=0$. As it is shown above,
$\pi_2 K(Z)=-\frac12 \pi_2(Z\wedge p)$, so
$-\frac12 S_w\pi_2((S_wZ)\wedge p)S_w+\|w\|^2(-\frac12 \pi_2((S_wZ)\wedge p)+S_w\pi_3(K(Z))+\pi_3(K(Z))S_w)=0$,
which (using the fact that $S_w\pi_2(K)S_w=\pi_2(S_wKS_w)$ and that $S_w(X \wedge Y) S_w= (S_wX) \wedge (S_wY)$)
simplifies to $S_w\pi_3(K(Z))+\pi_3(K(Z))S_w=\frac12 \pi_2(Z\wedge (S_wp)+(S_wZ)\wedge p)$. As
$S_w(X\wedge Y)+(X\wedge Y) S_w= (S_wX) \wedge Y +X \wedge (S_wY)$ and $\pi_2(S_wK+KS_w)=S_w\pi_3(K)+\pi_3(K)S_w$,
we obtain $S_w\pi_3(K(Z)-\frac12 Z\wedge p)+\pi_3(K(Z)-\frac12 Z\wedge p)S_w=0$. Taking $w=e_i$ we get
$S_i\pi_3(K(Z)-\frac12 Z\wedge p)S_i=-\pi_3(K(Z)-\frac12 Z\wedge p)$, which implies
$\mA(\pi_3(K(Z)-\frac12 Z\wedge p))$ $=-9\pi_3(K(Z)-\frac12 Z\wedge p)$, so
$\pi_3(K(Z)-\frac12 Z\wedge p)=0$ by assertion~2.

As $K(Z)=\pi_2(K(Z))+\pi_3(K(Z))$ by assertion~1, it follows that
$K(Z)=\frac12(-\pi_2+\pi_3)$ $(Z\wedge p)=\frac{1}{16}( -(\mA+3\id)-(\mA-5\id))(Z\wedge p)
=-\frac{1}{8}(\mA-\id)(Z\wedge p)$. Since $\<K(Z)X,Y\>=-\frac12\<K(Z),X \wedge Y\>$ and $\mA$ is symmetric,
$\<N(X,Y),Z\>=\<K(Z)X,Y\>=\frac{1}{16}\<(\mA-\id)(Z\wedge p),X \wedge Y\>
=\frac{1}{16}\<(\mA-\id)(X \wedge Y),Z\wedge p\>=\frac{1}{8}\<(\mA-\id)(X \wedge Y)p,Z\>$, so
$N(X,Y)=(\mA-\id)(X \wedge Y)q$, with $q=\frac{1}{8}p$, as required.
\end{proof}

\section{Conformally Osserman manifolds. Proof of Theorem~\ref{t:co} and Theorem~\ref{t:ws}}
\label{s:conf}

In all the cases, except when $n=16$, Theorem~\ref{t:co} is already proved: for the Osserman Conjecture, see
\cite[Theorem~2]{Nma}, for the Conformal Osserman Conjecture, see \cite[Theorem~1]{Nco}. In this section, we will
first prove Theorem~\ref{t:co} for conformally Osserman manifold of dimension $16$ (assuming Conjecture~A) and then
deduce from it the proof for ``genuine" Osserman manifolds.

Theorem~\ref{t:ws} is an easy corollary of Theorem~\ref{t:co}, as in Theorem~\ref{t:ws} we consider the Riemannian
manifolds $M^n$, for which Conjecture~A is already ``satisfied" by the assumption: the Weyl tensor of each of them at
every point is proportional either to the Weyl tensor of $\bc P^n$ or $\mathbb{H} P^n$ (hence has a Clifford
structure) or to the Weyl tensor of $\Oc P^2$. By Theorem~\ref{t:co}, $M^n$ is locally conformally equivalent to a
rank-one symmetric space, which is, in fact, $M_0^n$, as the Weyl tensors of different rank-one symmetric spaces are
different (for instance, because their Jacobi operators have different multiplicities of the eigenvalues).

We start with a brief informal outline of the proof of the conformal part of Theorem~\ref{t:co}. Recall that the Weyl
tensor of a Riemannian manifold $M^n$ is defined by
\begin{equation}\label{eq:defWeyl}
    R(X,Y)= \hat \rho X \wedge Y - \hat \rho Y \wedge X + W(X,Y),
\end{equation}
where $\hat \rho=\frac{1}{n-2}\operatorname{Ric}-\frac{\operatorname{scal}}{2(n-1)(n-2)}\id$,
$\operatorname{Ric}$ is the Ricci operator, $\operatorname{scal}$ is the scalar curvature and $X \wedge Y$ is
the skew-symmetric operator defined by $(X \wedge Y)Z=\<X,Z\>Y-\<Y,Z\>X$.
According to Conjecture~A, the Weyl tensor has either a Clifford or a Cayley structure. First of all, in
Section~\ref{ss:smooth} we show that both
these structures can be chosen smooth on an open, dense subset $M' \subset M^{16}$ (see Lemma~\ref{l:locc1} for the
precise statement), so that on every connected component $M_\a$ of $M'$, the curvature tensor $R$ of $M^{16}$ is given
by either \eqref{eq:confcs} or \eqref{eq:op2ctc} (up to a conformal equivalence), with all the operators and the
functions involved being locally smooth.
Then we establish the local version of the theorem, at every point $x \in M'$, for each of the two cases separately.
This is done by using the differential Bianchi identity and the fact that under a conformal change of the
metric, the symmetric tensor field $\rho$ (which is a linear combination of $\hat\rho$ from \eqref{eq:defWeyl} and
the identity) is a Codazzi  tensor, that is, $(\n_X\rho) Y=(\n_Y\rho) X$. Using the result of \cite{DS}, we show
that $\rho$ must be a constant multiple of the identity, which implies that every connected component
$M_\a \subset M'$ is locally conformally equivalent to a symmetric Osserman manifold.
The proof in the Clifford case is given in Section~\ref{ss:cc} (Lemma~\ref{l:nablarho} and
Lemma~\ref{l:codazzi}), in the Cayley case, in Section~\ref{ss:oc} (Lemma~\ref{l:op2bi}).
Then, by the result of \cite[Lemma 2.3]{GSV}, every $M_\a$ is either locally conformally flat or is locally conformal
to a rank-one symmetric space.
In Section~\ref{ss:glo} we prove the conformal part of Theorem~\ref{t:co} globally, by
first showing (using Lemma~\ref{l:mmeps}) that $M$ splits into a disjoint union of a closed subset $M_0$, on which the
Weyl tensor vanishes, and nonempty open connected subsets $M_\a$, each of which is locally conformal to one of the
rank-one symmetric spaces. On every $M_\a$, the conformal factor $f$ is a well-defined positive smooth function.
Assuming that there exists at least one $M_\a$ and that $M_0 \ne \varnothing$ we show that there exists a point
$x_0 \in M_0$ on the boundary of a geodesic ball $B \subset M_\a$ such that both $f(x)$ and $\n f(x)$ tend to zero when
$x \to x_0, \; x \in B$ (Lemma~\ref{l:x_0}). Then the positive function $u=f^{7/2}$ satisfies an elliptic equation
in $B$, with $\lim_{x \to x_0, x \in B}u(x)=0$, hence by the boundary point theorem, the limiting value of the inner
derivative of $u$ at $x_0$ must be positive. This contradiction implies that either $M=M_0$ or $M=M_\a$, thus
proving the conformal part of Theorem~\ref{t:co}. The ``genuine Osserman" part of Theorem~\ref{t:co} then follows
easily using the result of \cite{Nic}.

\subsection{Smoothness of the Clifford and of the Cayley structures}
\label{ss:smooth}

Let $M^{16}$ be a connected smooth Riemannian manifold whose Weyl tensor at every point is Osserman.
Define a function $N: M^{16} \to \mathbb{N}$ as follows: for $x \in M^{16}, \; N(x)$ is the number of distinct
eigenvalues of the operator $W_{X|X^\perp}$, where $W_X$ is the Jacobi operator associated to the Weyl tensor and
$X$ is an arbitrary nonzero vector from $T_xM^{16}$. As the Weyl tensor is Osserman, the function $N(x)$ is
well-defined. Moreover, as the set of symmetric operators having no more than $N_0$ distinct eigenvalues is closed in
the linear space of symmetric operators on $\br^{15}$, the function $N(x)$ is lower semi-continuous (every subset
$\{x \, : \, N(x) \le N_0\}$ is closed in $M^{16}$). Let $M'$ be the set of points where the function $N(x)$ is
continuous (that is, locally constant). It is easy to see that $M'$ is an open and dense (but possibly disconnected)
subset of $M^{16}$. The following lemma shows that, assuming Conjecture~A, all the ``ingredients" of the curvature
tensor are locally smooth on every connected component of $M'$.

\begin{lemma} \label{l:locc1}
Let $M^{16}$ be a smooth conformally Osserman Riemannian manifold. Let $M'$ be the (open, dense) subset of all the
points of $M^{16}$ at which the number of distinct eigenvalues of the Jacobi operator associated to the Weyl tensor of
$M^{16}$ is locally constant.

Assume Conjecture A.
Then for every $x \in M'$, there exists a neighborhood $\mU=\mU(x)$ with exactly one of the following properties.
\begin{enumerate}[(a)]
    \item \label{it:cl}
    There exists $\nu \ge 0$, smooth functions $\eta_1, \dots, \eta_{\nu}: \mU \to \br \setminus \{0\}$, a smooth
    symmetric linear operator field $\rho$ and smooth anticommuting almost Hermitian structures
    $J_i, \; i=1, \dots, \nu$, on $\mU$ such that for all $y \in \mathcal{U}$ and all $X, Y, Z \in T_yM^{16}$,
    the curvature tensor of $M^{16}$ has the form
\begin{multline}\label{eq:confcs}
R(X, Y) Z = \< X, Z \> \rho Y +\<\rho X, Z\> Y - \< Y, Z \> \rho X -\<\rho Y, Z\> X \\
+ \sum\nolimits_{i=1}^\nu \eta_i (2 \< J_iX, Y \> J_iZ + \< J_iZ, Y \> J_iX - \< J_iZ, X \> J_iY).
\end{multline}

    \item \label{it:oc}
The Riemannian manifold $\mU$ is conformally equivalent to a Riemannian manifold whose curvature tensor has the form
\begin{equation}\label{eq:op2ctc}
    R(X,Y)=\rho X \wedge Y - \rho Y \wedge X + \ve \sum\nolimits_{i=0}^8 S_i X \wedge S_i Y
    = \rho X \wedge Y - \rho Y \wedge X + \ve \mA(X \wedge Y),
\end{equation}
at every point $y \in \mU$, where $\ve = \pm 1$ and $\rho, S_i, \; i=0, \dots, 8$, are smooth fields of symmetric
operators on $\mU$ satisfying \eqref{eq:SiSj}.
\end{enumerate}
\end{lemma}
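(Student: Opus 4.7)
The plan is to combine the pointwise classification provided by Conjecture~A with a smoothness argument based on the constancy of Jacobi eigenvalue multiplicities on $M'$.

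At every $x\in M^{16}$ the Weyl tensor $W_x$ is an Osserman algebraic curvature tensor on $T_xM^{16}\cong\Rs$, so by Conjecture~A together with the classifications in \cite{Nma,Nmm,Nbel} (which cover the cases where no Jacobi eigenvalue has multiplicity $7$ or $8$), $W_x$ has either a Clifford or a Cayley structure. The ``type'' of $W_x$ is encoded by the multiplicity pattern of $W_{X|X^\perp}$ together with a Clifford/Cayley flag. On $M'$ the number of distinct eigenvalues is locally constant by definition, and since eigenvalues depend continuously on $x$ and multiplicities are positive integers summing to $15$, the full multiplicity pattern is locally constant. Almost every pattern singles out a unique Clifford $\nu$ with a unique partition of the $\eta_i$'s; the only coincidence is for the unordered pattern $\{7,8\}$, which occurs for the Cayley tensors and for the Clifford tensors with $\nu=7$ or $\nu=8$ and all $\eta_i$ equal. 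These are separated by a continuous algebraic feature of the curvature tensor: the operators producing the distinguished eigendistribution of $W_X$ are skew-symmetric ($J_i$ with $J_i^2=-\id$) in the Clifford case and symmetric ($S_i$ with $S_i^2=\id$) in the Cayley case, cf.\ \eqref{eq:Rop2} and \eqref{eq:SiSj}. Since this feature is continuous in $x$, it is locally constant, so each connected component of $M'$ is of a single type.

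Fix $x\in M'$ with a neighborhood $\mU$ on which the type is constant. In case (a), the spectral decomposition of $W_X$ produces smooth eigenvalue functions $\la_0$ and $\la_0+3\eta_i$ (after a smooth labelling) and smooth spectral projections, the latter because multiplicities are constant. The eigendistributions corresponding to the eigenvalues $\la_0+3\eta_i$ recover the bundles $\cJ X$ (or its sub-bundles if some $\eta_i$ coincide); fixing a smooth local orthonormal frame on $\mU$ and using the algebraic Clifford identities of Remark~\ref{rem:climpliesos} to consistently define the operators on this frame then yields smooth anticommuting almost Hermitian $J_i$. With the Clifford part of $R$ now smooth, the symmetric operator field $\rho$ is defined by subtraction from the smooth curvature tensor of $M^{16}$, giving \eqref{eq:confcs} with all ingredients smooth on $\mU$.

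In case (b), the same spectral analysis on $W_X$ yields a smooth $8$-dimensional eigendistribution $\Span_{i=0}^8(S_iX)\ominus\br X$ and a smooth positive ``octonionic curvature'' scalar $\kappa$ extracted from the eigenvalues. Lemma~\ref{l:mA}(4) allows one to read off the symmetric operators $S_i$ from this distribution via a local gauge choice, yielding smooth $S_i$ satisfying \eqref{eq:SiSj}. Rescaling the metric conformally by a function built from $\kappa$ normalizes the octonionic part of $R$ and brings the curvature of the rescaled metric into the form \eqref{eq:op2ctc}, with $\rho$ defined by subtraction (so smooth) and $\ve=\pm 1$ the (locally constant) sign of $\kappa$. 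The main obstacle throughout is the smooth extraction of the structure operators $J_i$ (respectively $S_i$): pointwise each is determined only up to an orthogonal gauge (signs and reordering for the $J_i$, an $\mathrm{O}(9)$-rotation for the $S_i$), and one must ensure a smooth local section of the associated principal bundle on $\mU$. This is a standard local triviality argument for principal bundles and offers no genuine obstruction on a sufficiently small $\mU$.
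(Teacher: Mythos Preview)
Your outline has the right shape, but there are two places where the argument is genuinely incomplete and where the paper does substantially more work.

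\textbf{Separating the three types when the multiplicity pattern is $\{7,8\}$.} You distinguish the Clifford and Cayley cases by saying the structure operators are skew-symmetric versus symmetric, and that this is ``a continuous algebraic feature''. But the structure operators are not given to you a priori; they have to be extracted from $W$, and the question is precisely whether that extraction is well-defined and continuous. Moreover, you have not separated the two Clifford subcases from each other: with all $\eta_i$ equal, both $\Cliff(7)$ and $\Cliff(8)$ produce the unordered pattern $\{7,8\}$, but in the first case the $7$-dimensional eigenspace is $\Span(J_iX)$ while in the second it is the $8$-dimensional one. Your skew/symmetric criterion does not distinguish these. The paper handles this by showing that the three subsets $M^{(7)}, M^{(8)}, M^{(\Oc)}$ are pairwise \emph{disjoint} (for $M^{(7)}\cap M^{(8)}$ this needs a nontrivial argument: it would force fifteen anticommuting almost Hermitian structures on $\Rs$, contradicting $\nu\le 8$; for Clifford versus Cayley one uses that $R^{\Oc}$ has no Clifford structure) and each is \emph{relatively closed} in $M_\a$ (via a compactness/limit argument on the orthogonal operators $J_i$ or $S_i$). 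Connectedness then forces $M_\a$ to coincide with exactly one of them.

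\textbf{Smooth extraction of the structure operators.} Calling this ``a standard local triviality argument'' understates what is required. In the Clifford case, what is smooth a priori is only the eigendistribution $\Span_{j:\la_0+3\eta_j=\mu_i}(J_jX)$ of $W_{X|X^\perp}$; passing from this to smooth anticommuting almost Hermitian $J_j$'s on $\mU$ is exactly the content of a lemma the paper cites from \cite{Nco}. In the Cayley case your sketch is further from a proof: knowing the $8$-dimensional eigendistribution of $W_X$ for each $X$ does not directly hand you smooth symmetric $S_i$. The paper instead first shows $f$ is smooth (from $\|W\|^2=\tfrac{32256}{5}f^2$), hence $\rho$ is smooth, hence the field of endomorphisms $\mA$ of the bundle $\Sk(M_\a)$ is smooth; then the eigenbundles $\mL_2,\mL_3$ of $\mA$ are smooth, and by forming products $(\mL_2)^2$ and $(\mL_4)^2$ and using the orthogonal decomposition of assertion~1 of Lemma~\ref{l:mA} one deduces that $\mL_1$ is a smooth subbundle, from which smooth orthonormal sections $S_i'$ are chosen. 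Only then does the conformal rescaling by $|f|$ give \eqref{eq:op2ctc}. This algebraic route via $\mA$ and its eigenbundles is the substantive content you are missing.
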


\begin{proof}
On every connected component $M_\a \subset M'$, the number $N=N(x)$ is a constant, so the operator $W_{X|X^\perp}$,
where $X$ is a unit vector, has exactly $N$ distinct eigenvalues $\mu_0, \mu_1, \dots, \mu_{N-1}$, with the
multiplicities $m_0, m_1, \dots, m_{N-1}$. The functions $\mu_i$'s are smooth on $M_\a$, and the $m_i$'s are
constants, by the smoothness of the characteristic polynomial of $W_{X|X^\perp}$. We label them in such a way that
$m_0=\max(m_0,m_1,\dots, m_{N-1})$.

Clearly, $\sum_{i=0}^{N-1} m_i=15$ and, as $\Tr W_X=0$, we have $\sum_{i=0}^{N-1} m_i\mu_i=0$. It follows that if
$N=1$, then $W=0$, so $M_\a$ is conformally flat. Then by \eqref{eq:defWeyl}, the curvature tensor has the form
\eqref{eq:confcs}, with $\nu=0$ and a smooth $\rho$.

Suppose $N > 2$. By Conjecture A, $W$ either has a Clifford structure, or a Cayley structure. But in the latter case,
the operator $W_{X|X^\perp}$ has two distinct eigenvalues (from assertion~4 of Lemma~\ref{l:mA}).
It follows that $W$ has a Clifford structure $\Cliff (\nu)$, at every point of $M_\a$ ($\nu$ may a priori depend on
$x \in M_\a$). By assertion~1 of Lemma~\ref{l:reps}, $\nu \le 8$, and by Remark~\ref{rem:climpliesos},
for a unit vector $X$, the eigenvalues of $W_{X|X^\perp}$ are $\la_0$, of multiplicity $15-\nu$, and
$\la_0+3\eta_i, \; i=1, \dots, \nu$. All of the $\eta_i$'s are nonzero (by Definition~\ref{d:cl}), some of them can
be equal, but not all, as otherwise $N=2$, so the multiplicity of every eigenvalue $\la_0+3\eta_i$ is at most
$\nu-1 \le 15 -\nu$, as $\nu \le 8$. It follows that the maximal multiplicity is $m_0 = 15 - \nu \, (\ge 7)$, so
$\nu = 15-m_0$, which is a constant on $M_\a$. Moreover, $\la_0=\mu_0$ (this is automatically satisfied, unless
$\nu=8$ and $\eta_1= \dots =\eta_7 \ne \eta_8$; in the latter case we have two eigenvalues of multiplicity $7$ and
we choose the labeling of the $\mu_i$'s so that $\mu_0=\la_0$). The functions $\la_0$ and $\la_0+3\eta_i$ are smooth,
as each of them equals one of the $\mu_i$'s. Moreover, for every smooth unit vector field $X$ on $M_\a$ and every
$i=1, \dots, N-1$, the $\mu_i$-eigendistribution of $W_{X|X^\perp}$ (which must be smooth on $M_\a$ and must have a
constant dimension $m_i$) is $\Span_{j:\la_0+3\eta_j=\mu_i}(J_jX)$, by Remark~\ref{rem:climpliesos}.
By assertion~3 of Lemma~3 of \cite{Nco}, there exists a neighborhood $\mU_i(x)$ and smooth anticommuting almost
Hermitian structures $J_j'$ (for the $j$'s such that $\la_0+3\eta_j=\mu_i$) on $\mU_i(x)$ such that
$\Span_{j:\la_0+3\eta_j=\mu_i}(J_jX)=\Span_{j:\la_0+3\eta_j=\mu_i}(J_j'X)$. Let $W'$ be a (unique)
algebraic curvature tensor on $\mU= \cap_{i=1}^{N-1}\mU_i(x)$ with the Clifford structure
$\Cliff(\nu; J'_1,\dots, J'_\nu; \la_0, \eta_1, \dots , \eta_\nu)$. Then $\nu=15-m_0$ is constant and all
the $J'_i, \eta_i$ and $\la_0$ are smooth on $\mU$. Moreover, for every unit vector field $X$ on
$\mU$, the Jacobi operators $W'_X$ and $W_X$ have the same eigenvalues and eigenvectors by construction, hence
$W'_X=W_X$, which implies $W'=W$. Then the curvature tensor on $\mU$ has
the form \eqref{eq:confcs}, with the operator $\rho$ given by
$\rho=\frac{1}{n-2}\operatorname{Ric}+(\frac12 \la_0-\frac{\operatorname{scal}}{2(n-1)(n-2)})\id$, by
\eqref{eq:defWeyl}. As $\la_0$ is a smooth function, the operator field $\rho$ is also smooth.

Now consider the case $N=2$. Again, by Conjecture A, $W$ either has a Clifford structure, or a Cayley structure.
In the former case, by
Remark~\ref{rem:climpliesos}, for a unit vector $X$ at every point $x \in M_\a$, the eigenvalues of $W_{X|X^\perp}$
are $\la_0$, of multiplicity $15-\nu$, and $\la_0+3\eta, \;\eta \ne 0$, of multiplicity $\nu$. In the latter case,
there are two eigenvalues, of multiplicities $m_0=8$ and $m_1=7$, respectively (as it follows from assertion~4 of
Lemma~\ref{l:mA}).
In the both cases, $m_0 \ge 8$. It follows that if $m_0 >8$, the Weyl tensor $W$ has a Clifford structure
$\Cliff(m_1)$, at every point $x \in M_\a$. Then we can finish the proof as in the case $N>2$ considered above,
as on $M_\a$, the functions $\la_0=\mu_0$ and $\la_0+3\eta=\mu_1$ are smooth and $\Span_{j=1}^{m_1}(J_jX)$, the
$\mu_1$-eigendistribution of $W_{X|X^\perp}$, is smooth and has a constant dimension $m_1$. Assertion~3 of
Lemma~3 of \cite{Nco} applies and we obtain the curvature tensor of the form \eqref{eq:confcs} (with $\nu=m_1$ and
all the $\eta_i$'s equal) on some neighborhood $\mU=\mU(x)$ of an arbitrary point $x \in M_\a$.

Suppose now that $N=2$, and $m_0=8, \, m_1=7$. Then at every point $x \in M_\a$, the Weyl tensor either has a Clifford
structure $\Cliff(7)$, with $\eta_1=\dots=\eta_7=\eta$, or a Clifford structure $\Cliff(8)$, with
$\eta_1=\dots=\eta_8=\eta$, or a Cayley structure. Denote $M^{(7)},M^{(8)}$ and $M^{(\Oc)}$
the corresponding subsets of $M_\a$, respectively. These three subsets are mutually disjoint. Indeed, if
$W=a R^{\Oc}+b R^{\mathbb{S}}, \; a \ne 0$, would have a Clifford structure, then the same would be true for
$R^{\Oc}=a^{-1}W-ba^{-1}R^{\mathbb{S}}$ (as by Definition~\ref{d:cl}, the set of algebraic curvature tensors with a
Clifford structure is invariant under scaling and shifting by a constant curvature tensor). This contradicts the fact
that $R^{\Oc}$ has no Clifford structure \cite[Remark~1]{Nma}. Moreover, if $x \in M^{(7)} \cap M^{(8)}$, then for any
unit vector $X \in T_xM^{16}$ the operator $W_{X|X^\perp}$ has an eigenspace of dimension $7$ spanned by orthonormal
vectors $J_1X, \dots, J_7X$ (where the $J_i$ are defined by the Clifford structure $\Cliff(7)$), and the orthogonal
eigenspace of dimension $8$ spanned by orthonormal vectors $J_1'X, \dots, J_8'X$ (where the $J_j'$ are defined by
$\Cliff(8)$). Then the fifteen operators $J_i, J_j'$ are anticommuting almost Hermitian structures on $\Rs$, which
contradicts the fact that $\nu \le 8$ (assertion~1 of Lemma~\eqref{l:reps}). It follows that
$M_\a=M^{(7)}\sqcup M^{(8)}\sqcup M^{(\Oc)}$. Moreover, each of the three subsets $M^{(7)}, M^{(8)}, M^{(\Oc)}$ is
relatively closed in $M_\a$. Indeed, suppose a sequence $\{x_n\} \subset M^{(7)}$ converges to a point $x^0 \in M_\a$.
Then the functions $\la(x_n)$ and $\la(x_n)+\eta(x_n)$ are well-defined (as $M^{(7)},M^{(8)},M^{(\Oc)}$ are disjoint)
and bounded (as $\la^2(x_n)+(\la(x_n)+\eta(x_n))^2=\mu_0(x_n)^2+\mu_1(x_n)^2$), so we can assume that both sequences
converge by choosing a subsequence; moreover, as all the $J_i(x_n)$ are orthogonal operators, we can choose a
subsequence such that all of them converge. Then by continuity, $W(x^0)$ has the form \eqref{eq:cs}, with $\nu=7$,
with the $J_i$'s being anticommuting almost Hermitian structures, and with $\eta \ne 0$, as otherwise $N=1$, which
contradicts the fact that $N=2$ on $M_\a$. Therefore, $x^0 \in M^{(7)}$. The above arguments work for $M^{(8)}$
almost verbatim (by replacing $7$ by $8$), and for $M^{(\Oc)}$, with a slight modification (by replacing the
orthogonal operators $J_i$'s by the orthogonal operators $S_i$ from \eqref{eq:Rop2}). Hence, as $M_\a$ is connected,
it coincides with exactly one of the sets $M^{(7)},M^{(8)},M^{(\Oc)}$.

Now, if $M_\a=M^{(7)}$ or if $M_\a=M^{(8)}$, the proof follows from the same arguments as in the case $N>2$: we have
a Clifford structure for $W$ with a constant $\nu$.

Suppose $M_\a=M^{(\Oc)}$. Then by (\ref{eq:Rop2}, \ref{eq:defWeyl}), the Weyl tensor on $M_\a$ has the form
$W(X,Y)=b X \wedge Y + a \sum\nolimits_{i=0}^8 S_i X \wedge S_i Y$, with $a \ne 0$ (actually, $5b=3a \ne 0$, as
$\Tr W_X=0$; see \eqref{eq:model2}), so by \eqref{eq:defWeyl}, the curvature tensor at every point $x \in M_\a$ has
the form
\begin{equation}\label{eq:op2ct}
    R(X,Y)=\rho X \wedge Y - \rho Y \wedge X + f \sum\nolimits_{i=0}^8 S_i X \wedge S_i Y,
\end{equation}
where $\rho$ is a symmetric operator and $f \ne 0$ (as $N=2$ on $M_\a$). As $S_i^2=\id$ (by \eqref{eq:SiSj}) and
$\Tr S_i=0$ (see the proof of assertion~1 of Lemma~\ref{l:mA}), at every point $x \in M_\a$, the Ricci operator,
the scalar curvature and the Weyl tensor of $M^{16}$ are given by
$\operatorname{Ric} X=14\rho X +(\Tr\rho - 9f)X,  \; \operatorname{scal}=30\Tr\rho - 144f, \;
W(X,Y)=\frac{1}{5}f(3 X \wedge Y + 5 \sum\nolimits_{i=0}^8 S_i X \wedge S_i Y$, respectively.
A direct computation using \eqref{eq:SiSj} and the fact that the $S_i$'s are symmetric,
orthogonal and $\Tr S_i=0$ gives $\|W\|^2=\frac{32256}{5}f^2$ (see \eqref{eq:model3}). As $f \ne 0$,
it follows that $f$ is a smooth function (hence $\rho$ is a smooth symmetric operator, as $\operatorname{scal}$ and
$\operatorname{Ric}$ are smooth) on $M_\a$. Introduce an algebraic curvature tensor $P$ defined by
$P(X,Y)=f^{-1}(R(X,Y)-\rho X \wedge Y + \rho Y \wedge X)=\sum_{i=0}^8 S_i X \wedge S_i Y$ $=\mA(X \wedge Y)$,
where the last equation follows from assertion~3 of Lemma~\ref{l:mA}.
As $P$ is smooth, the field $\mA$ of the endomorphisms of the bundle $\Sk(M_\a)$ of skew-symmetric endomorphisms
over $M_\a$ is smooth (the fact that $\mA$ is an endomorphism of the bundle $\Sk(M_\a)$ follows from assertion~2 of
Lemma~\ref{l:mA}). Then the eigenbundles of $\mA, \; \mL_2(M_\a)$ and $\mL_3(M_\a)$ (assertion~2 of Lemma~\ref{l:mA})
are also smooth. As the matrix product is smooth, it follows that the subbundle
$(\mL_2(M_\a))^2=\Span(K_1K_2, \,K_1, K_2 \in \mL_2(M_\a))$ is smooth. By the definition of $\mL_2$ and from
\eqref{eq:SiSj}, $(\mL_2(M_\a))^2=\mL_0(M_\a) \oplus \mL_2(M_\a) \oplus \mL_4(M_\a)$, with the direct sum being
orthogonal relative to the (smooth) inner product in $\End(M_\a)$ (assertion~1 of Lemma~\ref{l:mA}). Since
$\mL_2(M_\a)$ is smooth and $\mL_0(M_\a)$ is a one-dimensional bundle spanned by the identity operator, the bundle
$\mL_4(M_\a)$ is smooth. Then the bundle
$(\mL_4(M_\a))^2= \bigoplus_{s=0}^4 \mL_{2s}(M_\a)=\bigoplus_{s=0}^4 \mL_{s}(M_\a)$ (the latter equation follows from
assertion~1 of Lemma~\ref{l:mA}) is also smooth. The direct sum on the right-hand side is orthogonal with respect to
the smooth inner product and the bundles $\mL_s(M_\a), \; s=0,2,3,4$, are smooth, as it is shown above.
Hence $\mL_1(M_\a)$
is a smooth subbundle of $\End(M_\a)$. It follows that on some neighborhood $\mU(x)$ of an arbitrary point $x\in M_\a$
we can choose nine smooth sections $S'_i, \; i=0,1, \dots, 8$, of $\mL_1(M_\a)$, which are orthogonal and all have
norm $4$. By assertion~2 of Lemma~\ref{l:mA}, the operator $\mA$ does not change, if we replace $S_i$ by $S'_i$,
so by assertion~3 of Lemma~\ref{l:mA}, the curvature tensor \eqref{eq:op2ct} remains unchanged if we replace
the operators $S_i$ by the smooth operators $S'_i$. Therefore we can assume $f, \rho$ and the $S_i$'s in
\eqref{eq:op2ct} to be smooth on $\mU$.
Under a conformal change of metric $\tilde g =h\<\cdot,\cdot\>$, for a positive smooth function
$h=e^{2\phi}: \mU \to \br$, the curvature tensor transforms as $\tilde R(X,Y)=R(X,Y)-(X \wedge KY + KX \wedge Y)$,
where $K=H(\phi)-\n\phi \otimes \n\phi + \tfrac12 \|\nabla \phi\|^2 \id$ and $H(\phi)$ is the symmetric
operator associated to the Hessian of $\phi$ (see Lemma~\ref{l:mmeps}). As $X \tilde\wedge Y=h X \wedge Y$ we obtain
$\tilde R(X,Y)=\tilde \rho X \tilde \wedge Y - \tilde \rho Y \tilde \wedge X +
fh^{-1} \sum_{i=0}^8 S_i X \tilde \wedge S_i Y$, where $\tilde \rho = h^{-1}(\rho-K)$. Taking $h=|f|$
(and dropping the tildes) we obtain \eqref{eq:op2ctc}, with $\ve=\pm 1 = \operatorname{sgn}(f)$.
\end{proof}

\subsection{Clifford case}
\label{ss:cc}

Let $x\in M'$ and let $\mathcal{U}=\mathcal{U}(x)$ be the neighborhood of $x$ defined in assertion~\eqref{it:cl} of
Lemma~\ref{l:locc1}.
By the second Bianchi identity, $(\n_UR)(X,Y) Y + (\n_YR)(U,X) Y+(\n_XR)(Y,U) Y = 0$. Substituting $R$ from
\eqref{eq:confcs} and using the fact that the operators $J_i$'s and their covariant derivatives are skew-symmetric and
the operator $\rho$ and its covariant derivatives are symmetric we get:
\begin{equation} \label{eq:confBZY}
\begin{split}
&\<X,Y\> ((\n_U \rho)Y - (\n_Y \rho)U)+\|Y\|^2 ((\n_X \rho)U-(\n_U \rho) X) +\<U,Y\> ((\n_Y \rho)X-(\n_X \rho) Y)\\
&+\<(\n_Y \rho)U-(\n_U \rho)Y, Y\> X +\<(\n_X \rho)Y-(\n_Y \rho)X, Y\> U +\<(\n_U \rho)X-(\n_X \rho)U, Y\> Y \\
&+ \sum\nolimits_{i=1}^\nu 3(X(\eta_i) \< J_iY, U \> - U(\eta_i) \< J_iY, X \>)J_iY  \\
&+ \sum\nolimits_{i=1}^\nu Y(\eta_i) (2 \< J_iU, X \> J_iY + \< J_iY, X \> J_iU - \< J_iY, U \> J_iX) \\
&+ \sum\nolimits_{i=1}^\nu \eta_i
\bigl((3 \< (\n_UJ_i)X, Y \> + 3 \< (\n_XJ_i)Y, U \> + 2 \< (\n_YJ_i)U, X \>)J_iY\\
&+3 \< J_iX, Y \> (\n_UJ_i) Y + 3 \< J_iY, U \> (\n_XJ_i) Y + 2 \< J_iU, X \> (\n_YJ_i) Y\\
&+ \< (\n_YJ_i) Y, X \> J_iU + \< J_i Y, X \> (\n_YJ_i) U
- \< (\n_YJ_i) Y, U \> J_iX - \< J_i Y, U \> (\n_YJ_i) X \bigr)=0.
\end{split}
\end{equation}
Taking the inner product of \eqref{eq:confBZY} with $X$ and assuming $X, Y$ and $U$ to be orthogonal we obtain
\begin{equation} \label{eq:confBZYX}
\begin{split}
&\|X\|^2 \<Q(Y),U\> +\|Y\|^2 \<Q(X),U\>-\<Q(X),Y\>\<Y,U\>-\<Q(Y),X\>\<X,U\>\\
+& \sum\nolimits_{i=1}^\nu 3(X(\eta_i) \< J_iY, U \> - Y(\eta_i) \<J_iX,U\> - U(\eta_i) \<J_iY,X\>)\<J_iY,X\> \\
+& \sum\nolimits_{i=1}^\nu 3 \eta_i
\bigl((2 \< (\n_UJ_i)X, Y \> + \< (\n_XJ_i)Y, U \> + \< (\n_YJ_i)U, X \>)\<J_iY,X\>\\
& \hphantom{\sum\nolimits_{i=1}^\nu \la_i (}
 - \< J_iY, U\> \<(\n_XJ_i)X, Y\> - \< J_iX, U\> \<(\n_YJ_i) Y,X\>\bigr)=0,
\end{split}
\end{equation}
where $Q: \Rs \to \Rs$ is the quadratic map defined by
\begin{equation}\label{eq:defQ}
\<Q(X), U\>=\<(\n_X \rho)U-(\n_U \rho) X,X\>.
\end{equation}
Note that $\<Q(X), X\> = 0$.

\begin{lemma}\label{l:nablarho}
In the assumptions of Lemma~\ref{l:locc1}, let $x \in M'$ and let $\mU$ be the neighborhood of $x$ introduced in
assertion~\eqref{it:cl} of Lemma~\ref{l:locc1}. Suppose that $\nu >4$. For every point $y \in \mU$, identify
$T_yM^{16}$ with the Euclidean space $\Rs$ via a linear isometry. Then
\begin{enumerate}[1.]
  \item
There exist $m_i, b_{ij} \in \Rs, \; i,j =1, \dots, \nu$, such that for all $X \in \Rs$ and all
$i,j =1, \dots, \nu$,
\begin{subequations}\label{eq:lnablarhoi}
\begin{gather} \label{eq:Qsum}
Q(X)=3 \sum\nolimits_{k=1}^\nu \<m_k,X\>J_kX, \\
\label{eq:nXJX}
    (\n_X J_i)X = \eta_i^{-1}(\|X\|^2 m_i-\<m_i,X\>X)+\sum\nolimits_{j=1}^\nu \<b_{ij},X\>J_jX,\\
\label{eq:bijbji}
    b_{ij}+b_{ji}=\eta_i^{-1}J_jm_i+\eta_j^{-1}J_im_j.
\end{gather}
\end{subequations}

  \item
  The following equations hold for all $X, Y \in \Rs$ and all $i,j =1, \dots, \nu$:
\begin{subequations}\label{eq:lnablarhoii}
\begin{gather}
\label{eq:neta}
    \nabla \eta_i = 2 J_im_i,\\
\label{eq:JiJkY}
\eta_i b_{ij} + \eta_j b_{ji}=0, \quad i \ne j, \\
\label{eq:skewrho}
(\n_Y \rho)X-(\n_X \rho)Y=\sum\nolimits_{i=1}^\nu (2\<J_iY,X\>m_i - \<m_i,Y\>J_iX+\<m_i,X\>J_iY),\\
\label{eq:bijne}
b_{ij} (\eta_i - \eta_j)=0, \\
\label{eq:allequal}
J_i m_i = \eta_i p, \quad \text{for some $p \in \Rs$}.
\end{gather}
\end{subequations}
\end{enumerate}
\end{lemma}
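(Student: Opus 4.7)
The plan is to combine the contracted second Bianchi identity~\eqref{eq:confBZYX} with the structural description of Clifford modules on $\Rs$, using Lemma~\ref{l:octlemma}.1 as the principal engine for recognising quadratic maps $\Rs\to\Rs$ whose components along the $J_kX$-directions are divisible by $\|X\|^2$. Fix $y\in\mU$ and identify $T_yM^{16}$ with $\Rs$; when $\nu<8$ I would tacitly extend $\{J_1,\dots,J_\nu\}$ to a full Clifford family via assertion~1 of Lemma~\ref{l:reps}, while in the exceptional case $\rho=\pm\rho_7$ the same argument runs with seven generators throughout. The assumption $\nu>4$ is what furnishes enough independent $J_k$-directions to disentangle the various contributions to~\eqref{eq:confBZY}.

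For assertion~1, I would first pin down $Q$. Note $\<Q(X),X\>=0$ by~\eqref{eq:defQ}, while $\nabla_XJ_i$ is skew-symmetric and anticommutes with $J_i$ (from $J_i^2=-\id$), so that $\<(\nabla_XJ_i)X,X\>=\<(\nabla_XJ_i)X,J_iX\>=0$. Substituting $U=J_kX$ in~\eqref{eq:confBZYX} and examining those components of the resulting $Y$-polynomial that are not automatically divisible by $\|Y\|^2$, the cubic $\<Q(X),J_kX\>$ is shown to be divisible by $\|X\|^2$ for every $k$. Lemma~\ref{l:octlemma}.1 then forces $Q(X)=J_{A(X)}X+\<V,X\>X-V\|X\|^2$ (the two trailing vectors being equalised by $\<Q(X),X\>=0$); matching the $\|X\|^2Y$-coefficients in~\eqref{eq:confBZYX} eliminates $V$ and delivers~\eqref{eq:Qsum}. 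An entirely parallel divisibility argument applied to the quadratic forms $N_i(X):=(\nabla_XJ_i)X$, followed by matching the explicit $\|X\|^2$-coefficient in~\eqref{eq:confBZY} against $\eta_i^{-1}m_i$, yields~\eqref{eq:nXJX}. Relation~\eqref{eq:bijbji} is then purely algebraic: differentiate $J_iJ_j+J_jJ_i=-2\delta_{ij}\id$ along $X$, evaluate on $X$, and substitute~\eqref{eq:nXJX}; the symmetrisation in $(i,j)$ of the resulting cubic $X$-terms is exactly~\eqref{eq:bijbji}.

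For assertion~2, I would substitute~\eqref{eq:lnablarhoi} back into the un-contracted identity~\eqref{eq:confBZY} and match coefficients of the independent tensorial monomials in $Y$ and $U$. The coefficient of $\|X\|^2J_iY$ reads off $X(\eta_i)=2\<J_im_i,X\>$, i.e.\ \eqref{eq:neta}; the coefficient of $\<J_iY,U\>X$ combined with~\eqref{eq:Qsum} produces~\eqref{eq:skewrho} after polarising in $X$. The coefficients proportional to $\<J_jY,X\>J_iY$ and $\<J_iY,U\>J_jX$ (for $i\ne j$) supply respectively $\eta_ib_{ij}+\eta_jb_{ji}=0$ and $(\eta_i-\eta_j)b_{ij}=0$, which are~\eqref{eq:JiJkY} and~\eqref{eq:bijne}. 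Combining the former with~\eqref{eq:bijbji} gives $\eta_jJ_jm_i+\eta_iJ_im_j=0$ for $i\ne j$; applying $J_i$ and using $\{J_i,J_j\}=0$ then yields $\eta_i^{-1}J_im_i=\eta_j^{-1}J_jm_j$, and the common value is the vector $p$ in~\eqref{eq:allequal}.

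The principal obstacle is the first phase: extracting the $\|X\|^2$-divisibility of the cubic polynomials $\<Q(X),J_kX\>$ and $\<N_i(X),J_kX\>$ from~\eqref{eq:confBZYX}, where $Q$, $\nabla\eta_i$ and $\nabla J_i$ all contribute simultaneously. The hypothesis $\nu>4$ is precisely what separates these contributions in the polynomial identities, and the elimination of the spurious $V$-term from the output of Lemma~\ref{l:octlemma}.1 requires a second, careful pass through the Bianchi identity and is the most delicate bookkeeping step.
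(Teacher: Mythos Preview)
Your overall strategy for Part~1 --- reduce to divisibility of certain cubics by $\|X\|^2$ and then invoke Lemma~\ref{l:octlemma} --- is the same as the paper's, but the step you treat as routine is precisely the hard part, and your description contains a genuine gap.

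The divisibility of $\langle Q(X),J_kX\rangle$ and of $\langle(\nabla_XJ_i)X,J_kX\rangle$ by $\|X\|^2$ does \emph{not} follow from substituting $U=J_kX$ into \eqref{eq:confBZYX} and ``examining components''. That identity couples $Q$, the $\nabla\eta_i$, and all the $(\nabla J_i)$ simultaneously, and no real substitution disentangles them. The paper instead first specialises \eqref{eq:confBZYX} to $Y\perp\cI X$ to obtain the cleaner identity \eqref{eq:8c}, then complexifies and works on the isotropic cone $\|X\|^2=0$ with $Y=J_qX$. This produces a bioctonion-valued operator $M_X(q)=\sum_i\eta_i\langle(\nabla_XJ_i)X,J_qX\rangle e_i$, and a genuinely nontrivial argument over $\Oc\otimes\bc$ (solving octonionic linear systems, rank considerations over unique factorisation domains) is needed to show $M_X\equiv 0$ on the cone. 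Only then does Lemma~\ref{l:octlemma} apply to $(\nabla_XJ_i)X$; the form of $Q$ is derived afterwards by substituting back into \eqref{eq:8c}. Your order (first $Q$, then $N_i$) is reversed, and your mechanism for obtaining the divisibility is missing.

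A second gap concerns the case split. Your ``tacit extension'' of $\{J_1,\dots,J_\nu\}$ to eight generators works only when the representation is a restriction of $\rho_8$; in the exceptional case $\rho=\pm\rho_7$ (assertion~2 of Lemma~\ref{l:reps}) no such extension exists, and Lemma~\ref{l:octlemma}.1, whose hypothesis is the $\rho_8$ structure \eqref{eq:Jp}, does not apply. The paper runs a separate and substantially different argument in that case, ultimately reducing to the rather specific hypothesis of Lemma~\ref{l:octlemma}.2; saying ``the same argument runs with seven generators'' is not adequate.

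For Part~2 your outline is closer to the paper, but ``match coefficients of independent tensorial monomials'' oversells the ease: the paper obtains \eqref{eq:neta} and \eqref{eq:JiJkY} via the targeted substitution $X=J_kY$ in \eqref{eq:confBZYX}, and \eqref{eq:bijne} requires constructing a spanning set of test pairs $(X,Y)$ with $X,J_kX,J_sX\perp\cJ Y$, which for $\nu=8$ already needs an explicit octonionic computation.
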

\begin{proof}

1. Equation \eqref{eq:confBZYX} is a polynomial equation in $48$ real variables, the coordinates of the vectors
$X, Y, U$. It must still hold if we allow $X, Y, U$ to be complex and extend the $J_i$'s, the $\n J_i$'s and
$\<\cdot , \cdot \>$ by complex linearity (bilinearity) to $\bc^{16}$. The complexified inner product
$\<\cdot,\cdot\>$ is a nonsingular quadratic form on $\bc^{16}$ (not a Hermitian inner
product on $\bc^{16}$).

From \eqref{eq:confBZYX}, for any two vectors $X, Y \in \bc^{16}$ with $Y \perp \IC X$, we get
\begin{multline} \label{eq:8c}
\|X\|^2 Q(Y) +\|Y\|^2 Q(X)-\<Q(X),Y\>Y-\<Q(Y),X\>X\\
-\sum\nolimits_{i=1}^\nu 3 \eta_i \bigl(\<(\n_XJ_i)X, Y\> J_iY + \<(\n_YJ_i) Y,X\> J_iX\bigr)=0.
\end{multline}

By assertion~2 of Lemma~\ref{l:reps}, the Clifford structure has one of two possible forms.
We prove identities \eqref{eq:lnablarhoi} separately for each of them.

\smallskip

\textbf{Case (a)} The representation of $\Cl(\nu)$ is a restriction to $\Cl(\nu) \subset \Cl(8)$ of the
representation $\rho_8$ of $\Cl(8)$ given by \eqref{eq:Jp}.

By complexification, we can assume that $J_pX$ is given by equation \eqref{eq:Jp}, where $X=(a,b) \in \bc^{16}$ and
$a, b, p \in \OC$. Denote $\cp = \{X=(a, b)\, : \, \|X\|^2=0\} \subset \bc^{16}$ the isotropic cone, and
for $X \in \bc^{16}$, denote $\IC^9 X$ the complex linear span of $X, J_1X, \dots, J_8X$. For $X \in \cp$, the space
$\IC^9 X$ is isotropic: the inner product of any two vectors from $\IC^9 X$ vanishes. Take $Y = J_q X \in \IC^9 X$,
with some $q \in \OC$. Then from \eqref{eq:8c} we obtain:
$\sum\nolimits_{i=1}^\nu \eta_i \<(\n_XJ_i)X, J_q X\> J_iJ_q X \in \IC^9 X$. Introduce the ($\bc$-)linear operator
$M_X: \OC \to \OC$ by $M_X(q) = \sum_{i=1}^\nu \eta_i \< (\n_XJ_i)X, J_q X \> e_i$. Then, as $\IC^9 X$ is isotropic,
we get $J_{M_X(q)}J_qX \perp \IC^9 X$, for all $X \in \cp$ and all $q \in \OC$. Then by \eqref{eq:Jp}, for any
$X = (a, b) \in \cp$ and any $p \in \OC$ we obtain $((- a q^*) M_X(q), - (b q)(M_X(q))^* \perp (b p, - a p^*)$, so
\begin{equation}\label{eq:mxq1}
    (M_X(q)(b q)^*) a = b^*((a q^*) M_X(q)).
\end{equation}
The bioctonion equation $(m (b q)^*) a - b^*((a q^*) m)=0$ for $m \in \OC$, with $(a, b, q)$ on the algebraic surface
$\mS = \cp \times \bc^8 \subset \bc^{24}$, can be viewed as a system of eight linear equations for
$m \in \bc^8$. Let $\mathcal{M}(a,b,q)$ be the matrix of this system. As both $m = q$ and $m = b^*a$ are solutions to
the system, $\rk \mathcal{M}(a,b,q) \le 6$, for all the points $(a, b, q)$ from a nonempty Zariski open subset of
$\mS$. On the other hand, if $a=q=1$, $b \perp 1, \, \|b\|^2 = -1$, the equation has the form $m b^* = b^* m$,
which implies that $m \in \Span_{\bc}(1, b)$, so $\rk \mathcal{M}(a,b,q) \ge 6$ for all the points $(a,b,q)$ from a
nonempty Zariski open subset of $\mS$. It follows that for a nonempty Zariski open subset of $\mS$,
$\rk \mathcal{M}(a,b,q) = 6$ and the solution set is $\Span_{\bc}(q, b^*a)$. Therefore from \eqref{eq:mxq1},
$M_X(q) \in \Span_{\bc}(q, b^*a)$, for all $(a,b,q)$ from a nonempty Zariski open subset of the $\mS$, hence
$\rk (q, M_X q, b^*a) \le 3$, for all $X = (a, b) \in \cp, \; q \in \bc^8$. It follows that the linear operators from
$\bc^8$ to $\Sk(\bc^8)$ defined by $q \to M_Xq \wedge (b^*a)$ and $q \to q \wedge (b^*a)$ are linearly dependent,
for every $X = (a, b) \in \cp$, so $M_Xq \wedge (b^*a) = c_X q \wedge (b^*a)$, for $c_X \in \bc$. Then
$(M_Xq-c_X q)\wedge (b^*a) = 0$, so $M_X(q)=c_X q + \a_X(q) b^*a$, for all $X=(a, b) \in \cp$ such that $b^*a \ne 0$,
where $\a_X$ is a linear form on $\bc^8$. By the definition of
$M_X(q), \; c_X q + \a_X(q)b^*a = \sum_{i=1}^\nu \eta_i \< (\n_XJ_i)X, J_q X \> e_i$.
Substituting $q=e_j, \; j \le \nu$, and taking the inner product with $e_k, \; k \le \nu$, we obtain
$\eta_k^{-1} (c_X \K_{jk} + \a_X(e_j)\<b^*a,e_k\>) = \<(\n_XJ_k)X, J_j X \>$. As the right-hand side is antisymmetric
in $j$ and $k$, we get
$2\eta_k^{-1} c_X \K_{jk} + \a_X(e_j) \cdot \eta_k^{-1}\<b^*a,e_k\>+ \a_X(e_k) \cdot \eta_j^{-1}\<b^*a,e_j\> = 0$. As
the rank of the $\nu \times \nu$-matrix $A_X$ defined by
$(A_X)_{jk}=\a_X(e_j) \cdot \eta_k^{-1}\<b^*a,e_k\> + \a_X(e_k) \cdot \eta_j^{-1}\<b^*a,e_j\>$
is at most two and as $\nu > 4$, we obtain that $c_X=0$, for all $X=(a, b) \in \cp$ such that $b^*a \ne 0$, hence
$A_X=0$. Now, if $\nu=8$, it follows from $A_X=0, \; b^*a \ne 0$, that $\a_X=0$, so $M_X = 0$. If $\nu < 8$, then, as
$c_X=0$, we have $M_X(q)=\a_X(q)b^*a = \sum_{i=1}^\nu \eta_i \< (\n_XJ_i)X, J_q X \> e_i$, so $\a_X(q)\<b^*a,e_s\>=0$,
for all $s >\nu$. Choosing $X=(a, b) \in \cp$ such that all the components of $b^*a$ are nonzero, we again obtain that
$\a_X=0$, so $M_X = 0$. It follows that $M_X = 0$ for all $X$ from a nonempty open subset of $\cp$, hence for all
$X \in \cp$.

From the definition of $M_X(q)$, it follows that for all $X \in \cp$ and all
$i=1,\dots,\nu, \; j=1, \dots, 8$, $\<(\n_X J_i)X, J_jX\> = 0$, so the polynomials $\<(\n_X J_i)X, J_jX\>$ are
divisible by $\|X\|^2$ over $\bc$, and hence over $\br$.
Then by assertion~1 of Lemma~\ref{l:octlemma}, $(\n_X J_i)X =\sum_{j=1}^8 \<b_{ij},X\>J_jX+ \<V_i, X\>X-\|X\|^2 U_i$,
for all $i=1,\dots,\nu$, where $b_{ij}, V_i, U_i$ are some vectors from $\Rs$ (note that the summation on the
right-hand side is up to $8$, not up to $\nu$, as in \eqref{eq:nXJX}). As $\<(\n_X J_i)X,X\>=0$, we have $V_i=U_i$,
so
\begin{equation}\label{eq:nJwith8}
(\n_X J_i)X =\sum\nolimits_{j=1}^8 \<b_{ij},X\>J_jX+ \eta_i^{-1}(\|X\|^2 m_i-\<m_i,X\>X),
\end{equation}
for some $m_i \in \Rs$. Substituting this into \eqref{eq:8c}, complexifying the resulting equation and taking
$X \in \cp$, $Y=J_qX$, with some $q \in \OC$ we get $\<Q(X),J_qX\>J_qX-\<Q(J_qX),X\>X=0$. As $X$ and $J_qX$ are
linearly independent for a nonempty Zariski open subset of $(X,q) \in \mS=\cp \times \bc^8$, the polynomial
$\<Q(X),J_qX\>$ vanishes on $\mS$, so for all $q \in \OC$, the polynomial $\<Q(X),J_qX\>$ is divisible by
$\|X\|^2$. Then $\<Q(X),J_iX\>$ is divisible by $\|X\|^2$, for all $i=1, \dots, 8$, which by assertion~1 of
Lemma~\ref{l:octlemma}, implies that $\<Q(X),Y\>=\|X\|^2\<Y,U\>$, for some fixed $U \in \Rs$, where $Y \perp \cI^9 X$.

It then follows from \eqref{eq:8c} and \eqref{eq:nJwith8} that for all $X, Y \in \Rs$, with $Y \perp \cI^9 X$,
\begin{equation} \label{eq:8cq}
\|X\|^2 T(Y)+\|Y\|^2 T(X)=0,
\end{equation}
where the quadratic form $T: \Rs \to \Rs$ is defined by $T(X)=Q(X)-\<X,U\>X -3 \sum_{i=1}^\nu \<m_i,X\> J_iX$.
We want to show that $T=0$. Suppose that for some $E \in \Rs$, the
quadratic form $t(X)=\<T(X),E\>$ is nonzero. Then from \eqref{eq:8cq}, $\|X\|^2 t(Y)+\|Y\|^2 t(X)=0$, for all
$X, Y \in \Rs,\; Y \perp \cI^9 X$. If $X=(a,0)$, $Y=(b,0)$, with $a \perp b$, then $Y \perp \cI^9 X$ by \eqref{eq:Jp},
so $\|a\|^2 t((b,0))+\|b\|^2 t((a,0))=0$ which implies $t((a,0))=0$, for all $a \in \Oc$. Similarly,
$t((0,b))=0$, for all $b \in \Oc$. It follows that $t((a,b))=\<La,b\>$ for some
$L \in \End(\Oc)$. From \eqref{eq:Jp}, any $X=(a,b), \; a, b \ne 0$, and any
$Y=(\|b\|^{-2}bu, \|a\|^{-2}au^*)$, with $u \perp b^*a$, satisfy $Y \perp \cI^9 X$. Then from
$\|X\|^2 t(Y)+\|Y\|^2 t(X)=0$ and $t((a,b))=\<La,b\>$ we obtain $\|u\|^2 \<La,b\>=\<L(bu),au^*\>$, for all
$u \perp b^*a$ (the condition $a,b \ne 0$ can be dropped). It follows that $\<\|u\|^2L^tb-L(bu)\cdot u,a\>=0$ for
all $a \perp bu$ (where $L^t$ is the operator transposed to $L$), so $\|u\|^2L^tb-L(bu)\cdot u \parallel bu$, for all
$b, u \in \Oc$. Taking the inner product with $bu$ we get $\|u\|^2L^tb-L(bu)\cdot u =0$. Taking $u=1$ we get $L^t=L$,
so $Lb \cdot u^* = L(bu)$, for all $b, u \in \Oc$. Substituting $b=1$ we obtain $Lu=pu^*$, where $p=L1$, so
$(pb^*)u^* = p(bu)^*$. If $p \ne 0$, this equation, with $b=p$, implies $p^* u^* = u^*p^*$, for all $u \in \Oc$. This
contradiction shows that $p=0$, hence $L=0$, hence $t=0$.

Therefore, the quadratic form $T(X)=Q(X)-\<X,U\>X -3 \sum_{i=1}^\nu \<m_i,X\> J_iX$ vanishes, which implies
$Q(X)=\<X,U\>X +3 \sum_{i=1}^\nu \<m_i,X\> J_iX$. Substituting this and \eqref{eq:nJwith8} into \eqref{eq:8c}, with
$Y \perp \cI^9 X$, we get $U=0$, which proves \eqref{eq:Qsum}.

Now, if $\nu=8$, then \eqref{eq:nXJX} follows from \eqref{eq:nJwith8}. If $\nu < 8$, choose
$X \ne 0, \; Y=J_sX, \; s > \nu$. Substituting into \eqref{eq:8c} and using
\eqref{eq:nJwith8} and \eqref{eq:Qsum} we get $\sum_{i=1}^\nu \eta_i(\<b_{is},X\>J_iJ_sX+\<b_{is},J_sX\>J_iX)=0$.
Taking $X=(a,0)$ and $X=(0,a)$ and using \eqref{eq:Jp} we get $\<b_{is},(a,0)\>=\<b_{is},(0,a)\>=0$, so $b_{is}=0$,
for all $1 \le i \le \nu < s \le 8$. This, together with \eqref{eq:nJwith8}, proves \eqref{eq:nXJX}.

Equation \eqref{eq:bijbji} follows from \eqref{eq:nXJX} and the fact that $\<(\n_XJ_i)X, J_jX\>$ is antisymmetric in
$i$ and $j$.

\smallskip

\textbf{Case (b)} $\nu=7$ and the representation of $\Cl(7)$ is given by \eqref{eq:Jp7}.

Let $e_i, \; i=1, \dots ,7$, be a fixed orthonormal basis in $\Oc'=1^\perp$ (or in its complexification), for
instance, the one with the multiplication table as in \cite[Section~3.64]{Bes}.

As in Case (a), by complexification, we can assume that $J_pX$ is given by equation \eqref{eq:Jp7}, where
$X=(a,b) \in \bc^{16}$ and $a, b, p \in \OC, \; p \perp 1$. We extend $J_pX$ to $\OC$ by complex linearity by defining
$J_{a \cdot 1}X=aX$, for $a \in \bc$. Denote $\cp$ the isotropic cone in $\bc^{16}$, and for $X \in \bc^{16}$, denote
$\IC X$ the complexification of $\cI X$. Take $X \in \cp, \; q \in \OC$. Then $Y = J_q X \in \IC X$, so by
\eqref{eq:8c}, $\sum_{i=1}^7 \eta_i \<(\n_XJ_i)X, J_q X\> J_iJ_q X \in \IC X$. Introduce the operator
$M_X \in \End(\OC)$ by
\begin{equation}\label{eq:defMXq7}
M_X(q) = \sum\nolimits_{i=1}^7 \eta_i \< (\n_XJ_i)X, J_q X \> e_i.
\end{equation}
As $\IC X$ is isotropic, $J_{M_X(q)}J_qX \perp J_p X$, for all $q,p \in \OC$. Then by \eqref{eq:Jp7},
for any $X \in \cp, \; q \in \OC$,
\begin{equation*}
    (a q)^*(a M_X(q))+ (b q)^*(b M_X(q))=0.
\end{equation*}
Consider this bioctonion equation as a linear system for $M \in \End(\OC)$,
with $X =(a, b)\in \cp$. A direct computation shows that $M(q)=q,a^*(bq)$ and $M(q)=\<v,q\>1, \<w,q\>a^*b$, with
arbitrary $v, w \in \OC$, are the solutions. When $a=e_2, \; b= \mathrm{i} e_3$,
these solutions span a subspace of dimension $18$ of $\End(\OC)$, so for all $X=(a,b)$ from a nonempty Zariski open
subset $\cp_1 \subset \cp$, the dimension of the solution space is at least $18$. On the other hand,
a direct computation, with $a=e_2, \; b= \mathrm{i} e_3$ shows that 
every solution is a linear combination of those above. It follows that the corank of the matrix of the linear system
(whose entries are polynomials in the coordinates of $X=(a,b) \in \cp$) equals $18$ for all the points $X=(a,b)$ from
a nonempty Zariski open subset $\cp_2 \subset \cp$. Then for every $X=(a,b) \in \cp_3= \cp_1 \cap \cp_2$, the operator
$M$ is be a linear combination of the four listed above, that is,
\begin{equation*}
    M_X(q)= \<v_X,q\>1 + \<w_X,q\>a^*b + \a_X q + \b_X a^*(bq),
\end{equation*}
for all $X=(a,b) \in \cp_3$, where $v, w: \cp_3 \to \OC, \; \a, \b: \cp_3 \to \bc$.
From \eqref{eq:defMXq7}, $M_X(1)=0, \; M_X(q) \perp 1$, so
\begin{equation}\label{eq:MXq}
    M_X(q)= \Im(\<w_X,q\>a^*b - \<w_X,1\> a^*(bq) + \a_X q),
\end{equation}
where $\Im$ is the operator of taking the imaginary part of a bioctonion: $\Im(q)=q-\<q,1\>1$.  Define the symmetric
operator $D$ on $\OC$ by $D1=0, \; De_i = \eta_i^{-1} e_i$. From \eqref{eq:defMXq7}
it follows that $\<M_Xq, Dq\>=0$, for all $q \in \OC$, so for all $X=(a,b) \in \cp_3$,
\begin{equation}\label{eq:diag}
    \<w_X,q\>\<a^*b, Dq\> = \<w_X,1\> \<a^*(bq), Dq\> - \a_X \<q, Dq\>.
\end{equation}
Substituting $q=b^*a$ and using the fact that $Dq \perp 1$ we get $(\<w_X,b^*a\> - \a_X)\<(b^*a), D(b^*a)\>=  0$.
The algebraic function $\<(b^*a), D(b^*a)\>$ is not zero on $\cp$ (for instance, for
$a=\mathrm{i}e_2, \, b=1$), hence on a nonempty Zariski open subset $\cp_4\subset\cp_3$, we have $\a_X=\<w_X,b^*a\>$.

For $x,y \in \OC$, define the operators $L_x$ and $x \odot y$ on $\OC$ by $L_xq=xq$ and
$(x \odot y) q=\<y,q\>x+\<x,q\>y$. As $L_x^t=L_{x^*}$ and $L_{a^*}L_{b}+L_{b^*}L_{a}=2\<a,b\>\id$, equation
\eqref{eq:diag} can be rewritten as
\begin{equation}\label{eq:diagLie}
    D(a^*b) \odot w_X = \<w_X,1\> [D, L_{a^*}L_{b}] -2 \<\Im \, w_X,b^*a\> D.
\end{equation}
Let $S$ be a symmetric operator commuting with $D$. Multiplying both sides of \eqref{eq:diagLie} by $S$ and taking the
trace we get $\<SD(a^*b), w_X\> = -\<\Im\, w_X,b^*a\> \Tr SD$. Choosing $S$ in such a way that $SD=\Im$ we get
$\<\Im \,w_X,b^*a\>=0$, hence $\<SD(a^*b), w_X\> = 0$, for any symmetric $S$ commuting with $D$. Taking $S$ diagonal
relative to the basis $e_i$ we obtain $\<a^*b, e_i\>\<w_X,e_i\> = 0$, for all $i=1, \dots, 7$. As for a nonempty
Zariski open subset of $X=(a,b) \in \cp$, all the numbers $\<a^*b, e_i\>$ are nonzero, we get $\Im\, w_X=0$,
that is, $w_X=\gamma_X1$, for all $X$ from a nonempty Zariski open subset of $\cp$. Then from the above,
$\a_X=\gamma_X\<a,b\>$ and from \eqref{eq:MXq},
\begin{equation*}
    M_X(q)= \gamma_X(\<1,q\>a^*b - a^*(bq) + \<a,b\> q-2\<a,b\>\<1,q\>+\<b^*a,q\>),
\end{equation*}
for all $X =(a,b)$ from a nonempty Zariski open subset of $\cp$. It then follows from \eqref{eq:defMXq7} that
for all $i=1, \dots, 7$, $\<(\n_XJ_i)X, J_q X \>=\eta_i^{-1} \<M_X(q), e_i\> =
\eta_i^{-1}\gamma_X(\<1,q\>\<a^*b,e_i\> - \<a^*(bq),e_i\> + \<a,b\> \<q,e_i\>)$. Introduce the quadratic forms
$\Phi_i, \Psi_i: \bc^{16} \to \bc^8$ by $(\n_XJ_i)X=(\Phi_i(X), \Psi_i(X))$, for $i =1, \dots, 7$. Then the above
equation and \eqref{eq:Jp7} imply $a^*\Phi_i(X)+b^*\Psi_i(X)=
\eta_i^{-1}\gamma_X(\<a^*b,e_i\> - b^*(ae_i) + \<a,b\> e_i)$. It follows that for every fixed $i =1, \dots, 7$,
the polynomial vectors
$a^*\Phi_i(X)+b^*\Psi_i(X)$ and $T_i(X)=\<a^*b,e_i\> - b^*(ae_i) + \<a,b\> e_i$ are linearly dependent for all
$X=(a,b)$ from a nonempty Zariski open subset of $\cp$, that is, for all $X \in \cp$. Note that
$\<a^*\Phi_i(X)+b^*\Psi_i(X), 1\>=\<T_i(X), 1\>=0$ (the first equation follows from $\<(\n_XJ_i)X,X\>=0$).
Then the rank of the
$7 \times 2$-matrix $N(X)=(a^*\Phi_i(X)+b^*\Psi_i(X)\,|\,T_i(X))$
(whose $j$-th row, $j=1, \dots, 7$, is $\<(a^*\Phi_i(X)+b^*\Psi_i(X),e_j\>\,|\,\<T_i(X),e_j\>)$)
is at most one, for all $X \in \cp$. As
$\mathbf{R}=\bc[X]/\bigl(\|X\|^2\bigr)$, the coordinate ring of $\cp$, is a unique factorization domain
\cite{Nag}, there exist $u_1, u_2 \in \mathbf{R}$ and $v$ in the free module $\mathbf{R}^7$ such that
$\pi(N(X))=(u_1 v \, | \, u_2 v)$, where $\pi:\bc[X] \to \mathbf{R}$ is the natural projection. Let
$U_2, V_j\in \bc[X]$ be the polynomials of the lowest degree in the cosets $\pi^{-1}u_2$ and $\pi^{-1}v_j$
respectively. Lifting the equation $u_2 v_j=\pi(\<T_i(X),e_j\>)=\pi(\<ae_i,be_j\>)$, for $j \ne i$, to $\bc[X]$ we get
$\<ae_i,be_j\>=U_2(X) V_j(X) + \|X\|^2 \Xi_j(X)$, for some $\Xi_j \in \bc[X]$.
Then $U_2$ and $V_j$ are nonzero (as $\<ae_i,be_j\>$ is not divisible by $\|X\|^2=\|a\|^2+\|b\|^2$).
Moreover, as the polynomial on the left-hand side is of degree two in $X$, and $\|X\|^2$ is prime in $\bc[X]$,
the polynomials $U_2$ and $V_j$ are homogeneous, with $\deg U_2+ \deg V_j = 2$ and $\Xi_j$ are constants.

Suppose $\deg U_2=2$. Then the $V_j$'s are nonzero constants for all $j \ne i$. It follows that for some nontrivial
linear combination $e'$ of the $e_j, \; j \ne i$, $\<ae_i, be'\>=0$, for all $a, b \in \Oc$, a contradiction.

Suppose $\deg U_2 = 1$. Then $\deg V_j = 1$, for all $j \ne i$.
Taking $a=0$ we get $\Xi_i=0$ (as the rank of the quadratic form $U_2((0,b)) V_j((0,b))$ in $b$ is at most two),
so for some nonzero linear forms $U_2$ and $V_j$,  $\<ae_i,be_j\>=U_2(X) V_j(X)$. Taking $a=0$ or $b=0$ we obtain that
$U_2(X)=\<l,a\>,\; V_j(X)=\<t_j,b\>$ (or vice versa), for some nonzero $l, t_j \in \Oc, \; j \ne i$. Then taking $b=a$
we arrive at a contradiction.

It follows that $\deg U_2 = 0$, and without loss of generality, we can take $U_2=1$. Then $\pi(N(X))=(u_1 v \, | v)$,
for some $u_1 \in \mathbf{R}, \; v \in \mathbf{R}^7$, so, lifting to $\bc[X]$, we obtain
$a^*\Phi_i(X)+b^*\Psi_i(X) = U_1(X)T_i(X) + \|X\|^2 \Xi(X)$, for some $U_1(X) \in \bc[X]$ and $\Xi(X) \in \bc[X]^8$,
with $\<\Xi(X),1\>=0$.
As the left-hand side is a vector whose components are homogeneous cubic polynomials in $X$, the components of $T_i$
are quadratic forms of $X$, and $\|X\|^2$ is prime in $\bc[X]$, $U_1(X)$ is a linear form and $\Xi$ is a linear
operator. As both sides are real for $X \in \Rs$, for every $i=1, \dots, 7$, there exist $p_i \in \Rs$ and
$\Xi_i: \Rs \to \br^7=\Oc'$ such that $a^*\Phi_i(X)+b^*\Psi_i(X) = \<p_i,X\>T_i(X) + \|X\|^2 \Xi_iX
=\<p_i,X\>(\<a^*b,e_i\> - b^*(ae_i) + \<a,b\> e_i) + \|X\|^2 \Xi_iX$, for all $X=(a,b) \in \Rs$. Let
$\hat\Phi_i(X)=\Phi_i(X)-a \Xi_i X -\frac12 \<p_i,X\>be_i,\;\hat\Psi_i(X)=\Psi_i(X)-b \Xi_i X +\frac12 \<p_i,X\>ae_i$.
Then $a^*\hat\Phi_i(X)+b^*\hat\Psi_i(X) =\<p_i,X\>\<a^*b,e_i\>$. From assertion~2 of Lemma~\ref{l:octlemma}, 
with $N_1(X)=\hat\Phi_i(X), \; N_2(X)=\hat\Psi_i(X)$, $p=p_i, \; u=e_i$, it follows that
$(\n_X J_i)X =(\Phi_i(X), \Psi_i(X))= (a \Xi_i X +\frac12 \<p_i,X\>be_i, b \Xi_i X -\frac12 \<p_i,X\>ae_i)+
\|X\|^2 m_i - \<m_i,X\>X - \sum_{j=1}^7\<J_jX,m_i\>J_jX$, for some $m_i \in \Rs$. As $\<(\n_X J_i)X,X\> =0$
and $\<\Xi_iX,1\>=0$ we get $\<p_i,X\>\<e_i,b^*a\>=0$. Since the polynomial $\<e_i,b^*a\>$ is not divisible by
$\|X\|^2$, we obtain $p_i=0$. Then by \eqref{eq:Jp7},
$(a \Xi_i X, b \Xi_i X)= J_{\Xi_i X}X=\sum_{j=1}^7\<\Xi_iX,e_j\>J_jX$
which implies \eqref{eq:nXJX}, with $b_{ij}=J_jm_i+\Xi_i^te_j$ and with $m_i$ replaced by $\eta_i^{-1} m_i$.
Equation \eqref{eq:bijbji} follows from \eqref{eq:nXJX} and the fact that $\<(\n_XJ_i)X, J_jX\>$ is antisymmetric in
$i$ and $j$.

We next prove \eqref{eq:Qsum}. Substituting \eqref{eq:nXJX} into \eqref{eq:8c} we obtain
\begin{equation}\label{eq:x2ty}
\|X\|^2 T(Y) +\|Y\|^2 T(X)-\<T(X),Y\>Y-\<T(Y),X\>X=0, \quad \text{for all} \; X \perp \cI Y,
\end{equation}
where the quadratic form $T: \Rs \to \Rs$ is defined by
\begin{equation}\label{eq:defT}
T(X)=Q(X)-3\sum\nolimits_{i=1}^7 \<m_i,X\> J_iX.
\end{equation}
For a nonzero $U \in \Rs$, let $X, Y \perp \cI U$ be linearly independent. Then the eight-dimensional spaces
$\cI X$ and $\cI Y$ are both orthogonal to $U$, so their intersection is nontrivial. But if $J_uX=J_vY$,
then $J_{2\<u,v\>u-\|u\|^2v}X=J_{\|v\|^2u}Y$, so $\dim (\cI X \cap \cI Y) \ge 2$. It follows that
$\cI X, \cI Y \perp V$, for some nonzero $V \perp U$. Substituting $V$ for $Y$ into \eqref{eq:x2ty} and taking the
inner product with $U$ we obtain $\|X\|^{-2}\<T(X),U\>=-\|V\|^{-2}\<T(V),U\>$. Similarly, substituting $V$ for $X$
into \eqref{eq:x2ty} and taking the inner product with $U$ we get
$\|Y\|^{-2}\<T(Y),U\>=-\|V\|^{-2}\<T(V),U\>$, so $\|X\|^{-2}\<T(X),U\>=\|Y\|^{-2}\<T(Y),U\>$, for all nonzero
$X, Y \perp \cI U$. It follows that for some function $f: \Rs \to \br$, which is homogeneous of degree one,
\begin{equation}\label{eq:TXU}
\<T(X),U\>=\|X\|^2 f(U), \quad \text{for all} \; X \perp \cI U.
\end{equation}
Taking the inner product of \eqref{eq:x2ty} with $Z \perp \cI Y$
we obtain $\<T(Y),\|X\|^2 Z-\<X,Z\>X\> +\|Y\|^2 \<T(X),Z\>=0$ which by \eqref{eq:TXU} implies
$\<T(X),Z\>=f(\<X,Z\>X)-\|X\|^2 Z)$, for all $X, Z$ with $\dim (\cI X \cup \cI Z) < 16$, that is, with
$\cI X$ and $\cI Z$ having a nontrivial intersection. In particular, taking $Z=J_iX$ we obtain
$\<T(X),J_iX\>=-\|X\|^2 f(J_iX)$. Replacing $X$ by $J_iX$ we get $\<T(J_iX),X\>=-\|X\|^2 f(X)$, for all $X \in \Rs$.
For an arbitrary nonzero $X \in \Rs$, let $U_i, \; i=1, \dots, 8$, be an orthonormal basis for $(\cI X)^\perp$.
Denoting $\Tr T$ the vector in $\Rs$ whose components are the traces of the corresponding components of $T$ and using
the fact that $T(X) \perp X$ (which follows from \eqref{eq:defT} and the fact that $Q(X) \perp X$)
we get $\< \Tr T, X\>=\|X\|^{-2}\sum_{i=1}^7 \<T(J_iX),X\>+\sum_{i=1}^8 \<T(U_i),X\>=
-7 f(X)+8 f(X)=f(X)$, by \eqref{eq:TXU}. Therefore $f$ is a linear form, $f(X)=\<l, X\>$, for some $l \in \Rs$.
Then $T(X)=\|X\|^{-2}\sum_{i=1}^7 \<T(X),J_iX\>J_iX+\sum_{i=1}^8 \<T(X),U_i\>U_i=
-\sum_{i=1}^7 \<l,J_iX\>J_iX+\|X\|^2\sum_{i=1}^8 \<l,U_i\>U_i=\|X\|^2(\pi_{(\cI X)^\perp}l-\pi_{\cI X}l)+\<l,X\>X=
\|X\|^2(l-2\pi_{\cI X}l)+\<l,X\>X$. Substituting this into \eqref{eq:x2ty} and using \eqref{eq:TXU} we get
$l= \pi_{\cI X}l+\pi_{\cI Y}l$, for all $Y \perp \cI X$. Let $l=(l_1, l_2)$. As it follows from \eqref{eq:Jp7},
for $X=(a,b), \; \pi_{\cI X}l=\|X\|^{-2}(\|a\|^2l_1+a(b^*l_2),$ $b(a^*l_1)+\|b\|^2l_2)$, and if $a, b \ne 0$,
the vector $Y=(\|b\|^2aq,-\|a\|^2bq)$ satisfies $Y \perp \cI X$, for all $q \in \Oc$. Then the equation
$l= \pi_{\cI X}l+\pi_{\cI Y}l$ implies $\|q\|^2 a(b^*l_2)=(aq)((bq)^*l_2), \; \|q\|^2 b(a^*l_1)=(bq)((aq)^*l_1)$,
for arbitrary $a, b, q \in \Oc$. The first of them implies $l_2= 0$ (to see that, take $b=q^*$, then the octonions
$a, q, l_2$ associate, for every $a, q$, so $l_2 \in \br$; if $l_2 \ne 0$, then the octonions $a,q,(bq)^*$ associate,
for every $a, q, b$, a contradiction), the second one can be obtained from the first one by interchanging $a$ and $b$,
so it implies $l_1=0$. Thus $l=0$, so $T(X)=0$, which is equivalent to \eqref{eq:Qsum} by \eqref{eq:defT}.

\medskip

2. Substitute $X=J_kY, \; U \perp X, Y$ into \eqref{eq:confBZYX} and
consider the first term in the second summation. As $\<J_iY, X\> = \|Y\|^2 \K_{ik}$, that term equals
$3 \eta_k(2 \< (\n_UJ_k)X, Y \> + \< (\n_XJ_k)Y, U \> + \< (\n_YJ_k)U, X \>)\|Y\|^2$. As $J_k$
is orthogonal and skew-symmetric,
$\< (\n_UJ_k)X, Y \>=\< (\n_UJ_k)J_kY, Y \>=-\< J_k(\n_UJ_k)Y, Y \>=\<(\n_UJ_k)Y, J_kY \>=0$.
Next, $\<(\n_YJ_k)U, X \>=-\<(\n_YJ_k)J_kY, U\>$ $=\<J_k(\n_YJ_k)Y, U\>$ $=
\<(\eta_k^{-1}\|Y\|^2 J_km_k+\sum\nolimits_{j=1}^\nu \<b_{kj},Y\>J_kJ_jY, U\>$ by \eqref{eq:nXJX}. Again by
\eqref{eq:nXJX}, as $Y=-J_kX$, $\< (\n_XJ_k)Y, U \>=\< J_k(\n_XJ_k)X, U \>=
\<J_k(\eta_k^{-1}(\|X\|^2 m_k-\<m_k,X\>X)+\sum\nolimits_{j=1}^\nu \<b_{kj},X\>J_jX),U\>=
\<\eta_k^{-1}\|Y\|^2 J_km_k+\sum\nolimits_{j \ne k} \<b_{kj},J_kY\>J_jY-\<b_{kk},J_kY\>J_kY,U\>$.
Substituting this into \eqref{eq:confBZYX} and using (\ref{eq:Qsum}, \ref{eq:nXJX})
we obtain after simplification:
\begin{equation} \label{eq:XJkY}
\|Y\|^2 (\<2J_km_k,U\>-U(\eta_k))+
 \sum\nolimits_{j=1}^\nu \<\eta_k b_{kj} + \eta_j b_{jk}, \<J_jY,U\>J_kY +\<J_kJ_jY,U\>Y\> =0.
\end{equation}
Choose $i \ne j$ such that $k \ne i,j$
and take the eigenvectors of the symmetric orthogonal operator $J_jJ_iJ_k$ as $U$. For each such
$U, \; J_iJ_kU =\pm J_jU$, so $\dim(\cJ J_kU+ \cJ U) < 2 \nu \le 16$, hence there exists a nonzero
$Y \perp \cJ U + \cJ J_kU$, which implies $U \perp \cJ Y + \cJ J_kY$. Substituting such $U$ and $Y$ into
\eqref{eq:XJkY} we obtain $\<U,2J_km_k-\n\eta_k\>=0$. As the eigenvectors of $J_jJ_iJ_k$ span $\Rs$, equation
\eqref{eq:neta} follows.

Substituting \eqref{eq:neta} into \eqref{eq:XJkY} we obtain
$\sum\nolimits_{j=1}^\nu \<\eta_k b_{kj} + \eta_j b_{jk}, \<J_jY,U\>J_kY +\<J_kJ_jY,U\>Y\> =0$, which implies
$\sum\nolimits_{j \ne k}(\<\eta_k b_{kj} + \eta_j b_{jk}, J_kY\> J_jY+\<\eta_k b_{kj} + \eta_j b_{jk},Y\> J_kJ_jY)=0$.
Equation \eqref{eq:JiJkY} now follows from assertion~2 of \cite[Lemma~3]{Nco}.

By \eqref{eq:defQ} and \eqref{eq:Qsum},
$\<(\n_X \rho)U-(\n_U \rho) X,X\>=3\sum_{i=1}^\nu \<m_i,X\>\<J_iX,U\>$, for all $X, U \in \Rs$. Polarizing this
equation and using the fact that the covariant derivative of $\rho$ is symmetric we obtain
$\<(\n_X\rho)U,Y\>+\<(\n_Y \rho)U,X\>-2\<(\n_U\rho) Y,X\>=3\sum_{i=1}^\nu (\<m_i,Y\>\<J_iX,U\>+\<m_i,X\>\<J_iY,U\>)$.
Subtracting the same equation, with $Y$ and $U$ interchanged, we get $\<(\n_Y \rho)U-(\n_U \rho)Y,X\>=
\sum_{i=1}^\nu (2\<m_i,X\>\<J_iY,U\>$ $+\<m_i,Y\>\<J_iX,U\>- \<m_i,U\>\<J_iX,Y\>)$, which implies \eqref{eq:skewrho}.

To prove \eqref{eq:bijne}, substitute $X \perp \cI Y, \; U = J_kY$ into \eqref{eq:confBZY}. Using
(\ref{eq:lnablarhoi}, \ref{eq:skewrho}) we obtain after simplification:
\begin{equation*}
3 (\n_XJ_k) Y - (\n_YJ_k) X =  -3\eta_k^{-1}\<m_k,Y\>X +
\sum\nolimits_{i=1}^\nu \eta_k^{-1} \<\eta_i b_{ik} + 2 \K_{ik} J_km_k, Y\> J_iX \mod( \cI Y ).
\end{equation*}
Subtracting three times the polarized equation \eqref{eq:nXJX} (with $i=k$) and solving for $(\n_YJ_k) X$ we get
\begin{equation}\label{eq:nyjx}
(\n_YJ_k) X= \sum\nolimits_{i=1}^\nu \tfrac14 \eta_k^{-1}\<3 \eta_k b_{ki}-\eta_i b_{ik}- 2 \K_{ik} J_km_k,Y\>J_iX
\mod( \cI Y ),
\end{equation}
for all $X \perp \cI Y$. Choose $s \ne k$ and define the subset $\mS_{ks} \subset \Rs \oplus \Rs$ by
$\mS_{ks}=\{(X, Y) \, : \, X, Y \ne 0$, $X, J_kX, J_sX \perp \cJ Y \}$. It is easy to see that
$(X,Y) \in \mS_{ks} \Leftrightarrow (Y,X) \in \mS_{ks}$ and that replacing $\cJ Y$ by $\cI Y$ in the definition of
$\mS_{ks}$ gives the same set $\mS_{ks}$. Moreover, the set $\{X \, : \, (X,Y) \in \mS_{ks}\}$ (and hence the set
$\{Y \, : \, (X,Y) \in \mS_{ks}\}$) spans $\Rs$. If $\nu<8$, this follows from \cite[Lemma~3.2 (4)]{Nhjm}.
If $\nu=8$, the Clifford structure is given by \eqref{eq:Jp}. Take $X=(a,b)$, with $\|a\|=\|b\|=1$, and
$Y=(bu, au^*)$ for some nonzero $u \in \Oc$. Then the condition $X \perp \cJ Y$ is satisfied and the condition
$J_kX \perp \cJ Y$ is equivalent to $\<(ae_k^*)q,bu\>+\<(be_k)q^*,au^*\>=0$, for all $q \in \Oc$, that is, to
$(ae_k^*)^*(bu)+(au^*)^*(be_k)=0$. As $(ae_k^*)^*(bu)+(au^*)^*(be_k)=
2\<ae_k^*,b\>-b^*((ae_k^*)u)+2\<au^*,b\>-b^*((au^*)e_k)=2\<ae_k^*,b\>-2\<e_k,u\>b^*a+2\<au^*,b\>$, the latter
condition is satisfied, if we choose $a, b$ and $u$ in such a way that $b^*a, u$ and $e_k$ are orthogonal.
Similar arguments for $e_s$ show that for every $X=(a,b)$, with $\|a\|=\|b\|=1$ and $b^*a \perp e_k, e_s$,
there exists $Y \ne 0$ such that $(X,Y) \in \mS_{ks}$. In particular, taking $X=(\pm be_i, b)$, with a fixed
$e_i \perp e_k,e_s$ and arbitrary unit $b \in \Oc$ we obtain that the set $\{X \, : \, (X,Y) \in \mS_{ks}\}$
spans $\Rs$.

Now, for $(X, Y) \in \mS_{ks}$, take the inner product of \eqref{eq:nyjx} with $J_sX$. Since $\<(\n_YJ_k)X,J_sX\>$ is
antisymmetric in $k$ and $s$, we get $\<(3-\eta_k\eta_s^{-1})b_{ks}+(3-\eta_s\eta_k^{-1})b_{sk}, Y\>=0$, for a set of
the $Y$'s spanning $\Rs$. So $(3-\eta_k\eta_s^{-1})b_{ks}+(3-\eta_s\eta_k^{-1})b_{sk}=0$, for all $k \ne s$. This
and \eqref{eq:JiJkY} imply \eqref{eq:bijne}.

Now from (\ref{eq:JiJkY}, \ref{eq:bijne}) it follows that
$b_{ij}+b_{ji}=0$ for all $i \ne j$, so by \eqref{eq:bijbji}, $\eta_i^{-1}J_jm_i=-\eta_j^{-1}J_im_j$.
Acting by $J_iJ_j$ we obtain that the vector $\eta_i^{-1}J_im_i$ is the same, for all $i=1, \dots, \nu$,
which proves \eqref{eq:allequal}.
\end{proof}

\begin{lemma}\label{l:codazzi}
In the assumptions of Lemma~\ref{l:locc1}, let $x \in M'$ and let $\mU$ be the neighborhood of $x$ introduced in
assertion~\eqref{it:cl} of Lemma~\ref{l:locc1}. Then there exists a smooth metric on $\mU$ conformally
equivalent to the original metric whose curvature tensor has the form \eqref{eq:confcs}, with $\rho$ a constant
multiple of the identity.
\end{lemma}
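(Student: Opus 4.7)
The plan is to exhibit an explicit conformal factor, built from the $\eta_i$, so that after rescaling the tensor $\rho$ becomes a Codazzi tensor, and then to combine the result of \cite{DS} with the Clifford algebraic structure to force this Codazzi tensor to be a constant multiple of the identity.

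First, combining \eqref{eq:neta} with \eqref{eq:allequal} of Lemma~\ref{l:nablarho} one obtains $\n \eta_i = 2 J_i m_i = 2 \eta_i p$, so $\n \log|\eta_i| = 2 p$ independently of $i$. Consequently every ratio $\eta_i/\eta_j$ is locally constant and $p$ is locally a gradient; on a simply connected sub-neighborhood of $x$ I would set $\phi = \tfrac{1}{2}\log|\eta_1|$, so that $\n \phi = p$.

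Next, I would perform the conformal change $\tilde g = e^{2\phi} g$. By the transformation formula for the curvature tensor used at the end of the proof of Lemma~\ref{l:locc1}, namely $\tilde R(X,Y) = R(X,Y) - (X \wedge K Y + K X \wedge Y)$ with $K = H(\phi) - \n\phi \otimes \n\phi + \tfrac{1}{2}\|\n\phi\|^2 \id$, and using $X \tilde\wedge Y = e^{2\phi} X \wedge Y$ to rewrite the wedge products in the new metric, the curvature $\tilde R$ of $(\mU,\tilde g)$ again takes the form \eqref{eq:confcs}, with the same anticommuting almost Hermitian structures $J_i$, with rescaled Clifford coefficients $\tilde\eta_i = e^{-2\phi}\eta_i = \pm\eta_i/|\eta_1|$ which are now constants, and with a new symmetric tensor $\tilde\rho = e^{-2\phi}(\rho - K)$. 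Reapplying Lemma~\ref{l:nablarho} in the metric $\tilde g$, the constancy of the $\tilde\eta_i$ forces via \eqref{eq:neta} that $\tilde J_i \tilde m_i = 0$, hence $\tilde m_i = 0$ for every $i$; equation \eqref{eq:skewrho} then collapses to $(\tilde\n_Y \tilde\rho) X = (\tilde\n_X \tilde\rho) Y$, so $\tilde\rho$ is a Codazzi tensor on $(\mU, \tilde g)$. The theorem of \cite{DS} asserts that a Codazzi tensor commutes with the action of the curvature operator on bivectors at each point; combined with the Clifford piece of \eqref{eq:confcs}, I expect this to propagate to $[\tilde\rho, J_i]=0$ for all $i$. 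Schur's lemma applied to the Clifford representation on $\Rs$ described in Lemma~\ref{l:reps}, together with the symmetry of $\tilde\rho$, would then identify $\tilde\rho$ as a scalar operator at each point, and the Codazzi condition would finally make this scalar constant on the connected open set $\mU$.

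The main obstacle will be extracting, from \cite{DS} together with \eqref{eq:confcs}, the conclusion that $\tilde\rho$ is a genuine scalar operator rather than merely an element of the commutant of the $J_i$'s. This is immediate when $\nu = 8$, since the Clifford module $\rho_8$ is irreducible and its commutant is $\br$; but for smaller $\nu$, in particular for $\nu = 7$ where $\rho_7 = 2\sigma$ is reducible with a two-dimensional commutant (cf.\ the proof of Lemma~\ref{l:reps}), non-scalar symmetric elements exist in the commutant and additional information from \eqref{eq:confcs} and the Codazzi identity applied to such elements must be exploited to rule them out.
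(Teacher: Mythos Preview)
Your reduction to a Codazzi tensor is correct and matches the paper exactly: using \eqref{eq:neta} and \eqref{eq:allequal} to see that the ratios $\eta_i/\eta_j$ are constant, rescaling by $|\eta_1|$, and then reading off from \eqref{eq:skewrho} that the new $\tilde\rho$ is Codazzi. (You should note, though, that Lemma~\ref{l:nablarho} is only available for $\nu>4$; the paper dispatches $\nu\le 4$ by citing \cite[Lemma~7]{Nco}.)

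The genuine gap is the step ``I expect this to propagate to $[\tilde\rho,J_i]=0$.'' The Derdzinski--Shen theorem gives you that each $E_\alpha\wedge E_\beta$ is invariant under the curvature operator on bivectors, equivalently that the Clifford part of $R$ satisfies \eqref{eq:codazzi}. This does \emph{not} yield $J_iE_\alpha\subset E_\alpha$ for the individual $J_i$. What one can extract (and what the paper does extract, for $\nu=8$) is only that for $X\in E_\alpha$ the symmetric operator $Y\mapsto\sum_i\eta_i\langle J_iX,Y\rangle J_iX$ preserves each $E_\beta$, hence that $\pi_\beta\mathcal J X\subset\mathcal J X$ --- a statement about the span $\mathcal J X$, not about each $J_i$ separately. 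In fact, in the case analysis the paper carries out, one of the hypothetical non-scalar configurations has $\mathcal J X=E_\beta$ for $X\in E_\alpha$ with $\alpha\ne\beta$: every $J_i$ \emph{interchanges} the eigenspaces rather than preserving them, so $[\tilde\rho,J_i]\ne 0$ there. That configuration is eventually ruled out, but only by a further algebraic argument using \eqref{eq:codazzi} again, not by Schur's lemma. So the route through commutants and Schur does not go through as stated; the paper instead works directly with \eqref{eq:codazzi}, analyzing the possible splittings $\mathcal J X=\pi_1\mathcal J X\oplus\pi_2\mathcal J X$ and deriving contradictions case by case (with the cases $\nu<8$ again deferred to \cite[Lemma~7, Remark~4]{Nco}).
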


\begin{proof}
If $\nu \le 4$, the proof follows from \cite[Lemma~7]{Nco}. Suppose $\nu \ge 4$.
Let $f$ be a smooth function on $\mathcal{U}$ and let $\<\cdot, \cdot\>' = e^f\<\cdot, \cdot\>$. Then
$W'=W, \; J_i'=J_i, \; \eta_i'= e^{-f} \eta_i$ and, on functions, $\n'=e^{-f}\n$, where we use the dash for the
objects associated to metric $\<\cdot, \cdot\>'$. Moreover, the curvature tensor $R'$ still has the form
\eqref{eq:confcs}, and all the identities of Lemma~\ref{l:nablarho} remain valid.

In the cases considered in Lemma~\ref{l:nablarho}, the ratios $\eta_i/\eta_1$ are constant, as it follows from
(\ref{eq:neta},\ref{eq:allequal}). In particular, taking $f=\ln|\eta_1|$ we obtain that $\eta_1'$ is a constant,
so all the $\eta_i'$ are constant, $m_i'=0$ by \eqref{eq:neta}, so $(\n'_Y \rho')U-(\n'_U \rho')Y=0$
by \eqref{eq:skewrho}. Dropping the dashes, we obtain that, up to a conformal smooth change of the metric on
$\mU$, the curvature tensor has the form \eqref{eq:confcs}, with $\rho$ satisfying the identity
$(\n_Y \rho)X=(\n_X \rho)Y$, for all $X, Y$, that is, with $\rho$  being a symmetric \emph{Codazzi tensor}.

Then by \cite[Theorem~1]{DS}, at every point of $\mathcal{U}$, for any three eigenspaces
$E_\b, E_\gamma, E_\a$ of $\rho$, with $\a \notin \{\b, \gamma\}$,
the curvature tensor satisfies $R(X,Y)Z = 0$, for all $X \in E_\b, \; Y \in E_\gamma, \; Z \in E_\a$.
It then follows from \eqref{eq:confcs} that
\begin{equation}\label{eq:codazzi}
\begin{gathered}
\sum\nolimits_{i=1}^\nu \eta_i (2 \< J_iX, Y \> J_iZ + \< J_iZ, Y \> J_iX - \< J_iZ, X \> J_iY) = 0, \\
\text{for all } X \in E_\b, \; Y \in E_\gamma, \; Z \in E_\a, \quad \a \notin \{\b, \gamma\}.
\end{gathered}
\end{equation}
Suppose $\rho$ is not a multiple of the identity. Let $E_1, \ldots, E_p, \; p \ge 2$, be the eigenspaces of $\rho$.
If $p>2$, denote $E_1'=E_1, \; E_2'=E_2 \oplus \dots \oplus E_p$. Then by linearity, \eqref{eq:codazzi}
holds for any $X, Y \in E'_\a, \; Z \in E'_\b$, such that $\{\a, \b\}=\{1,2\}$. Hence to prove
the lemma it suffices to show that \eqref{eq:codazzi} leads to a contradiction,
in the assumption $p=2$. For the rest of the proof, suppose that $p=2$. Denote $\dim E_\a =d_\a$.

If $\nu <8$, the claim follows from the proof of \cite[Lemma~7]{Nco} (see \cite[Remark~4]{Nco}). Suppose $\nu=8$,
then the Clifford structure is given by \eqref{eq:Jp}.

Choosing $Z \in E_\a, \; X, Y \in E_\b,\; \a \ne \b$, and taking the inner product of \eqref{eq:codazzi} with $X$ we
obtain $\sum_{i=1}^8 \eta_i \< J_iX, Y \>\< J_iX, Z\> = 0$.
It follows that for every $X \in E_\a$, the subspaces $E_1$ and $E_2$ are invariant subspaces of the symmetric
operator $R'_X \in \End(\Rs)$ defined by $R'_XY=\sum_{i=1}^8 \eta_i \< J_iX, Y \>J_iX$. So
$R'_X$ commutes with the orthogonal projections $\pi_\b: \Rs \to E_\b, \; \b=1,2$. Then for all $\a,\b=1,2$
($\a$ and $\b$ can be equal), all $X \in E_\a$ and all $Y \in \Rs, \quad
\sum_{i=1}^8 \eta_i \< J_iX, \pi_\b Y \> J_iX = \sum_{i=1}^8 \eta_i \< J_iX, Y \> \pi_\b J_iX$.
Taking $Y=J_jX$ we get that $\pi_\b J_jX \subset \cJ X$, that is, $\pi_\b \cJ X \subset \cJ X$, for all
$X \in E_\a, \; \a,\b=1,2$. As $\pi_1 + \pi_2 = \id$, we obtain
$\cJ X \subset \pi_1 \cJ X \oplus \pi_2 \cJ X \subset \cJ X$, hence
\begin{equation}\label{eq:sumproj}
\cJ X = \pi_1 \cJ X \oplus \pi_2 \cJ X, \quad \text{for all} \;  X \in E_1 \cup E_2.
\end{equation}
As all four functions $f_{\a\b}: E_\a \to \mathbb{Z}, \; \a,\b=1,2$, defined by
$f_{\a\b}(X) = \dim \pi_\b \cJ X, \; X \in E_\a$, are lower semi-continuous and $f_{\a 1}(X) + f_{\a 2}(X) = 8$
for all nonzero $X \in E_\a$, there exist constants $c_{\a\b}$, with $c_{\a 1} + c_{\a 2} = 8$, such that
$\dim \pi_\b \cJ X = c_{\a\b}$, for all $\a,\b=1,2$ and all nonzero $X \in E_\a$.

Consider two cases.

First assume that there exist no nonzero $Y \in E_\a, \; Z \in E_\b, \; \a \ne \b$, such that $Y \perp \cJ Z$. Then it
follows from \eqref{eq:sumproj} that $\cJ X \supset E_\b$, for $\a \ne \b$. Then $d_1, d_2 \le 8$. As $d_1+d_2 = 16$,
we obtain $d_1=d_2 = 8$ and $\cJ X = E_\b$, for every $X \in E_\a, \; \a \ne \b$. Then \eqref{eq:codazzi}, with
$X,Y \in E_\a, \; Z \in E_\b$, $\a \ne \b$, gives $\sum_{i=1}^8 \eta_i(\< Z, J_iX \> J_iY - \< Z, J_iY \> J_iX) = 0$.
Taking the inner product with $J_jX$ we get
$\eta_j\< Z, J_jY \> \|X\|^2 = \sum_{i=1}^8 \eta_i \< Z, J_iX \> \<J_iY,J_jX\>$. Taking $Z = J_kX \, (\in E_\b)$ and
assuming $X \perp Y$ we obtain $(\eta_j+\eta_k)\< X, J_kJ_jY \>  = 0$, for $k \ne j$. Taking
$X = J_kJ_jY \, (\in E_\a)$ we get $\eta_j+\eta_k=0$, for $k \ne j$, so all the $\eta_i$'s are zeros, a
contradiction with $\nu = 8$.

Otherwise, assume that there exist nonzero $Y \in E_\a, \; Z \in E_\b, \; \a \ne \b$, such that $Y \perp \cJ Z$.
Substituting such $Y$ and $Z$ into \eqref{eq:codazzi}, with $X \in E_\a$, we obtain
$\sum_{i=1}^8 \eta_i (2 \< J_iY, X \> J_iZ + \< J_iZ, X \> J_iY) = 0$. Taking $X\in E_\a$ orthogonal to $\pi_\a \cJ Y$
(and then to $\pi_\a \cJ Z$) we obtain $\pi_\a \cJ Y=\pi_\a \cJ Z$.
As the condition $Y \perp \cJ Z$ is symmetric in $Y$ and $Z$, we can interchange $Y$ and $Z$ and $\a$ and $\b$
to get $\pi_\b \cJ Y=\pi_\b \cJ Z$, which by \eqref{eq:sumproj}
implies that $\cJ Y = \cJ Z$, for any two nonzero vectors $Y \in E_\a, \; Z \in E_\b, \; \a \ne \b$, such that
$Y \perp \cJ Z$. Now, if for some nonzero $Y \in E_\a$ there exists $Z \in E_\b, \; \a \ne \b$, such that
$Y \perp \cJ Z$, then by \eqref{eq:sumproj}, the space $\pi_\b \cJ Y$ is a proper subspace of $E_\b$, so
$c_{\a\b} < d_\b$, which implies that for every nonzero $X \in E_\a$ and any nonzero $Z$ from the orthogonal
complement to $\pi_\b \cJ X$ in $E_\b$ (which is nontrivial), $X \perp \cJ Z$, hence $\cJ X = \cJ Z$, from the above.

Consider an operator $P= \prod_{i=1}^8 J_i$. As the $J_i$'s are anticommuting almost Hermitian structures,
$P$ is symmetric and orthogonal, and $\Tr P =0$, as $P$ is the product of the
symmetric operator $\prod_{i=1}^7 J_i$ and the skew-symmetric one, $J_8$. So its eigenvalues are $\pm 1$, with
both eigenspaces $V_{\pm}$ of dimension $8$. As the $J_i$'s anticommute, each of them interchanges the eigenspaces
of $P: \; J_i V_{\pm}=V_{\mp}$.

From the above, for every unit vector $X \in E_1$, there exists a unit vector $Z \perp X$ such that $\cJ X = \cJ Z$.
Therefore, by  \eqref{eq:ortmult} there exists $F: \Ro \to \Ro$ such that for all $u \in \Ro,\; J_{F(u)}X=J_uZ$. As
the right-hand side is linear in $u$, the map $F$ is also linear: $F(u)=Au$, for some $A \in \End(\Ro)$.
Moreover, as $J_uX$ is an orthogonal multiplication and as $X$ and $Z$ are orthonormal, the operator $A$ is
orthogonal and skew-symmetric. Without losing generality, we can assume that an orthonormal basis for $\Ro$ is
chosen in such a way that $J_iX=J_{i+4}Z$, for $i=1,2,3,4$, so $J_jJ_{j+4}J_{i+4}J_iX=X$, for all
$1 \le i \ne j \le 4$. As changing an orthonormal basis for $\Ro$ may only change the sign of $P$, it
follows that $X$ is an eigenvector of $P$, say $X \in V_+$. As $X$ is an arbitrary unit vector from $E_1$,
by continuity, $E_1 \subset V_+$. Similarly, $E_2 \subset V_-$. But as every $J_i$ interchanges the $V_{\pm}$'s,
we get $\cJ E_1 = E_2$, which contradicts the assumption that there exist nonzero $Y \in E_1, \; Z \in E_2$ with
$Y \perp \cJ Z$.

Hence the Codazzi tensor $\rho$ is a multiple of the identity. The definition of the Codazzi tensor easily implies
that $\rho$ is a constant multiple of the identity on $\mU$.
\end{proof}

By Lemma~\ref{l:codazzi}, up to a conformal change of the metric, we can assume that on $\mU$, the curvature tensor
has the form \eqref{eq:confcs}, with $\rho$ a constant multiple of the identity.
Then \eqref{eq:defQ} implies that $Q=0$, so $m_i=0$ by \eqref{eq:Qsum}, $\n \eta_i=0$ by \eqref{eq:neta} and
$(\n_X J_i)X = \sum_{j=1}^\nu \<b_{ij},X\>J_jX$ by \eqref{eq:nXJX}. Then from \eqref{eq:confcs},
$(\n_XR)(X,Y)X=3\sum_{i=1}^\nu \eta_i( \< (\n_XJ_i)X, Y \> J_iX + \< J_iX, Y \> (\n_XJ_i)X)=
3\sum_{i,j=1}^\nu \<\eta_ib_{ij}+\eta_j b_{ji},X\>$ $\< J_jX, Y \> J_iX=0$,
as $\eta_ib_{ij}+\eta_jb_{ji}=0$ (by \eqref{eq:JiJkY} for $i\ne j$, and by \eqref{eq:bijbji} for $i=j$, as $m_i=0$).

It is well-known \cite[Proposition~2.35]{Bes} that the equation $(\n_XR)(X,Y)X=0$ implies $\n R=0$, so the
metric on $\mU$ is locally symmetric. As $\rho$ is a multiple of the identity, it follows from \eqref{eq:confcs} that
the curvature tensor is Osserman. Then by \cite[Lemma 2.3]{GSV}, $\mU$ is either flat or is locally isometric to a
rank-one symmetric space.

Thus, for every $x \in M'$ satisfying assertion~\eqref{it:cl} of Lemma~\ref{l:locc1}, the metric on the neighborhood
$\mU=\mU(x)$ is either conformally flat or is conformally equivalent to the metric of a rank-one symmetric space.

\subsection{Cayley case}
\label{ss:oc}

In this section, we consider the case when the Weyl tensor has a Cayley structure.

Let $x \in M'$ and let $\mU= \mU(x)$ be neighborhood of $x$ defined in assertion~\eqref{it:oc} of
Lemma~\ref{l:locc1}. Up to a conformal change of the metric, the curvature tensor on $\mU$ is given by
\eqref{eq:op2ctc}. Then
\begin{equation}\label{eq:op2nctc}
    (\n_Z R)(X,Y)=(\n_Z \rho) X \wedge Y - (\n_Z \rho) Y \wedge X +
    \ve \sum\nolimits_{i=0}^8 ((\n_Z S_i) X \wedge S_i Y + S_i X \wedge (\n_Z S_i) Y),
\end{equation}
so the second Bianchi identity has the form
\begin{equation}\label{eq:op2bi}
    \sigma_{XYZ}\bigl(A(Y,X) \wedge Z + \ve \sum\nolimits_{i=0}^8 S_i Z \wedge B_i(Y, X)\bigr)=0,
\end{equation}
where $\sigma_{XYZ}$ is the sum taken over the cyclic permutations of $(X, Y, Z)$, and the skew-symmetric maps
$A, B_w: \Rs \times \Rs \to \Rs$ are defined by
\begin{equation}\label{eq:abw}
    A(Y,X)=(\n_Y \rho) X - (\n_X \rho) Y, \quad B_w(Y,X)=(\n_Y S_w) X - (\n_X S_w) Y,
\end{equation}
for $w \in \Rd, \; X,Y \in \Rs$, where $S_wX=\sum_{i=0}^8 w_i S_i$ is the orthogonal multiplication defined in
Section~\ref{ss:actcayley}.

\begin{lemma}\label{l:op2bi}
In the assumptions of Lemma~\ref{l:locc1}, let $x \in M'$ and let $\mU$ be the neighborhood of $x$ introduced in
assertion~\eqref{it:oc} of Lemma~\ref{l:locc1}. For every point $y \in \mU$, identify
$T_yM^{16}$ with the Euclidean space $\Rs$ via a linear isometry. Then

\begin{enumerate}[1.]
   \item \label{it:0op2bi}
    There exists a linear map $N:\Rs \to \Sk(\Rs), \, X \to N_X$, such that $\n_XS_w=[S_w,N_X]$ for all
    $w \in \Rd, \, X \in \Rs$.

  \item
  For every unit vector $w \in \Rd$, there exists a linear operator $L_w: w^\perp \to E_1(S_w)$ such that for every
  $X, Y \in E_1(S_w), \; Z \in E_{-1}(S_w)$, and every $u \in \Rd, \, u \perp w$,
  \begin{equation}\label{eq:pi1AB}
  \begin{gathered}
    \pi_{E_1(S_w)}B_w(Y,X)=0, \\
    \pi_{E_1(S_w)}A(Y,X)=0, \\ 
    \pi_{E_1(S_w)}B_u(Y,X)=\<L_wu, X\>Y-\<L_wu, Y\>X, \\
    \pi_{E_1(S_w)}(A(Z, Y)- \ve B_w(Z, Y))=\ve S_{L_w^tY}Z,
  \end{gathered}
  \end{equation}
  where $E_{\pm1}(S_w)$ are the $\pm 1$-eigenspaces of $S_w$ and $\pi_{E_1(S_w)}$ is the orthogonal projection to
  $E_1(S_w)$. 

  \item \label{it:2op2bi}
  There exists a bilinear skew-symmetric map $T: \Rd \times \Rd \to \Rs$ such that for all
  $X, Y \in \Rs, \; w \in \Rd$,
  \begin{equation*} 
    A(Y, X) = 0, \quad B_w(Y, X) = \sum\nolimits_{i=0}^8 (\<T(w,e_i), X\> S_i Y - \<T(w,e_i), Y\> S_i X).
  \end{equation*}

  \item \label{it:3op2bi}
  The tensor $\rho$ is a constant multiple of the identity on $\mU$.

  \item \label{it:4op2bi}
  $\n_X S_w = - \sum\nolimits_{i=0}^8 \<T(w,e_i), X\> S_i$.
\end{enumerate}
\end{lemma}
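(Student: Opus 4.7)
The plan is to extract information from the differential Bianchi identity \eqref{eq:op2bi} by using the extensive algebraic machinery already developed for the Cayley structure in Lemma~\ref{l:mA}. The five assertions chain logically: assertion~1 gives the algebraic form of $\n S$; this feeds into a direct computation of the cyclic sum in \eqref{eq:op2bi}, producing the projection identities of assertion~2; these together with Lemma~\ref{l:mA}(5) yield assertion~3; assertion~4 follows by a Codazzi argument parallel to Lemma~\ref{l:codazzi}; and assertion~5 is the combination of assertions~1 and~3.

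For assertion~1, metric compatibility makes $\n_XS_w$ symmetric, and differentiating $S_w^2=\|w\|^2\id$ in $X$ gives $\{S_w,\n_XS_w\}=0$. Decomposing $\n_XS_w=\alpha\id+\sum_j\beta_jS_j+\gamma$ with $\gamma\in\mL_4$ according to Lemma~\ref{l:mA}(1), the identity $\{S_w,\n_XS_w\}=0$ sends the three pieces into the mutually orthogonal summands $\mL_1,\,\mL_0,\,\mL_3\oplus\mL_4$, and so forces $\alpha=0$, the diagonal $\beta_w=0$, and $\gamma=0$. Polarizing the Clifford relation $\{S_i,S_j\}=2\K_{ij}\id$ under $\n_X$ then makes the remaining coefficients satisfy $\beta_{ij}(X)=-\beta_{ji}(X)$, so $N_X=\tfrac12\sum_{i<j}\beta_{ij}(X)S_iS_j\in\mL_2\subset\Sk(\Rs)$, and $\n_XS_w=[S_w,N_X]$ follows from $[S_k,S_iS_j]=2(\K_{ki}S_j-\K_{kj}S_i)$.

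For assertion~2, substitute \eqref{eq:op2ctc} and assertion~1 into \eqref{eq:op2nctc}--\eqref{eq:op2bi} and test the resulting cyclic-symmetric operator equation against triples drawn from the eigenspaces $E_{\pm1}(S_w)$. Using $\pi_{E_1(S_w)}=\tfrac12(\id+S_w)$, the orthogonality of $E_1(S_w)$ and $E_{-1}(S_w)$, and the basic wedge identity $(X\wedge Y)Z=\<X,Z\>Y-\<Y,Z\>X$, most terms of the cyclic sum collapse; the four stated projection identities are precisely what survives, with the operator $L_w$ recording the $u$-dependence (for $u\perp w$) of the $E_1(S_w)$-component of $B_u(Y,X)$. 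Assertions~3 and~5 then follow by pure algebra: the first two projection identities of assertion~2 say $\<A(Y,X),Z\>=0$ and $\<B_w(Y,X),Z\>=0$ for $X,Y,Z\in E_1(S_w)$ at every $w$, whence Lemma~\ref{l:mA}(5) yields $A(Y,X)=(\mA-\id)(X\wedge Y)q_A$ and, by linearity in $w$, $B_w(Y,X)=(\mA-\id)(X\wedge Y)q_w$ for some linear map $\Rd\ni w\mapsto q_w\in\Rs$. The remaining projection identities (those mixing $E_1(S_w)$ and $E_{-1}(S_w)$) then force $q_A=0$ and recast $(\mA-\id)(X\wedge Y)q_w$ in the form $\sum_i(\<T(w,e_i),X\>S_iY-\<T(w,e_i),Y\>S_iX)$ with $T(w,e_i)=S_iq_w$, using Lemma~\ref{l:mA}(4) to absorb the $\id$-part of $\mA-\id$. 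Skew-symmetry of $T$ follows from antisymmetry of $B_w(Y,X)$ in $(X,Y)$ combined with assertion~1. Finally, assertion~5 is a direct rewriting: comparing the formula of assertion~3 with the definition $B_w(Y,X)=(\n_YS_w)X-(\n_XS_w)Y$ and using that $\n_YS_w$ is a single symmetric operator matching on both sides gives $\n_XS_w=-\sum_i\<T(w,e_i),X\>S_i$.

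For assertion~4, once $A=0$, the tensor $\rho$ is a Codazzi tensor. Applying \cite[Theorem~1]{DS} together with the explicit two-eigenvalue Jacobi spectrum of $R^{\Oc}$ given by Lemma~\ref{l:mA}(4) rules out any nontrivial partition of $\Rs$ into $\rho$-eigenspaces, essentially as in the Clifford case (Lemma~\ref{l:codazzi}); hence $\rho$ is a multiple of the identity, and the Codazzi property forces this multiple to be locally constant on the connected open set $\mU$. The main technical obstacle is assertion~2: the bookkeeping for the projections of the cyclic sum $\sigma_{XYZ}\bigl(A(Y,X)\wedge Z+\ve\sum_iS_iZ\wedge B_i(Y,X)\bigr)$ to $E_1(S_w)$ requires care because the individual $B_i(Y,X)$ have a priori components in both eigenspaces of $S_w$; once assertion~2 is in hand, the remaining assertions are straightforward unpacking of Lemma~\ref{l:mA}.
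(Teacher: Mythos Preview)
Your outline has the right architecture, but several of the individual steps contain genuine gaps.

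\textbf{Assertion 1.} Your claim that $\gamma=0$ is false. From $\{S_i,\n_XS_i\}=0$ you correctly obtain $\alpha_i=0$, $\beta_{ii}=0$, and $\{S_i,\gamma_i\}=0$; but the last condition only says that $\gamma_i$ anticommutes with $S_i$, and any element of $\mL_4$ of the form $S_iS_jS_kS_l$ (with $j,k,l\ne i$) does so. Thus $\n_XS_i$ genuinely lives in $\mL_1\oplus\mL_4$, not in $\mL_1$ alone, and $N_X$ cannot in general be chosen in $\mL_2$. The paper instead constructs $N_X=\tfrac1{48}(7\id+\mA)\sum_jS_jT_j\in\mL_2\oplus\mL_3$ and verifies $[S_i,N_X]=T_i$ by computing $[S_i,\mA(\cdot)]=(-2\id-\mA)[S_i,\cdot]$ together with $(\mA-\id)(\mA+7\id)T_i=0$ on $\mL_1\oplus\mL_4$; the $\mL_3$-part of $N_X$ is exactly what handles the nonzero $\gamma_i$.

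\textbf{Assertion 3.} You cannot apply Lemma~\ref{l:mA}(5) directly to $B_w$. That lemma requires $\<N(X,Y),Z\>=0$ for $X,Y,Z\in E_{\|v\|}(S_v)$ for \emph{every} $v\in\Rd$, whereas the first identity of \eqref{eq:pi1AB} gives vanishing only on $E_1(S_w)$ for that same $w$. For $v\perp w$, the third identity shows $\<B_w(Y,X),Z\>=\<L_vw,X\>\<Y,Z\>-\<L_vw,Y\>\<X,Z\>$ on $E_1(S_v)$, which is nonzero in general. The paper avoids this by first combining the first and third identities to get $\pi_{E_{\|w\|}(S_w)}B_u(Y,X)=\<L_wu,X\>Y-\<L_wu,Y\>X$ for all $u$, then integrating over the unit sphere of $E_{\|w\|}(S_w)$ to identify $L_wu=Cu+\|w\|^{-1}T(u,w)$; only the residual $B''_u=B_u-B'_u$ (with an explicit $B'_u$ absorbing these terms) satisfies the hypothesis of Lemma~\ref{l:mA}(5), and additional work with the fourth identity is needed to kill the extra vectors $q$, $Cu$, $C'u$ and to make $T$ skew-symmetric.

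\textbf{Assertion 5.} This is not a ``direct rewriting''. Matching $B_w$ only determines $(\n_XS_w)Y$ up to a term $Q(X,Y)$ with $\<Q(X,Y),Z\>$ totally symmetric in $X,Y,Z$; to kill $Q$ the paper uses assertion~1 again, shows that the adjusted $N'_X=N_X+\tfrac14\sum_{i,j}\<T(e_i,e_j),X\>S_iS_j$ commutes with every $S_w$, and deduces $[S_w,N'_X]=0$.

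Assertion~2 also requires more than ``most terms collapse'': the third identity in particular needs a nontrivial argument isolating the form $\<v'_j,Y\>X-\<v'_j,X\>Y$ by testing on carefully chosen $Z\in E_{-1}(S_w)$ and exploiting symmetry under permutations of the indices.
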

\begin{proof}
1. Denote $T_i=\n_XS_i$. The operators $T_i$ are symmetric and satisfy $T_iS_j+T_jS_i+S_jT_i+S_iT_j=0$, for all
$i,j=0,1, \dots, 8$, by \eqref{eq:SiSj}. In particular, the operators $S_iT_i$ are skew-symmetric and
\begin{equation}\label{eq:[sst]}
    [S_i,S_iT_i]=2T_i, \qquad [S_i,S_jT_j]=T_i+S_jT_iS_j, \; i \ne j.
\end{equation}
Define $N:\Rs \to \Sk(\Rs)$ by $N_X=\frac{1}{48} (7\id + \mA)\sum_{j=0}^8 S_j T_j$, for $X \in \Rs$. The fact that
$N_X$ is indeed skew-symmetric follows from assertion~2 of Lemma~\ref{l:mA}, as $S_jT_j$ are skew-symmetric. Moreover,
for any $i=0, \dots, 8$, from assertion~3 of Lemma~\ref{l:mA},
$[S_i,\mA(Q)]=-2[S_i,Q]-\mA([S_i,Q])=(-2\id-\mA)([S_i,Q])$,
so $[S_i,N_X]=\frac{1}{48}[S_i,(7\id + \mA)\sum_{j=0}^8 S_j T_j]=\frac{1}{48}(5\id - \mA)\sum_{j=0}^8 [S_i,S_j T_j]$.
Then by \eqref{eq:SiSj} and \eqref{eq:[sst]},
$[S_i,N_X]=\frac{1}{48}(5\id - \mA)(11\id + \mA)T_i=-\frac{1}{48}(\mA^2+6\mA-55\id)T_i$. As $T_i$ is symmetric and
$\Tr T_i=0$ (which follows from $T_i=S_i \cdot S_iT_i$ and the fact that $S_iT_i$ is skew-symmetric),
we obtain from assertion~1 of Lemma~\ref{l:mA} that $T_i \in \mL_1 \oplus \mL_4$. Then by assertion~2 of
Lemma~\ref{l:mA}, $(\mA - \id)(\mA + 7\id)T_i=0$, which implies $[S_i,N_X]=T_i$.

2. As by assertion~2 of Lemma~\ref{l:mA}, the operator $\mA$ does not depend on the choice of the orthonormal basis
$\frac14 S_i$ for $\mL_1$, it follows from \eqref{eq:op2nctc} that we lose no generality by assuming $w=e_0 \in \Rd$.
By assertion~1, $\n_XS_0=[S_0,N_X]$, so from \eqref{eq:abw},
$B_0(Y,X)=[S_0,N_Y]X-[S_0,N_X]Y=(S_0-\id)(N_YX-N_XY)=-2\pi_{E_{-1}(S_0)}(N_YX-N_XY)$, so
$\pi_{E_1(S_0)}B_0(Y,X)=0$. This proves the first identity of \eqref{eq:pi1AB}.

To prove the second one, take $X, Y, Z \in E_1(S_0)$. Note that by \eqref{eq:SiSj}, $S_jX \in E_{-1}(S_0)$ for all
$j \ne 0$ (and similarly, for $Y$ and $Z$). Then projecting \eqref{eq:op2bi} to $E_1(S_0) \wedge E_1(S_0)$ and using
the first identity of \eqref{eq:pi1AB}, we obtain $\sigma_{XYZ}((\pi_{E_1(S_0)}A(Y,X)) \wedge Z)=0$. Assuming $X,Y,Z$
linearly independent and acting by the both sides on a vector $U \in E_1(S_0) \cap (\Span(X,Y,Z))^\perp$ we obtain
$\<A(Y,X), U\>=0$, so $\pi_{E_1(S_0)}A(Y,X)\in\Span(X,Y)$ (as $Z \in E_1(S_0) \setminus\Span(X,Y)$ can be chosen
arbitrarily). Then for orthonormal vectors $X, Y \in E_1(S_0),\; \pi_{E_1(S_0)}A(Y,X)=\<A(Y,X),X\>X+\<A(Y,X),Y\>Y$,
so the coefficient of $X \wedge Z$ in $\sigma_{XYZ}((\pi_{E_1(S_0)}A(Y,X)) \wedge Z)=0$ (with orthonormal
$X, Y, Z \in E_1(S_0)$) gives $\<A(Y,X),X\>+\<A(Y,Z),Z\>=0$ (using the fact that $A(Y,X)$ is antisymmetric in $X, Y$,
by \eqref{eq:abw}). Then $\<A(Y,X),X\>=0$, hence $\pi_{E_1(S_0)}A(Y,X)=0$, for any orthonormal vectors
$X, Y \in E_1(S_0)$.

For the remaining two identities, take $X, Y \in E_1(S_0), \; Z \in E_{-1}(S_0)$ in \eqref{eq:op2bi} and
project the resulting equation to $E_1(S_0) \wedge E_1(S_0)$. As by \eqref{eq:SiSj},
$S_iX, S_iY \in E_{-1}(S_0), \; S_iZ \in E_1(S_0)$,
for all $i \ge 1$, we get
\begin{equation}\label{eq:AepsB}
\ve \sum\nolimits_{i=1}^8 S_i Z \wedge \pi_{E_1(S_0)}(B_i(Y, X))+
\pi_{E_1(S_0)}((A-\ve B_0)(Z,Y)) \wedge X + \pi_{E_1(S_0)}((A- \ve B_0)(X,Z))\wedge Y =0.
\end{equation}
Taking the inner product of \eqref{eq:AepsB} with $S_jZ \wedge S_kZ$ we find that the expression
$\<\ve \|Z\|^2B_j(Y, X)- \<S_j Z,X\>(A-\ve B_0)(Z,Y)-\<S_j Z,Y\>(A- \ve B_0)(X,Z), S_kZ\>$ is symmetric in
$j,k \ge 1$, for all $X, Y \in E_1(S_0)$, $Z \in E_{-1}(S_0)$. Fix $j,k \ge 1,\, j \ne k$, and take
$Z \perp \Span_{a=j,k}(S_aX,S_aY)$. Then
$S_k \pi_{E_1(S_0)}B_j(Y, X)-S_j \pi_{E_1(S_0)}B_k(Y, X) \in \Span_{a=j,k}(S_aX,S_aY)$, so
(acting by $S_jS_k$ on the both sides)
$S_j \pi_{E_1(S_0)}B_j(Y, X)+S_k \pi_{E_1(S_0)}B_k(Y, X) \in \Span_{a=j,k}(S_aX,S_aY)$. Let
$\mS_{jk} \subset E_1(S_0) \times E_1(S_0)$ be the set of pairs $(X, Y)$ such that
$X \ne 0, \; Y \notin \Span(X, S_jS_k X)$. Then $\mS_{jk}$ is open and dense in $E_1(S_0) \times E_1(S_0)$,
and the vectors $S_jX,S_kX,S_jY,S_kY$ are linearly independent for $(X,Y)\in\mS_{jk}$. As
$S_j\pi_{E_1(S_0)}B_j(Y, X)+S_k\pi_{E_1(S_0)}B_k(Y, X)$ is skew-symmetric in $X, Y$ and symmetric in $k, j$,
there exist (rational) functions $f_{jk}, f_{kj}:\mS_{jk} \to \br$ such that
$S_j \pi_{E_1(S_0)}B_j(Y, X)+S_k \pi_{E_1(S_0)}B_k(Y, X)=
f_{jk}(X,Y)S_jX+f_{kj}(X,Y)S_kX-f_{jk}(Y,X)S_jY-f_{kj}(Y,X)S_kY$, for every $(X,Y)\in\mS_{jk}$. Taking
$Y' \in  E_1(S_0) \setminus \Span(X,S_jS_kX,Y,S_jS_kY)$ (so that the vectors $S_jX,S_kX,S_jY,S_kY,S_jY',S_kY'$ are
linearly independent) and replacing $Y$ by $aY+bY' \ne 0$, so that $(X, aY+bY') \in \mS_{jk}$,
we obtain from the linearity of the left-hand side that $f_{jk}$ and $f_{kj}$ do not depend
on the first argument and are linear in the second one. It follows that for some vectors
$v_{jk} \in E_1(S_0), \; 1 \le j \ne k \le 8, \; S_j \pi_{E_1(S_0)}B_j(Y, X)+S_k \pi_{E_1(S_0)}B_k(Y, X)=
\<v_{jk},Y\>S_jX+\<v_{kj},Y\>S_kX-\<v_{jk},X\>S_jY-\<v_{kj},X\>S_kY$, for all $X, Y \in E_1(S_0)$. Choose
$i, l$ such that $i,j,k,l$ are all distinct and add to the above equation the same one with $j,k$ replaces by $i,l$.
The left-hand side of the resulting equation is symmetric in all four indices $i,j,k,l$, hence the right-hand side
also is. Choosing $X, Y \in E_1(S_0)$ in such a way that the eight vectors $S_aX,S_aY, \; a=i,j,k,l$, are
linearly independent (to do that, take $X \ne 0$ and
$Y \notin \Span_{a \ne b, \{a,b\} \subset \{i,j,k,l\}}(X, S_aS_b X)$), we obtain that $v_{jk}=v'_j$, for all
$j,k \ge 1, \, j\ne k$, with some $v'_j \in E_1(S_0)$. It follows that
$S_j (\pi_{E_1(S_0)}B_j(Y, X)-\<v'_j,Y\>X+\<v'_j,X\>Y)+S_k (\pi_{E_1(S_0)}B_k(Y, X)-\<v'_k,Y\>X+\<v'_k,X\>Y)=0$, for
all $X, Y \in E_1(S_0)$ and all $j,k \ge 1, \, j \ne k$, which implies
$\pi_{E_1(S_0)}B_j(Y, X)=\<v'_j,Y\>X-\<v'_j,X\>Y$. This proves the third identity of \eqref{eq:pi1AB}, if we define
the operator $L_{e_0}$ by $L_{e_0}e_j=-v'_j$ and extend it by linearity to $e_0^\perp$.

Substituting the third identity of \eqref{eq:pi1AB}, with $w=e_0$, to \eqref{eq:AepsB} we obtain
$(F(Z)X) \wedge Y=(F(Z)Y) \wedge X$, where the linear operator $F:E_{-1}(S_0) \to \End(E_1(S_0))$
is defined by $F(Z)X=\ve \sum\nolimits_{i=1}^8 \<L_{e_0}e_i, X\> S_i Z + \pi_{E_1(S_0)}((A- \ve B_0)(X,Z))$,
for $Z \in E_{-1}(S_0), \; X \in E_1(S_0)$. It follows that $F=0$, so
$\pi_{E_1(S_0)}((A- \ve B_0)$ $(X,Z))=-\ve \sum\nolimits_{i=1}^8 \<e_i, L_{e_0}^tX\> S_i Z= -\ve S_{L_{e_0}^tX} Z$,
which proves the fourth identity of \eqref{eq:pi1AB}.

3. For a unit vector $w \in \Rd$, extend the operator $L_w$ from $w^\perp$ to $\Rd$ by linearity putting $L_ww=0$,
and then define $L_w: \Rd \to \Rs$, for all $w \ne 0$, by $L_w=L_{w/\|w\|}$. The identities \eqref{eq:pi1AB} then
hold for all $u, w \in \Rd, \; w \ne 0$, if we replace $E_1(S_w)$ by $E_{\|w\|}(S_w)\,(=E_1(S_{w/\|w\|})$.
Combining the first and the third identities of \eqref{eq:pi1AB} we obtain that for all $u, w \in \Rd, \; w \ne 0$,
and all $X, Y \in E_{\|w\|}(S_w)$,
\begin{equation*}
    \pi_{E_{\|w\|}(S_w)}B_u(Y,X)=\<L_wu, X\>Y-\<L_wu, Y\>X.
\end{equation*}
For every $u \in \Rd$, define the quadratic map $Q_u:\Rs \to \Rs$ by $\<Q_u(Y), X\>=\<B_u(Y,X),Y\>$. Taking the
inner product of the above equation with $Y \in E_{\|w\|}(S_w)$ and then integrating by $Y$ over the unit sphere
$\mathbb{S}(w) \subset E_{\|w\|}(S_w)$ we obtain $\<\int_{\mathbb{S}(w)}Q_u(Y)dY,X\>=\frac{7\omega_7}{8}\<L_wu, X\>$,
for all $X \in E_{\|w\|}(S_w)$, where $\omega_7$ is the volume of $\mathbb{S}(w)$. Relative to some orthonormal
basis $\{E_i\}$ for $\Rs$, the $i$-th component of $\int_{\mathbb{S}(w)}Q_u(Y)dY \in \Rs$ is
$\frac{\omega_7}{8}\Tr (\pi_{E_{\|w\|}(S_w)}Q_{u,i})$, where $Q_{u,i} \in \Sym(\Rs)$ is
the operator associated to the quadratic form $Y \to \<Q_u(Y),E_i\>$.
As $\pi_{E_{\|w\|}(S_w)}=\frac{1}{2}(\id+\|w\|^{-1}S_w)$, we obtain
$\int_{\mathbb{S}(w)}Q_u(Y)dY= \frac{7\omega_7}{8} (Cu + \|w\|^{-1}T(u,w))$, where a linear operator $C:\Rd \to \Rs$
and a bilinear map $T:\Rd \times \Rd \to \Rs$ are defined by $\<Cu,E_i\>=\frac1{14} \Tr Q_{u,i}$ and
$\<T(u,w),E_i\>=\frac1{14} \Tr (Q_{u,i}S_w)$, for $1 \le i \le 16$. It follows that
$\<Cu + \|w\|^{-1}T(u,w), X\>=\<L_wu, X\>$, for all $X \in E_{\|w\|}(S_w)$, which gives
\begin{equation*}
    \pi_{E_{\|w\|}(S_w)}(B_u(Y,X)-\<Cu + \|w\|^{-1}T(u,w), X\>Y+\<Cu + \|w\|^{-1}T(u,w), Y\>X)=0,
\end{equation*}
for all $X, Y \in E_{\|w\|}(S_w)$. This equation is satisfied, if we substitute
$B_u'(Y,X)=\<Cu, X\>Y-\<Cu, Y\>X+\sum_{i=0}^8(\<T(u,e_i),X\>S_iY-\<T(u,e_i),Y\>S_iX)$ for $B_u(Y,X)$ (this follows
from the fact that the sum on the right-hand side of $B_u'(Y,X)$ does not depend on the choice of an orthonormal basis
$\{e_i\}$ for $\Rd$, so we can take $e_0=\|w\|^{-1}w$; then $S_iX, S_iY \in E_{-\|w\|}(S_w)$ for $i \ne 0$,
by \eqref{eq:SiSj}). Therefore, for every $u \in \Rd$, the bilinear skew-symmetric map
$B''_u:\Rs \times \Rs \to \Rs$ defined by $B''_u=B_u-B_u'$ satisfies the hypothesis of assertion~5 of
Lemma~\ref{l:mA}.
It follows that for some $q:\Rd \to \Rs, \quad B''_u(Y,X)=(\mA-\id)(X \wedge Y)q(u)$. As the left-hand
side is linear in $u$, the map $q$ is a linear, $q(u)=C'u$, so for all $X, Y \in \Rs, \; u \in \Rd$,
\begin{equation}\label{eq:BuXY}
B_u(Y,X)=\sum\nolimits_{i=0}^8 S_i(X \wedge Y)(T(u,e_i)+S_iC'u)+(X \wedge Y)(C-C')u.
\end{equation}
From the first identity of \eqref{eq:pi1AB} it now follows that $\<T(u,u)+\|u\|Cu,X\>=0$, for all
$u \in \Rd, \; X \in E_{\|u\|}(S_u)$. As
$\pi_{E_{\|u\|}(S_u)}=\frac12 (\id+\|u\|^{-1}S_u)$, this implies $\|u\|(T(u,u)+S_uCu)+(S_uT(u,u)+\|u\|^2Cu)=0$. Since
$\|u\|$ is not a rational function, we obtain $T(u,u)+S_uCu=0$, for all $u \in \Rd$, so the bilinear map
$T':\Rd \times \Rd \to \Rs$ defined by $T'(u,w)=T(u,w)+S_wCu$ is skew-symmetric.

By the second identity of \eqref{eq:pi1AB}, the map $A(X,Y)$ satisfies the hypothesis of assertion~5 of
Lemma~\ref{l:mA}, so there exists $q \in \Rs$ such that for all $X, Y \in \Rs$,
\begin{equation}\label{eq:AXY}
    A(Y,X)=(\mA-\id)(X \wedge Y)q=\sum\nolimits_{i=0}^8 (\<S_i X,q\>S_i Y-\<S_i Y,q\>S_i X) - (\<X,q\> Y-\<Y,q\> X).
\end{equation}
Substituting (\ref{eq:BuXY}, \ref{eq:AXY}) into the fourth equation of \eqref{eq:pi1AB}, with $w=e_0$, we obtain
$\sum_{i=1}^8\<S_iq-\ve(T(e_0,e_i)+S_iC'e_0),Y\>S_iZ+\<2q+\ve(T(e_0,e_0)+(C-2C')e_0),Z\>Y=\ve S_{L^t_0Y}Z$, for all
$Y \in E_1(S_0), \, Z \in E_{-1}(S_0)$. Taking the inner product of the both sides with $S_kZ, \, k >0$, we get
$\<S_kq-\ve(T(e_0,e_k)+S_kC'e_0),Y\>\|Z\|^2+\<2q+\ve(T(e_0,e_0)+(C-2C')e_0),Z\>\<Y,S_kZ\>=\ve \<Y,L_0e_k\>\|Z\|^2$.
It follows that the second term on the right-hand side viewed as a polynomial of $Z \in E_{-1}(S_0)$
(with $Y\in E_1(S_0), \, k >0$ fixed) is divisible by $\|Z\|^2$. As this term is a product of two linear forms in $Z$,
we get $\<2q+\ve(T(e_0,e_0)+(C-2C')e_0),Z\>=0$. From the fact that $T(u,w)=T'(u,w)-S_wCu$, where $T'$ is
skew-symmetric, $\<q+\ve(C-C')e_0,Z\>=0$, for all $Z \in E_{-1}(S_0)$. As $e_0 \in \Rd$ is an arbitrary unit
vector, it follows that $\|u\|q+\ve(C-C')u \in E_{\|u\|}(S_u)$, for all $u \in \Rd$, so
$S_u(\|u\|q+\ve(C-C')u)=\|u\|(\|u\|q+\ve(C-C')u)$, which implies
$\|u\|(S_uq-\ve(C-C')u)=\|u\|^2q-\ve S_u(C-C')u$. Since $\|u\|$ is not a rational function, we obtain
$(C-C')u=\ve S_uq$, for all $u \in \Rd$, so $T(u,w)+S_wC'u=T'(u,w)-S_wCu+S_wC'u=T'(u,w)-\ve S_w S_uq=
T''(u,w)-\ve \<w,u\>q$, where $T''(u,w)=T'(u,w)-\ve (S_w S_u-\<w,u\>\id)q$ is skew-symmetric, as $T'$ is
skew-symmetric and by \eqref{eq:SiSj}. Substituting this to \eqref{eq:BuXY} we obtain
$B_u(Y,X)=\sum\nolimits_{i=0}^8 (\<T''(u,e_i)-\ve \<e_i,u\>q,X\>S_i Y-\<T''(u,e_i)-\ve \<e_i,u\>q, Y\>S_i X)
+\ve \<S_uq, X\>Y-\ve \<S_uq, Y\>X$. Substituting this expression and \eqref{eq:AXY} into \eqref{eq:op2bi} we get
after simplification: $\sigma_{XYZ}(2\sum_{i=0}^8 \<X,q\>S_i Y \wedge S_iZ-2\<X,q\> Y\wedge Z )=0$, so
$(\mA-\id)(\sigma_{XYZ}(\<X,q\> Y \wedge Z))=0$. From assertion~2 of Lemma~\ref{l:mA} it follows that
$\sigma_{XYZ}(\<X,q\> Y \wedge Z)=0$, for all $X,Y,Z \in \Rs$, which easily implies $q=0$. This proves the assertion
(with $T''$ denoted by $T$).

4. As it follows from \eqref{eq:abw} and assertion~\ref{it:2op2bi}, $\rho$ is a Codazzi tensor.
By \cite[Theorem~1]{DS}, for any two eigenspaces $E_\a, E_\b$ of $\rho$,
the exterior product $E_\a \wedge E_\b$ is an invariant subspace of the operator $R$ on the space of bivectors.
Suppose $\rho$ is not a multiple of the identity. As in the proof of Lemma~\ref{l:codazzi}, by linearity, it suffices
to show that the following assumption leads to a contradiction: there exist two orthogonal nontrivial invariant
subspaces $E_1, E_2$ of $\rho$ such that every $E_\a \wedge E_\b$ is an invariant subspace of $R$.
By \eqref{eq:op2ctc}, this is equivalent to the fact that every $E_\a \wedge E_\b$ is an invariant subspace of the
operator $\mA$, which is then equivalent to the fact that
\begin{equation}\label{eq:codazziop2}
\mA(X \wedge Y)Z=\sum\nolimits_{i=0}^8 (S_i X \wedge S_i Y) Z = 0,
\quad \text{for all } X, Y \in E_\a, \; Z \in E_\b, \quad \a \ne \b.
\end{equation}
Denote $\dim E_\a =d_\a > 0, \; \a=1,2$. By assertion~2 of Lemma~\ref{l:mA}, the space $\Sk(\Rs)$ is an invariant
subspace of $\mA$, and the restriction of $\mA$ to it is a symmetric operator, with eigenvalues $5$ and $-3$, whose
corresponding eigenspaces are $\mL_2$ and $\mL_3$. As every $E_\a \wedge E_\b$ is an invariant subspace of $\mA$, we
obtain that $E_\a \wedge E_\b= \oplus_{k=2,3} V_{\a\b k}$, where $V_{\a\b k}=\pi_{\mL_k} (E_\a \wedge E_\b)$. The six
subspaces $V_{\a\b k}, \; 1 \le \a \le \b \le 2$, $k=2,3$, are mutually orthogonal. Moreover, as
$\Sk(\Rs)=\oplus_{k=2,3} \mL_k = \oplus_{1 \le \a \le \b \le 2} E_\a \wedge E_\b$, we get
$\mL_k=\oplus_{1 \le \a \le \b \le 2} V_{\a\b k}$ (with all the direct sums above being orthogonal) and
$V_{\a\b k}= \pi_{E_\a \wedge E_\b} \mL_k =(E_\a \wedge E_\b) \cap \mL_k$, for all $1 \le \a \le \b \le 2, \; k=2,3$.

Every nonzero element $K \in V_{\a\a 2}$ has the form
$\sum_{i,j=0}^8 a_{ij}S_iS_j$, where $a_{ji}=-a_{ij}$. Moreover, as $V_{\a\a 2} \perp (E_\b \wedge \Rs),\; \b \ne \a$,
the kernel of every such $K$ contains $E_\b, \; \b \ne \a$. Choosing an orthonormal basis for $\Rd$, relative to
which the skew-symmetric matrix $a_{ij}$
has a canonical form, we get $K=\sum_{i=1}^4 b_i S_{2i-1}S_{2i}$, with $b_1 \ne 0$
(unless all the $b_i$'s are zeros). Then $X \in \Ker K$ if and only if $X=(\sum_{i=2}^4 c_i S_1S_2S_{2i-1}S_{2i})X$,
where $c_i=b_ib_1^{-1}, \; i=2,3,4$. Consider symmetric orthogonal operators
$D_i=S_1S_2S_{2i-1}S_{2i} \in \mL_4, \; i=2,3,4$. By \eqref{eq:SiSj}, $D_iD_j \in \mL_4, \; i \ne j$, and
$D_2 D_3 D_4 \in \mL_1$ (in fact, $D_2 D_3 D_4 = \pm S_0$). Then by assertion~1 of Lemma~\ref{l:mA},
$\Tr D_i =\Tr D_i D_j = \Tr D_2 D_3 D_4 =0, \; 2 \le i < j \le 4$. It follows that each of the symmetric orthogonal
operators $D_i, D_iD_j, D_2D_3D_4, \; 2 \le i < j \le 4$, has eigenvalues $\pm 1$, both of multiplicity $8$.
Furthermore, as the $D_i$'s pairwise commute (which again follows from \eqref{eq:SiSj}), we can choose an orthonormal
basis for $\Rs$ relative to which the matrices of the $D_i$'s are diagonal.
The $D_i$'s satisfy the above condition on the multiplicities of eigenvalues if and only if
the space $\Rs$ splits into the orthogonal sum of two-dimensional subspaces $W(\ve_2,\ve_3,\ve_4), \; \ve_i = \pm 1$,
such that $D_{i | W(\ve_2,\ve_3,\ve_4)}= \ve_i \id_{W(\ve_2,\ve_3,\ve_4)}$. From the above, $\Ker K$ is the
$+1$-eigenspace of the operator $c_2D_2+c_3D_3+c_4D_4$. Its eigenvalues are $\lambda=c_2\ve_2+c_3\ve_3+c_4\ve_4$,
with the corresponding eigenspaces $W_\lambda=\oplus W(\ve_2,\ve_3,\ve_4)$, where the sum is taken over the set
$s_\la=\{(\ve_2,\ve_3,\ve_4) \, : \, \lambda=c_2\ve_2+c_3\ve_3+c_4\ve_4\}$,
with $\dim V_\la= 2 \, \# s_\la$. Considering the equations $c_2\ve_2+c_3\ve_3+c_4\ve_4=1, \; \ve_i = \pm 1$, we see
that $\dim \Ker K$ can be equal to $0, 2, 4$, or $8$, and in the latter case (up to relabeling),
$c_2=\pm 1, \, c_2=c_3=0$, so $K$ is a nonzero multiple of $S_1S_2+c_2S_3S_4$ and $\Ker K$ is the $c_2$-eigenspace of
the symmetric operator $S_1S_2S_3S_4$. As
$\Ker K \supset E_\b, \; \dim\Ker K \ge d_\b$. It follows from $d_1+d_2=16$ that either one of the spaces $V_{\a\a 2}$
is trivial, or $d_1=d_2=8$ and $\Ker K = E_\b$, for all nonzero $K \in V_{\a\a 2}$ (both for $(\a,\b)=(1,2)$ and
$(\a,\b)=(2,1)$).

The first possibility leads to a contradiction. Indeed, suppose $V_{112}=0$. Then 
$\mL_2=V_{122}\oplus V_{222} \subset E_2 \wedge \Rs$ which implies that $\<KX,Y\>=0$, for all $K \in \mL_2$ and all
$X, Y \in E_1$, that is, $\<S_iX, S_jY\>=0$, for all $i,j=0, \dots, 8$. As for a nonzero $X$,
$\dim \Span_{i=0}^8(S_iX)=9$, 
it follows that $d_1=1$. Then for a nonzero $Z \in E_1$ we can choose $X \in E_2$ such that $Z \perp S_iX$. 
Substituting such $X, Z$ and an arbitrary $Y \in E_2$ into \eqref{eq:codazziop2} we find that $Z \perp S_iY$, for
all $Y \in E_2$. This implies that $E_2$ is an invariant subspace of all the operators $S_i$, hence $E_1$ also is. Then
$Z$ is an eigenvector of every $S_i$, which contradicts the fact that the operator $S_iS_j, \; i \ne j$, is orthogonal
and skew-symmetric.

Suppose now that $d_1=d_2=8$ and $\Ker K = E_\b$, for all nonzero $K \in V_{\a\a 2}$ (for $(\a,\b)=(1,2)$ and
$(\a,\b)=(2,1)$). Choose a nonzero $K \in V_{112}$. As it is shown above, under an appropriate choice of an
orthonormal basis for $\Rd, \; K =c(S_1S_2 +\ve S_3S_4)$ (for some $\ve = \pm1, \; c \ne 0$) and $E_2=\Ker K$ is
the $\ve$-eigenspace of $S_1S_2S_3S_4$. Then $E_1$ is the $(-\ve)$-eigenspace of $S_1S_2S_3S_4$.
As it follows from \eqref{eq:SiSj}, $S_1S_2S_3S_4S_i=\hat\ve_iS_iS_1S_2S_3S_4$, where $\hat\ve_i=-1$ for
$i=1, \dots, 4$ and $\hat\ve_i=1$ for $i=0,5, \dots, 8$. It follows that for any nonzero $X \in E_2, \; S_iX \in E_1$,
when $1\le i\le 4$ and $S_iX \in E_2$, otherwise. Moreover, as for any nonzero $X \in E_2, \; S_iS_jX =\pm S_kS_lX$,
where $\{i,j,k,l\}=\{1,2,3,4\}$,
the dimension of the space $\Span_{i,j=1}^4(S_iS_jX), \; X \in E_2$, is at most four, so there exists a nonzero
$Y \in E_2$ orthogonal to this subspace. Then $\Span_{i=1}^4(S_iX) \perp \Span_{i=1}^4(S_iY)$. Substituting such $X$
and $Y$ into \eqref{eq:codazziop2} and taking $Z=S_1X (\in E_1)$ we obtain $\|X\|^2 S_1Y=0$, which is a contradiction.

It follows that $\rho$ is a multiple of the identity, at every point $y \in \mU$. As $(\n_X \rho)Y=(\n_Y \rho)X$,
$\rho$ is in fact a constant multiple of the identity.

5. By assertion~1, there exists a linear map $N:\Rs \to \Sk(\Rs)$ such that
$\n_XS_w=[S_w,N_X]$. Then from \eqref{eq:abw} and assertion~\ref{it:2op2bi},
$[S_w,N_Y] X - [S_w,N_X]Y = \sum\nolimits_{i=0}^8 (\<T(w,e_i), X\> S_i Y - \<T(w,e_i), Y\> S_i X)$.
As from \eqref{eq:SiSj} $[S_w,\sum_{i,j=0}^8 \<T(e_i,e_j), X\> S_iS_j]=4 \sum_{i=0}^8 \<T(w,e_i), X\> S_i$,
for a linear map $N':\Rs \to \Sk(\Rs)$ defined by $N'_X=N_X+\frac14\sum_{i,j=0}^8 \<T(e_i,e_j), X\> S_iS_j$,
we obtain that $[S_w,N'_Y] X = [S_w,N'_X]Y$, for all $X, Y \in \Rs$. Taking a unit vector $w \in \Rd$ and
$X \in E_\ve(S_w), \; Y \in E_{-\ve}(S_w), \; \ve = \pm 1$, we get $[S_w,N'_Y] X =(S_w-\ve\id)N'_Y X =
-2\ve\pi_{E_{-\ve}(S_w)}N'_Y X$, and similarly, $[S_w,N'_X] Y =2\ve\pi_{E_{\ve}(S_w)}N'_X Y$, so
$\pi_{E_{-\ve}(S_w)}N'_Y X=\pi_{E_{\ve}(S_w)}N'_X Y$, hence $\<N'_Y X,Z\>=0$, for
all $X \in E_\ve(S_w), \; Y,Z \in E_{-\ve}(S_w)$. As $N'_Y$ depends linearly on $Y$ and is skew-symmetric, we obtain
that $\<N'_YX,Z\>=0$, for all $X \in E_1(S_w),\; Z\in E_{-1}(S_w)$ and all $Y \in \Rs$. It follows that
$E_1(S_w)$ and $E_{-1}(S_w)$ are invariant subspaces of $N'_Y$, so $N'_Y$ commutes with $S_w$, for any $w \in \Rd$
and any $Y \in \Rs$ (which, in fact, implies $N'=0$). Then
$\n_XS_w=[S_w,N_X]= [S_w,N'_X-\frac14\sum_{i,j=0}^8 \<T(e_i,e_j), X\> S_iS_j]=-\sum_{i=0}^8\<T(w,e_i), X\> S_i$, as
required.
\end{proof}


It follows from assertions~\ref{it:3op2bi} and \ref{it:4op2bi} of Lemma~\ref{l:op2bi} and \eqref{eq:op2nctc} that
after a conformal change of metric on
$\mU,\; (\n_ZR)(X,Y)=\ve \sum_{i,j=0}^8 (-\<T(e_i,e_j), Z\>S_jX \wedge S_i Y-\<T(e_i,e_j),Z\> S_iX \wedge S_j Y)=0$,
as $T$ is skew-symmetric. Hence $\mU$ is a locally symmetric space. Moreover, as $\rho$ is a constant multiple of the
identity by assertion~\ref{it:3op2bi} of Lemma~\ref{l:op2bi}, the curvature tensor \eqref{eq:op2ctc} is Osserman, so
$\mU$ is locally isometric to a rank-one symmetric space by \cite[Lemma~2.3]{GSV} (in fact, to the Cayley projective
plane or its noncompact dual, as these are the only two rank-one symmetric spaces of dimension $16$ the
Jacobi operator of whose curvature tensor has an eigenvalue of multiplicity exactly $8$).

Thus, for every $x \in M'$ satisfying assertion~\eqref{it:oc} of Lemma~\ref{l:locc1}, the metric on the neighborhood
$\mU=\mU(x)$ is conformally equivalent to the metric of a rank-one symmetric space.

\subsection{Proof of Theorem~\ref{t:co}}
\label{ss:glo}

Lemma~\ref{l:locc1} and the results of Sections~\ref{ss:cc} and \ref{ss:oc} imply the conformal part of
Theorem~\ref{t:co} at the generic points. Namely, every $x \in M'$ (the latter is an open, dense subset of $M^{16}$)
has a neighborhood $\mU$ which is conformally equivalent to a domain either of a Euclidean space, or of a
rank-one symmetric space, that is, of one of the model spaces
\begin{equation}\label{eq:models}
\Rs, \; \bc P^{8}, \; \bc H^{8}, \; \mathbb{H}P^{4}, \; \mathbb{H}H^{4}, \; \Oc P^2, \; \Oc H^2,
\end{equation}
where we normalize the standard metric $\tilde g$ on each of the non-flat spaces above in such a way that the
sectional curvature $K_\sigma$ satisfies $|K_\sigma| \in [1,4]$.

To prove the conformal part of Theorem~\ref{t:co}, we will show that, firstly, the same is true for any
$x \in M^{16}$, and secondly, that the model space to a domain of which $\mU$ is conformally equivalent is the same,
for all $x \in M^{16}$. Our proof very closely follows the arguments of \cite{Nco} from after Remark~4 to the end of
Section~3. We start with the following technical lemma:
\begin{lemma}\label{l:mmeps}
Let $(N^{16},\<\cdot,\cdot\>)$ be a smooth Riemannian space locally conformally equivalent to one of the
$\Oc P^2, \; \Oc H^2$, so that $\tilde g =f\<\cdot,\cdot\>$, for a positive smooth function
$f=e^{2\phi}: N^{16} \to \br$. Then the curvature tensor $R$ and the Weyl tensor $W$ of $(N^{16},\<\cdot,\cdot\>)$
satisfy (with $\ve=1$ for $\Oc P^2$ and $\ve=-1$ for $\Oc H^2$):
\begin{subequations}\label{eq:weylconfmodel}
\begin{align}
    R(X,Y)&=(X \wedge KY + KX \wedge Y) + \ve f (3X \wedge Y + P(X,Y)), \quad \text{where}\label{eq:model1} \\
    \notag
    P(X,Y)&=\mA(X \wedge Y)=\sum\nolimits_{i=0}^8 S_i X \wedge S_i Y, \quad
    K=H(\phi)-\n\phi \otimes \n\phi + \tfrac12 \|\nabla \phi\|^2 \id,\\
    W(X,Y)&=W_{\Oc,\ve}(X,Y)=\ve f (\tfrac{3}{5} X \wedge Y + P(X,Y)), \label{eq:model2} \\
    \|W\|^2&= \tfrac{32256}{5} f^2, \label{eq:model3} \\ 
    (\n_Z W)(X,Y)&=\ve Zf (\tfrac{3}{5} X \wedge Y + P(X,Y))  \label{eq:model4} \\
    \notag
    &+ \tfrac12 \ve([P(X,Y),\n f \wedge Z] + P((\n f \wedge Z)X,Y)+ P(X,(\n f \wedge Z)Y)),
\end{align}
\end{subequations}
where $H(\phi)$ is the symmetric operator associated to the Hessian of $\phi$, and both $\n$ and the norm are
computed relative to $\<\cdot,\cdot\>$.
\end{lemma}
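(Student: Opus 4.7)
The lemma is essentially a direct translation of the curvature tensor of the model space to the conformally related metric \(g = \langle\cdot,\cdot\rangle = f^{-1}\tilde g\), so my plan is a four-step calculation driven by the conformal change formulas already used in the proof of Lemma~\ref{l:locc1} together with the algebraic identities in Lemma~\ref{l:mA}.

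\emph{Equation \eqref{eq:model1}.} I start from the formula \(\tilde R(X,Y) = R(X,Y) - (X \wedge KY + KX \wedge Y)\) derived in the proof of Lemma~\ref{l:locc1}, with \(K\) as in the statement. Since \((N^{16}, \tilde g)\) is locally isometric to a rank-one symmetric space whose curvature is given by \eqref{eq:Rop2}, we have \(\tilde R(X,Y) = \ve(3 X \tilde\wedge Y + \sum_{i=0}^8 S_i X \tilde\wedge S_i Y)\), with \(\ve = 1\) for \(\Oc P^2\) and \(\ve = -1\) for \(\Oc H^2\). Using \(X \tilde\wedge Y = f(X \wedge Y)\), this becomes \(\tilde R(X,Y) = \ve f(3 X\wedge Y + P(X,Y))\), and solving for \(R\) yields \eqref{eq:model1}.

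\emph{Equation \eqref{eq:model2}.} The \((1,3)\)-Weyl tensor is conformally invariant, so \(W = \tilde W\); hence I would compute \(\tilde W\) directly in \(\tilde g\). Tracing in a \(\tilde g\)-orthonormal basis and using \(\Tr S_k = 0\) and \(\Tr(S_kS_l) = 16\K_{kl}\) (consequences of \eqref{eq:SiSj} and Lemma~\ref{l:mA}.1) gives \(\operatorname{Ric}(\tilde g) = 36\ve\,\id\), \(\operatorname{scal}(\tilde g) = 576\ve\), and hence the tensor \(\hat\rho\) from \eqref{eq:defWeyl} for \(\tilde g\) equals \(\tfrac{6\ve}{5}\id\). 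Substituting into \eqref{eq:defWeyl} yields \(\tilde W(X,Y) = \ve(\tfrac{3}{5}X \tilde\wedge Y + \sum S_i X \tilde\wedge S_i Y)\), and converting \(\tilde\wedge\) back to \(\wedge\) produces \eqref{eq:model2}.

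\emph{Equation \eqref{eq:model3}.} Expand \(\|W\|^2 = \sum_{i,j}\|W(e_i,e_j)\|^2_{\mathrm{op}}\) in a \(g\)-orthonormal basis and use the identity \(\langle A \wedge B, C \wedge D\rangle = 2(\<A,C\>\<B,D\> - \<A,D\>\<B,C\>)\) for the operator inner product on skew endomorphisms. The \(\|X\wedge Y\|^2\)-term sums to \(2n(n-1) = 480\), the cross term collapses via \(\Tr S_k = 0\) and \(\Tr(S_k^2) = 16\) to \(-288\), and the \(\|P(X,Y)\|^2\)-term reduces to \(\sum_{k,l}\Tr((S_lS_k)^2)\), which is computed from \(S_lS_kS_lS_k = 2\K_{kl}S_lS_k - \id\) (a one-step consequence of \eqref{eq:SiSj}). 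Summing the three pieces with the coefficients from \(\tfrac{3}{5}X\wedge Y + P(X,Y)\) gives exactly \(\|W\|^2 = \tfrac{32256}{5}f^2\).

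\emph{Equation \eqref{eq:model4}.} Choose the \(S_i\) on \(\mU\) to be \(\tilde\nabla\)-parallel (which is possible since \((N^{16},\tilde g)\) is locally symmetric and the subbundle \(\mL_1\) is \(\tilde\nabla\)-invariant). The connection difference is \(\tilde\nabla_X Y - \nabla_X Y = d\phi(X)Y + d\phi(Y)X - \<X,Y\>\n\phi\); a short calculation on a \((1,1)\)-tensor \(S\) then gives \((\tilde\nabla_Z - \nabla_Z)S = [\n\phi \wedge Z,\, S]\), so \(\n_Z S_i = [S_i,\, \n\phi \wedge Z] = \tfrac{1}{2f}[S_i,\, \n f \wedge Z]\) (using \(\n f = 2 f \n\phi\)). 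Differentiating \(W = \ve f(\tfrac{3}{5}X\wedge Y + P(X,Y))\) by the Leibniz rule, and then applying the elementary identity \([K_Z, A\wedge B] = (K_Z A) \wedge B + A \wedge (K_Z B)\) (valid for any skew \(K_Z\)) with \(K_Z = \n f \wedge Z\) to collapse the sum of commutators, reproduces \eqref{eq:model4}; the factor \(\tfrac{\ve}{2}\) arises from \(\ve f \cdot \tfrac{1}{2f}\). The hardest part will be the bookkeeping in Step~3 and the clean reorganization of commutators in Step~4, but both reduce to identities already contained in Lemma~\ref{l:mA} and \eqref{eq:SiSj}.
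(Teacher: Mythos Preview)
Your treatment of \eqref{eq:model1}--\eqref{eq:model3} is correct and matches the paper's approach; the explicit trace computations you outline for \eqref{eq:model2} and \eqref{eq:model3} are exactly what the paper means by ``follows from the definition of the Weyl tensor'' and ``can be computed directly''.

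There is, however, a genuine gap in your derivation of \eqref{eq:model4}. You propose to ``choose the $S_i$ on $\mU$ to be $\tilde\nabla$-parallel'', justifying this by local symmetry and $\tilde\nabla$-invariance of $\mL_1$. Invariance of the subbundle $\mL_1$ does \emph{not} imply the existence of a parallel frame: that would require the induced connection on $\mL_1$ to be flat. For $\Oc P^2$ the holonomy is $\mathrm{Spin}(9)$, which acts on $\mL_1\cong\Rd$ via the vector representation, so the curvature of the induced connection on $\mL_1$ is nonzero and no parallel choice of the individual $S_i$ exists, even locally. What the paper actually uses is assertion~\ref{it:4op2bi} of Lemma~\ref{l:op2bi}, which gives only $\tilde\nabla_Z S_i=\sum_j\omega_i^{\,j}(Z)S_j$ with $\omega_i^{\,j}+\omega_j^{\,i}=0$. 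The point is that these extra terms cancel in $\sum_i\big((\nabla_ZS_i)X\wedge S_iY+S_iX\wedge(\nabla_ZS_i)Y\big)$ by the skew-symmetry of $\omega$, leaving only the commutator contribution $[S_i,\nabla\phi\wedge Z]$ that you identified.

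An equally clean fix, closer in spirit to your argument, is to bypass the $S_i$ entirely: observe that the $(1,3)$-tensor $P$ itself is $\tilde\nabla$-parallel on the locally symmetric space (it is the curvature tensor minus a constant-curvature piece), and then compute $\nabla_ZP=-(\tilde\nabla_Z-\nabla_Z)P$ directly from the connection-change formula for a $(1,3)$-tensor. Either route yields \eqref{eq:model4}; your current justification does not.
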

\begin{proof}
By \eqref{eq:Rop2}, the curvature tensor of $\Oc P^2 \, (\ve=1)$ and $\Oc H^2 \, (\ve=-1)$ has
the form $\tilde R(X,Y)=\ve(3X \tilde \wedge Y + \sum_{i=0}^8 S_i X \tilde \wedge S_i Y)$,
where $(X \tilde\wedge Y)Z=\tilde g(X,Z)Y-\tilde g(Y,Z)X$. Under the conformal change of metric
$\tilde g =f\<\cdot,\cdot\>=e^{2\phi}\<\cdot,\cdot\>$, the curvature tensor transforms as
$\tilde R(X,Y)=R(X,Y)-(X \wedge KY + KX \wedge Y)$. As $\tilde g(X,Y) =f\<X,Y\>$, $X \tilde\wedge Y=f X \wedge Y$
and the $S_i$'s still satisfy \eqref{eq:SiSj} and are symmetric for $\<\cdot,\cdot\>$, equation
\eqref{eq:model1} follows.

Equation \eqref{eq:model2} follows from the definition of the Weyl tensor \eqref{eq:defWeyl}; the norm of $W$ in
\eqref{eq:model3} can be
computed directly using \eqref{eq:SiSj} and the fact that the $S_i$'s are symmetric, orthogonal and $\Tr S_i=0$.

Assertion~\ref{it:4op2bi} of Lemma~\ref{l:op2bi} is satisfied for $\tilde g$, so
$\tilde \n_Z S_i = \sum_{j=0}^8 \omega_{i}^{j}(Z) S_i$, where $\tilde \n$ is the Levi-Civita connection for $\tilde g$
and $\omega_{i}^{j}(Z)=-\<T(e_i,e_j), Z\>, \; \omega_{i}^{j}+\omega_{j}^{i}=0$. As
$\tilde \n_Z X = \n_Z X +Z\phi \, X + X\phi \, Z -\<X,Z\>\phi$, we get
$\n_Z S_i = \sum_{j=0}^8 \omega_{i}^{j}(Z) S_j + [S_i, \n \phi \wedge Z]$, so
$(\n_Z P)(X,Y)=[P(X,Y),\n \phi \wedge Z] + P((\n \phi \wedge Z)X,Y)+ P(X,(\n \phi \wedge Z)Y)$,
which, together with \eqref{eq:model2}, proves \eqref{eq:model4}.
\end{proof}

From Lemma~\ref{l:locc1}, the results of Sections~\ref{ss:cc} and \ref{ss:oc} and \cite[Theorem~3]{Nco},
for every point $x \in M'$, there exists a neighborhood $\mU=\mU(x)$ and a positive smooth function
$f: \mU \to \br$ such that the Riemannian space $(\mU, f \<\cdot,\cdot\>)$ is isometric to an open subset of one of
the model spaces of \eqref{eq:models}, so at every point $x \in M'$, the Weyl tensor $W$ of $M^{16}$ either vanishes,
or has the form $W_{\nu,\ve}$ given in \cite[Lemma~8, (36b)]{Nco}, with $n=16$ and the corresponding $\nu$, or
has the form $W_{\Oc,\ve}$ given in \eqref{eq:model2}. Here $W_{\nu,\ve}$ is the Weyl tensor of the corresponding
model space $M_{\nu,\ve}=\bc P^{8}, \; \bc H^{8}, \; \mathbb{H}P^{4}, \; \mathbb{H}H^{4}$, where $\ve=\pm 1$ is the
sign of the curvature and $\nu=1 (\nu=3)$ for complex (quaternionic) spaces, respectively.
The Jacobi operators associated to the different Weyl tensors $W_{\nu,\ve}$ of \cite[Eq.~(36b)]{Nco}
and $W_{\Oc,\ve}$ of \eqref{eq:model2} differ by the multiplicities and
the signs of the eigenvalues, so every point $x \in M'$ has a neighborhood conformally equivalent to a domain of
exactly one of the model spaces. Moreover, the function $f>0$ is well-defined when $W \ne 0$,
as $\|W\|^2 = C_{\nu n} f^2$ by \cite[Eq.~(36c)]{Nco} and $\|W\|^2= \tfrac{32256}{5} f^2$, by \eqref{eq:model3}.

By continuity, the Weyl tensor $W$ of $M^n$ either has the form $W_{\nu,\ve}$, or the form $W_{\Oc,\ve}$, or vanishes,
at every point $x \in M^n$ (as $M'$ is open and dense in $M^n$, see Lemma~\ref{l:locc1}). Moreover,
every point $x \in M^n$, at which the Weyl tensor has the form $W_{\nu,\ve}$ or $W_{\Oc,\ve}$, has a neighborhood, at
which the Weyl tensor has the same form. Hence $M^n=M_0 \sqcup \bigsqcup_\a M_\a$, where $M_0=\{x \, : \, W(x)=0\}$
is closed, and every $M_\a$ is a nonempty open connected subset, with $\partial M_\a \subset M_0$, such that the
Weyl tensor has the same form $W_{\nu,\ve}$ or $W_{\Oc,\ve}$ at every point $x \in M_\a$. In particular,
$M_\a \subset M'$, for every $\a$, so that each $M_\a$ is locally conformally equivalent to one of the nonflat model
spaces \eqref{eq:models}.

To prove the conformal part of Theorem~\ref{t:co}, we need to show that either $M = M_0$ or $M_0=\varnothing$. Suppose
that $M_0 \ne \varnothing$ and that there exists at least one component $M_\a$. If $M_\a$ is locally conformally
equivalent to one of the model spaces $M_{\nu,\ve}$, we get a contradiction following the arguments of \cite{Nco}
(from after Lemma~8 to the end of Section~3). Suppose $M_\a$ is locally conformally equivalent to one of
$\Oc P^2, \; \Oc H^2$.
Let $y \in \partial M_\a \subset M_0$ and let $B_\delta(y)$ be a small geodesic ball of $M$ centered at $y$ which is
strictly geodesically convex (any two points from $B(y)$ can be connected by a unique geodesic segment lying in
$B_\delta(y)$ and that segment realizes the distance between them). Let $x \in B_{\delta/3}(y) \cap M_\a$ and
let $r = \operatorname{dist}(x, M_0)$. Then the geodesic ball $B=B_r(x)$ lies in $M_\a$ and is strictly convex.
Moreover, $\partial B$ contains a point $x_0 \in M_0$. Replacing $x$ by the midpoint of the segment $[xx_0]$ and $r$
by $r/2$, if necessary, we can assume that all the points of $\partial B$, except for $x_0$, lie in $M_\a$.

The function $f$ is positive and smooth on $\overline{B} \setminus \{x_0\}$
(that is, on an open subset containing $\overline{B} \setminus \{x_0\}$, but not containing $x_0$).

\begin{lemma}\label{l:x_0}
When $x \to x_0,\; x \in B$, both $f$ and $\nabla f$ have a finite limit. Moreover, $\lim_{x\to x_0, x \in B}f(x)=0$.
\end{lemma}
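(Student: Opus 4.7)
My plan is to deduce $\lim f = 0$ directly from the identity $\|W\|^2 = \tfrac{32256}{5} f^2$ of equation~\eqref{eq:model3}, and to handle $\nabla f$ in two stages: first boundedness, by differentiating this same identity and applying Cauchy--Schwarz, and then upgrading to an actual finite limit by exploiting the Yamabe-type elliptic equation satisfied by $u=f^{7/2}$.

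Since the Weyl tensor $W$ is smooth on all of $M^{16}$ and $W(x_0)=0$ because $x_0\in M_0$, the nonnegative smooth scalar $\|W\|^2$ extends to $x_0$ with $\|W\|^2(x_0)=0$. As $f^2=\tfrac{5}{32256}\|W\|^2$ on $B$ by \eqref{eq:model3} and $f>0$ on $B$, $f$ extends continuously to $x_0$ with value zero. Differentiating this identity gives $f\,\nabla f = \tfrac{5}{32256}\,\langle W,\nabla W\rangle$ as $1$-forms on $B$, so by Cauchy--Schwarz together with $\|W\|=\sqrt{32256/5}\,f$,
\begin{equation*}
f\,|\nabla_Z f|\;\le\;\tfrac{5}{32256}\,\|W\|\,\|\nabla_Z W\|\;=\;\sqrt{\tfrac{5}{32256}}\,f\,\|\nabla_Z W\|
\end{equation*}
for every tangent vector $Z$. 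Dividing by $f>0$ and squaring and summing over an orthonormal frame yields $\|\nabla f\|\le\sqrt{5/32256}\,\|\nabla W\|$ on $B$; since $\nabla W$ is smooth on $M^{16}$ and hence bounded on the compact set $\overline{B}$, $\nabla f$ is bounded on $B$.

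To upgrade boundedness to a genuine finite limit, I would exploit the conformal Einstein structure of $(M_\a,\langle\cdot,\cdot\rangle)$. Comparing \eqref{eq:model1} with the Ricci decomposition \eqref{eq:defWeyl} of $R$ and using \eqref{eq:model2} identifies the endomorphism $K = H(\phi) - \nabla\phi\otimes\nabla\phi + \tfrac12\|\nabla\phi\|^2\,\id$ from Lemma~\ref{l:mmeps} with $\hat\rho - \tfrac65\varepsilon f\,\id$, which extends smoothly across $x_0$. Taking traces in this identity and using that $\widetilde{\operatorname{scal}}$ is a specific nonzero constant on $\Oc P^2$ and $\Oc H^2$, one deduces that $u=f^{7/2}$ satisfies a semilinear Yamabe-type equation $\Delta u + c_1(x)u + c_2 u^{9/7} = 0$ on $B$ with $c_1$ smooth on $\overline B$ and $c_2$ a nonzero constant. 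Schauder boundary regularity at the smooth geodesic sphere $\partial B$, with $u\in C^0(\overline{B})$ and $u(x_0)=0$, then yields $u\in C^{2,\alpha}$ up to $x_0$, so $\nabla u$ extends continuously there; combining this with $\nabla u = \tfrac72 f^{5/2}\nabla f$ and the boundedness of $\nabla f$ (which forces $\nabla u(x_0)=0$) gives the finite limit of $\nabla f$.

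The hard part will be the elliptic regularity step: since $u\mapsto u^{9/7}$ is only of class $C^{1,2/7}$ at $u=0$, the Schauder estimates must be applied with attention to H\"older exponents at the boundary. For the subsequent application to Hopf's boundary point theorem, however, the boundedness of $\nabla f$ alone is what is essential: it already yields $\|\nabla u\|\le\tfrac72 f^{5/2}\|\nabla f\|\to 0$ as $x\to x_0$, forcing the inner derivative of $u$ at $x_0$ to vanish and giving the desired contradiction.
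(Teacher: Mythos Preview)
Your argument for $\lim f=0$ matches the paper's exactly, and your Cauchy--Schwarz bound $|\nabla_Z f|\le\sqrt{5/32256}\,\|\nabla_Z W\|$ is a clean, correct proof that $\nabla f$ is bounded on $B$. You are also right that boundedness alone already forces $\|\nabla u\|=\tfrac72 f^{5/2}\|\nabla f\|\to 0$, which is all that is needed for the contradiction with the boundary point theorem; so for the downstream application your argument is complete and in fact more elementary than the paper's.

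Where your proposal falls short is the actual statement of the lemma, namely that $\nabla f$ has a \emph{finite limit}. Your Schauder-regularity plan for this is heavy and, as you acknowledge, delicate because $u\mapsto u^{9/7}$ is only $C^{1,2/7}$ at $u=0$; this step is not really carried out. The paper avoids elliptic boundary regularity entirely by extracting $\nabla f$ \emph{linearly} from the covariant derivative of the Weyl tensor. Using the explicit formula \eqref{eq:model4} for $\nabla W$, a direct computation shows that for unit $Y\perp\Span_{i=0}^8(S_iZ)$ the contraction $\theta(Y,Z)=\bigl\langle\sum_j(\nabla_{E_j}W)(E_j,Y)Y,\,Z\bigr\rangle$ equals $\tfrac{52}{5}\varepsilon\,Zf$ on $B$. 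Since $\nabla W$ is smooth on all of $M^{16}$, the left-hand side extends continuously to $x_0$, and hence so does $Zf$ for every continuous vector field $Z$. The one technical point is ensuring such a $Y$ can be chosen continuously up to $x_0$; the paper handles this by using the developing map of $(B,f\langle\cdot,\cdot\rangle)$ into $\Oc P^2$ or $\Oc H^2$ to show that the $9$-plane $\Span_i(S_i)$ converges in the Grassmannian $G(9,\Sym(T_{x_0}M))$. This is a more direct route to the full statement than your PDE approach, though your boundedness argument would make a perfectly good substitute if one only cares about the application.
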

\begin{proof}
The fact that $\lim_{x \to x_0, x \in B}f(x)=0$ follows from \eqref{eq:model3} and the fact that $W_{|x_0}=0$
(as $x_0 \in M_0$).

As the Riemannian space $(B, f\<\cdot,\cdot\>)$ is locally isometric to a rank-one symmetric space $M^{\Oc}_{\ve}$
(where $\ve= \pm 1$ and $M^{\Oc}_+=\Oc P^2, \; M^{\Oc}_-=\Oc H^2$) and
is simply connected, there exists a smooth isometric immersion $\iota:(B, f\<\cdot,\cdot\>) \to M^{\Oc}_{\ve}$. Since
$f$ is smooth on $\overline{B} \setminus \{x_0\}$ and $\lim_{x \to x_0, x \in B}f(x)=0$, the range of $\iota$ is
a bounded domain in $M^{\Oc}_{\ve}$. Moreover, as $\lim_{x \to x_0, x \in B}f(x)=0$, every sequence of points in $B$
converging to $x_0$ in the metric $\<\cdot,\cdot\>$ is a Cauchy sequence for the metric $f\<\cdot,\cdot\>)$. It
follows that there exists a limit $\lim_{x \to x_0, x \in B} \iota(x) \in M^{\Oc}_{\ve}$. Defining for every
$x \in B$ the point $\mS_{|x}=\Span_{i=0}^8(S_i)$ in the Grassmanian $G(9, \Sym(T_xM^{16}))$, we obtain
that there exists a limit $\lim_{x\to x_0, x \in B}\mS_{|x}=:\mS_{|x_0}\in G(9,\Sym(T_{x_0}M^{16}))$.
In particular, if $Z$ is a continuous vector field on $\overline{B}$, then there exists a unit continuous vector field
$Y$ on $\overline{B}$ such that $Y \perp \Span_{i=0}^8(S_i Z)$ on $B$. For such two vector fields, the function
$\theta(Y,Z)=\<\sum_{j=1}^{16} (\n_{E_j} W)(E_j,Y)Y,Z\>$ (where $E_j$ is an orthonormal frame on $\overline{B}$) is
well-defined and continuous on $\overline{B}$. Using (\ref{eq:SiSj}, \ref{eq:model4}) and assertion~4 of
Lemma~\ref{l:mA},  we obtain by a direct computation that at the points of $B$, $\theta(Y,Z)=\frac{52}{5}\ve Zf$. As
$\theta(Y,Z)$ is continuous on $\overline{B}$, there exists a limit $\lim_{x \to x_0, x \in B} Zf$. Since $Z$ is an
arbitrary continuous vector field on $\overline{B}$, $\n f$ has a finite limit when $x \to x_0, \; x \in B$.
\end{proof}


As $\lim_{x \to x_0, x \in B}f(x)=0$ and the $S_i$'s are orthogonal, the second term on the right-hand
side of \eqref{eq:model1} tends to $0$ when $x \to x_0$ in $B$.
Then the tensor field defined by $(X,Y) \to (X \wedge KY + KX \wedge Y)$ has a finite limit
(namely $R_{|x_0}$) when $x \to x_0$ in $B$. It follows that the symmetric operator $K$ has a finite limit at $x_0$.
Computing the trace of $K$ and using the fact that $\phi = \frac12 \ln f$ we get
\begin{equation*}
    \triangle u = F u , \quad \text{where} \; u=f^{7/2}, \; F =7 \Tr K
\end{equation*}
on $B$. Both functions $F$ and $u$ are smooth on $\overline{B} \setminus\{x_0\}$ and have a finite limit
at $x_0$. Moreover, $\lim_{x \to x_0, x \in B}u(x)=0$ by Lemma~\ref{l:x_0} and $u(x)>0$ for
$x \in \overline{B} \setminus\{x_0\}$. The domain $B$ is a small geodesic ball, so it satisfies
the inner sphere condition (the radii of curvature of the sphere $\partial B$ are uniformly bounded).
By the boundary point theorem \cite[Section~2.3]{Fra}, the inner directional derivative of $u$ at $x_0$
(which exists by Lemma~\ref{l:x_0}, if we define $u(x_0)=0$ by continuity) is positive. But
$\n u=\frac72 f^{5/2} \n f$ in $B$, so $\lim_{x \to x_0, x \in B} \n u=0$ by Lemma~\ref{l:x_0}, a contradiction.

This proves the conformal part of Theorem~\ref{t:co}.

The ``genuine" Osserman part, the Osserman Conjecture (assuming
Conjecture~A), now easily follows. Indeed, any Osserman manifold $M^n, \; n > 4$, is Einstein, hence by
\eqref{eq:defWeyl} its Weyl tensor is Osserman, hence $M^n$ is locally conformally equivalent to a rank-one symmetric
space or to a flat space, as shown above. Then by \cite[Theorem~4.4]{Nic},
$\n W =0$, so, as $M^n$ is Einstein, it is locally symmetric, and the proof follows from \cite[Lemma 2.3]{GSV}.

\end{document}